\date{}
\theoremstyle{plain}
\newtheorem{theorem}{Theorem}
\newtheorem{lemma}{Lemma}
\newtheorem{proposition}{Proposition}
\theoremstyle{definition}
\newtheorem{definition}{Definition}
\newtheorem{conjecture}{Conjecture}
\newtheorem{problem}{Open problem}
\theoremstyle{remark}
\tikzstyle{vertex}=[circle,fill=black!100,text=white,inner sep=0.8mm]
\tikzstyle{point}=[circle,fill=black,inner sep=0.1mm]
\DeclareMathOperator{\Free}{Free}
\DeclareMathOperator{\Sun}{Sun}
\begin{document}
\title{Critical properties of bipartite permutation graphs}
\author{Bogdan Alecu\thanks{Mathematics Institute, University of Warwick, Coventry, CV4 7AL, UK. Email: B.Alecu@warwick.ac.uk} \and
	Vadim Lozin\thanks{Mathematics Institute, University of Warwick, Coventry, CV4 7AL, UK. Email: V.Lozin@warwick.ac.uk} \and
	Dmitriy Malyshev\thanks{Laboratory of Algorithms and Technologies for Networks Analysis, National Research University
Higher School of Economics, 136 Rodionova Str., 603093, Nizhny Novgorod, Russia. Email: dsmalyshev@rambler.ru}$~^,$\thanks{The work of Malyshev D.S. 
was conducted within the framework of the Basic Research Program at the National Research University Higher School of Economics (HSE).}}
\maketitle

\begin{abstract}
The class of bipartite permutation graphs enjoys many nice and important properties.
In particular, this class is critically important in the study of clique- and rank-width of graphs, because it is one of the minimal hereditary classes
of graphs of unbounded clique- and rank-width. It also contains a number of important subclasses, which are critical with respect to
other parameters, such as graph lettericity or shrub-depth, and with respect to other notions, such as well-quasi-ordering or complexity of algorithmic problems.
In the present paper we identify critical subclasses of bipartite permutation graphs of various types.
\end{abstract}

{\it Keywords}: bipartite permutation graphs; well-quasi-ordering; universal graph

\section{Introduction}
A {\it graph property}, also known as a {\it class of graphs}, is an infinite set of graphs closed under isomorphism.
A property is {\it hereditary} if it is closed under taking induced subgraphs. The universe of hereditary properties is
rich and diverse, and it contains various classes of theoretical or practical importance, such as perfect graphs, interval graphs,
permutation graphs, bipartite graphs, planar graphs, threshold graphs, split graphs, graphs of bounded vertex degree, graphs of bounded clique-width, etc. 
It also contains all classes closed under taking subgraphs, minors, induced minors, vertex-minors, etc.

A class of particular interest in this paper is the class of \emph{bipartite permutation graphs}, i.e., graphs that are simultaneously
bipartite and permutation graphs. This class was introduced by Spinrad, Brandst\"adt, and Stewart \cite{BPG} in 1987
and since then it appears frequently in the mathematical and computer science literature. Partly, this is because bipartite permutation
graphs have a nice structure allowing solutions to many problems that are notoriously difficult for general graphs. For instance:
\begin{itemize}
\item the reconstruction conjecture, which is wide open in general, holds true for bipartite permutation graphs \cite{reconstruction};
\item many algorithmic problems that are generally NP-hard admit polynomial-time solutions when restricted to bipartite permutation graphs \cite{buffer,weed};
\item bipartite permutation graphs have bounded contiguity, a complexity measure, which is important in biological applications \cite{contiguity};
\item bipartite permutation graphs have a universal element of small order: there is a bipartite permutation graph with $n^2$ vertices containing all $n$-vertex bipartite permutation graphs as induced subgraphs.
\end{itemize}
On the other hand, in spite of the many attractive properties of bipartite permutation graphs, they represent a complex world. Indeed, in this world some algorithmic problems remain NP-hard,
for instance {\sc induced subgraph isomorphism}, and some parameters that measure complexity of the graphs take arbitrarily large values, for instance clique-width.
Moreover, the class of bipartite permutation graphs is critical with respect to clique-width in the sense that in every proper hereditary subclass of bipartite permutation graphs,
clique-width is bounded by a constant \cite{Loz11}. In other words, the class of bipartite permutation graphs is a minimal hereditary class of graphs of unbounded clique-width.
The same is true with respect to the notion of rank-width, because rank-width is bounded in a class of graphs if and only of clique-width is. Moreover, in the terminology of 
vertex-minors, bipartite permutation graphs constitute the {\it only} obstacle to bounding rank-width of bipartite graphs, because every bipartite graph of large rank-width 
contains a large universal bipartite permutation graph as a vertex-minor, see Corollary 3.9 in \cite{localcomp}.

This class, however, is not critical with respect to complexity of the  {\sc induced subgraph isomorphism} problem, because
the problem remains NP-hard when further restricting to the class of linear forests, a proper subclass of bipartite permutation graphs. One of the results of the present paper
is that the class of linear forests is a minimal hereditary class where the problem is NP-hard.

Is it always possible to find minimal ``difficult'' classes? In the universe of minor-closed classes of graphs the answer to this question is `yes', because graphs are well-quasi-ordered
under the minor relation \cite{minor-wqo}. In particular, in the family of minor-closed classes of graphs the planar graphs constitute a unique minimal class of unbounded tree-width \cite{planar}.
However, the induced subgraph relation is not a well-quasi-order, because it contains infinite antichains of graphs. As a result,
the universe of hereditary classes contains infinite strictly descending chains of classes. The intersection of all classes in such a chain is called a {\it limit class} and
a minimal limit class is called a {\it boundary class}.
Unfortunately, limit classes can be found even within bipartite permutation graphs. On the other hand, fortunately, there is only one boundary class in this universe, as we show
in the present paper. We also show that this unique boundary class is the only obstacle to finding minimal classes in the universe of bipartite permutation graphs.

In this paper, we study diverse problems and identify a variety of minimal ``difficult'' classes with respect to these problems.
In particular, in Section~\ref{sec:wqo} we identify the unique boundary subclass of bipartite permutation graphs.
Section~\ref{sec:parameters} is devoted to graph parameters and minimal classes where these parameters are unbounded.
In Section~\ref{sec:algo}, we  deal with algorithmic problems and prove that the linear forests constitute a minimal ``difficult'' classes for {\sc induced subgraph isomorphism}.
In Section~\ref{sec:uni}, we identify a minimal subclass of bipartite permutation graphs that do not admit a universal graph of linear order.
Various subclasses of bipartite permutation graphs that play an important role in this paper are presented in Section~\ref{sec:bpg}. 

All graphs in this paper are \emph{simple}, i.e., finite, undirected, without loops and without multiple edges. The vertex set and the edge set of a graph $G$ are denoted by $V(G)$ and $E(G)$, respectively.

As usual, $P_n,C_n,K_n$ denote the chordless path, the chordless cycle, and the complete graph with $n$ vertices, respectively.
Also, $K_{n,m}$ is the complete bipartite graphs with parts of size $n$ and $m$. By $S_{i,j,k}$, we denote the graph represented in Figure~\ref{fig:S}.

\begin{figure}[ht]
\begin{center} \begin{picture}(240,80)
\put(110,15){\circle*{3}}
\put(110,26){\circle*{3}}
\put(110,37){\circle*{3}}
\put(110,55){\circle*{3}}
\put(110,66){\circle*{3}}
\put(110,42){\circle*{1}}
\put(110,46){\circle*{1}}
\put(110,50){\circle*{1}}
\put(110,15){\line(0,1){11}}
\put(110,26){\line(0,1){11}}
\put(110,55){\line(0,1){11}}
\put(100,10){\circle*{3}}
\put(90,5){\circle*{3}}
\put(70,-5){\circle*{3}}
\put(60,-10){\circle*{3}}
\put(85,2){\circle*{1}}
\put(80,0){\circle*{1}}
\put(75,-2){\circle*{1}}
\put(110,15){\line(-2,-1){10}}
\put(100,10){\line(-2,-1){10}}
\put(70,-5){\line(-2,-1){10}}
\put(120,10){\circle*{3}}
\put(130,5){\circle*{3}}
\put(150,-5){\circle*{3}}
\put(160,-10){\circle*{3}}
\put(135,2){\circle*{1}}
\put(140,0){\circle*{1}}
\put(145,-2){\circle*{1}}
\put(110,15){\line(2,-1){10}}
\put(120,10){\line(2,-1){10}}
\put(150,-5){\line(2,-1){10}}
\put(113,26){$_1$}
\put(113,37){$_2$}
\put(113,55){$_{i-1}$}
\put(113,66){$_i$}
\put(98,3){$_1$}
\put(88,-2){$_2$}
\put(68,-12){$_{j-1}$}
\put(58,-17){$_j$}
\put(120,15){$_1$}
\put(130,10){$_2$}
\put(148,1){$_{k-1}$}
\put(162,-7){$_k$}
\end{picture}
\end{center}
\caption{The graph $S_{i,j,k}$ }
\label{fig:S}
\end{figure}
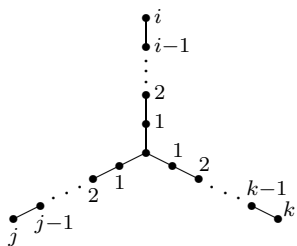

The disjoint union of two graphs $G_1$ and $G_2$ 
will be denoted by $G_1+G_2$,
and the disjoint union of $k$ copies of $G$ will be denoted by $kG$.

An {\it induced subgraph} of a graph $G$ is a subgraph obtained from $G$ by a (possibly empty) sequence of vertex deletions.
We say that $G$ contains a graph $H$ as an induced subgraph if $H$ is isomorphic to an induced subgraph of $G$.
Otherwise, we say that  $G$ is \emph{$H$-free} 
and that $H$ is a forbidden induced subgraph for $G$. 

It is well-known (and not difficult to see) that a class of graphs is hereditary if and only 
if it can be characterized by means of minimal forbidden induced subgraphs, i.e., graphs that do not belong to the class and which are minimal with this property (with respect to the induced subgraph relation).
The class containing no induced subgraphs from a set $M$ will be denoted $\Free(M)$, and given graphs $G_1, G_2, \dots$, we will write $\Free(G_1, G_2, \dots)$ to mean $\Free(\{G_1, G_2, \dots\})$.

\section{Bipartite permutation graphs and their subclasses}
\label{sec:bpg}
Throughout the paper we denote the class of all bipartite permutation graphs by $\cal BP$. 
This class was introduced in \cite{BPG} and also appeared in the literature under various other names such as 
proper interval bigraphs \cite{Pavol}, monotone graphs \cite{Haiko}, Parikh word representable graphs \cite{Parikh}.
This class admits various characterizations, many of which can be found in \cite{BPG}. 
For the purpose of the present paper, we are interested in the induced subgraph characterization and the structure of 
a `typical' graph in this class.

In terms of forbidden induced subgraphs the class of bipartite permutation graphs is precisely the class of
$$
(S_{2,2,2},\Sun_3,\Phi,C_3,C_5,C_6,C_7,\ldots)\mbox{-free graphs},
$$
where $\Sun_3$ and $\Phi$ are the graphs represented in Figure~\ref{fig:Sun}.
\begin{figure}[ht]
\begin{center}
\begin{picture}(130,50)
\setlength{\unitlength}{0.5mm}
\put(30,17){\circle*{3}}
\put(40,17){\circle*{3}}
\put(50,-3){\circle*{3}}
\put(50,7){\circle*{3}}
\put(50,27){\circle*{3}}
\put(60,17){\circle*{3}}
\put(70,17){\circle*{3}}
\put(40,17){\line(-1,0){10}}
\put(40,17){\line(1,1){10}}
\put(40,17){\line(1,-1){10}}
\put(60,17){\line(1,0){10}}
\put(60,17){\line(-1,1){10}}
\put(60,17){\line(-1,-1){10}}
\put(50,-3){\line(0,1){10}}
\end{picture}
\begin{picture}(130,50)
\setlength{\unitlength}{0.5mm}
\put(60,30){\circle*{3}}
\put(60,10){\circle*{3}}
\put(80,30){\circle*{3}}
\put(80,10){\circle*{3}}
\put(100,30){\circle*{3}}
\put(100,10){\circle*{3}}
\put(80,-10){\circle*{3}}

\put(60,30){\line(1,0){20}}
\put(100,30){\line(-1,0){20}}
\put(60,30){\line(0,-1){20}}
\put(100,10){\line(0,1){20}}
\put(80,10){\line(0,-1){20}}
\put(80,10){\line(0,1){20}}
\put(100,10){\line(-1,0){20}}
\put(60,10){\line(1,0){20}}
\end{picture}
\end{center}
\caption{Graphs $\Sun_3$ (left) and $\Phi$ (right)}
\label{fig:Sun}
\end{figure}
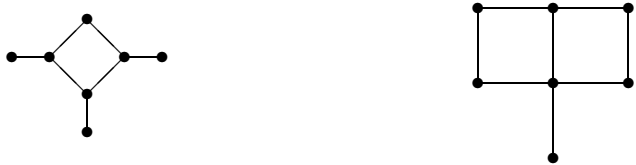

A `typical' bipartite permutation graph is represented in Figure~\ref{fig:H55}. 
We emphasize that this figure contains two representations of the {\it same} graph. 
In most of our considerations the square representation is more preferable and we 
denote a graph of this form with $n$ columns and $n$ rows by $H_{n,n}$.
The graph $H_{n,n}$ is typical in the sense that it contains all $n$-vertex bipartite permutation graphs as induced subgraphs, i.e., this is an $n$-universal 
bipartite permutation graph, which was shown in \cite{universal}.

\begin{figure}[ht]
\begin{center}
\begin{picture}(200,100)
\setlength{\unitlength}{0.2mm}
\put(0,0){\circle*{5}}

\put(50,0){\circle*{5}}
\put(100,0){\circle*{5}}
\put(150,0){\circle*{5}}
\put(200,0){\circle*{5}}
\put(250,0){\circle*{5}}

\put(100,50){\circle*{5}}
\put(150,50){\circle*{5}}
\put(200,50){\circle*{5}}
\put(250,50){\circle*{5}}
\put(50,50){\circle*{5}}
\put(0,50){\circle*{5}}

\put(300,50){\circle*{5}}
\put(300,0){\circle*{5}}

\put(350,50){\circle*{5}}
\put(350,0){\circle*{5}}

\put(200,0){\line(0,1){50}}
\put(200,0){\line(-1,1){50}}
\put(200,0){\line(-2,1){100}}
\put(200,0){\line(-3,1){150}}
\put(200,0){\line(-4,1){200}}

\put(150,0){\line(0,1){50}}
\put(150,0){\line(-1,1){50}}
\put(150,0){\line(-2,1){100}}
\put(150,0){\line(-3,1){150}}

\put(100,0){\line(0,1){50}}
\put(100,0){\line(-1,1){50}}
\put(100,0){\line(-2,1){100}}

\put(50,0){\line(0,1){50}}
\put(50,0){\line(-1,1){50}}
\put(0,0){\line(0,1){50}}

\put(250,0){\line(0,1){50}}
\put(250,0){\line(-1,1){50}}
\put(250,0){\line(-2,1){100}}
\put(250,0){\line(-3,1){150}}
\put(250,0){\line(-4,1){200}}

\put(300,0){\line(0,1){50}}
\put(300,0){\line(-1,1){50}}
\put(300,0){\line(-2,1){100}}
\put(300,0){\line(-3,1){150}}
\put(300,0){\line(-4,1){200}}
\put(350,0){\line(0,1){50}}
\put(350,0){\line(-1,1){50}}
\put(350,0){\line(-2,1){100}}
\put(350,0){\line(-3,1){150}}
\put(350,0){\line(-4,1){200}}
\end{picture}
\begin{picture}(150,100)
\setlength{\unitlength}{0.2mm}
\put(50,0){\circle*{5}}
\put(100,0){\circle*{5}}
\put(150,0){\circle*{5}}
\put(200,0){\circle*{5}}

\put(100,50){\circle*{5}}
\put(150,50){\circle*{5}}
\put(200,50){\circle*{5}}
\put(50,50){\circle*{5}}

\put(50,100){\circle*{5}}
\put(100,100){\circle*{5}}
\put(150,100){\circle*{5}}
\put(200,100){\circle*{5}}
\put(50,150){\circle*{5}}
\put(100,150){\circle*{5}}
\put(150,150){\circle*{5}}
\put(200,150){\circle*{5}}
\put(200,0){\line(0,1){50}}
\put(200,0){\line(-1,1){50}}
\put(200,0){\line(-2,1){100}}
\put(200,0){\line(-3,1){150}}

\put(150,0){\line(0,1){50}}
\put(150,0){\line(-1,1){50}}
\put(150,0){\line(-2,1){100}}

\put(100,0){\line(0,1){50}}
\put(100,0){\line(-1,1){50}}

\put(50,0){\line(0,1){50}}

\put(200,50){\line(0,1){50}}
\put(200,50){\line(-1,1){50}}
\put(200,50){\line(-2,1){100}}
\put(200,50){\line(-3,1){150}}

\put(150,50){\line(0,1){50}}
\put(150,50){\line(-1,1){50}}
\put(150,50){\line(-2,1){100}}

\put(100,50){\line(0,1){50}}
\put(100,50){\line(-1,1){50}}

\put(50,50){\line(0,1){50}}

\put(200,100){\line(0,1){50}}
\put(200,100){\line(-1,1){50}}
\put(200,100){\line(-2,1){100}}
\put(200,100){\line(-3,1){150}}

\put(150,100){\line(0,1){50}}
\put(150,100){\line(-1,1){50}}
\put(150,100){\line(-2,1){100}}

\put(100,100){\line(0,1){50}}
\put(100,100){\line(-1,1){50}}

\put(50,100){\line(0,1){50}}

\end{picture}
\end{center}
\caption{Universal bipartite permutation graph $H_{n,n}$ for $n=4$.}
\label{fig:H55}
\end{figure}
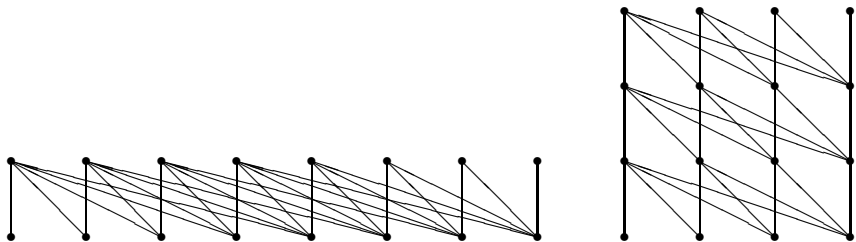

Below, we list a number of subclasses of bipartite permutation graphs that play an important role in this paper.
\begin{itemize}
\item {\it Chain graphs} are bipartite graphs for which the vertices in each part are linearly ordered under inclusion of their neighbourhoods.
These are precisely $2K_2$-free bipartite graphs.
\item {\it Graphs of vertex degree at most $1$} are graphs in which every connected component is either $K_2$ or $K_1$. Alternatively, they can be
described as $(P_3,K_3)$-free graphs.
\end{itemize}
As we shall see in Section~\ref{sec:parameters}, both these classes are critical with respect to a parameter known as neighbourhood diversity.
Graphs of degree at most $1$ admit two important extensions.
\begin{itemize}
\item {\it Linear forests}, also known as {\it path forests}, are graphs in which every connected component is a path $P_k$, for some $k$. These are
precisely $(K_{1,3},C_3,C_4,C_5,\ldots)$-free graphs, or alternatively, $(K_{1,3},C_4)$-free bipartite permutation graphs.
\item {\it Star forests} are graphs in which every connected component is a star $K_{1,p}$, for some $p$. In the terminology of forbidden induced subgraphs
this class can be described as the class of  $(P_4,C_4)$-free bipartite permutation graphs.
\end{itemize}
Both linear forests and star forests are special cases of caterpillar forests.
\begin{itemize}
\item {\it Caterpillar forests} are graphs in which every connected component is a caterpillar, i.e., a tree containing a dominating path.
In terms of forbidden induced subgraphs caterpillar forests can be described as the class of  $C_4$-free bipartite permutation graphs.
\end{itemize}
An interesting class between star forests and bipartite permutation graphs is the class of $P_5$-free graphs: these are graphs in which every connected
component is a chain graph. The inclusion relationships between the above listed classes is represented in Figure~\ref{fig:inclusion}.
\begin{figure}
\begin{center}
\begin{picture}(400,150)
\put(200,130)
{\oval(300,20)
\makebox(0,0)
{Bipartite permutation graphs}}
\put(110,90){\oval(100,20)
\makebox(0,0)
{$P_5$-free bipartite}}
\put(275,90){\oval(150,20)
\makebox(0,0)
{Caterpillar forests}}
\put(110,10){\oval(100,20)
\makebox(0,0)
{Chain graphs}}
\put(185,50){\oval(100,20)
\makebox(0,0)
{Star forests}}
\put(300,50){\oval(100,20)
\makebox(0,0)
{Linear forests}}
\put(250,10){\oval(104,20)
\makebox(0,0)
{Graphs of degree $\le 1$}}
\put(110,120){\line(0,-1){20}}
\put(110,80){\line(0,-1){60}}
\put(275,120){\line(0,-1){20}}
\put(150,80){\line(0,-1){20}}
\put(220,80){\line(0,-1){20}}
\put(275,80){\line(0,-1){20}}
\put(220,40){\line(0,-1){20}}
\put(275,40){\line(0,-1){20}}
\end{picture}
\end{center}
\caption{Inclusion relationships between subclasses of bipartite permutation graphs}
\label{fig:inclusion}
\end{figure}
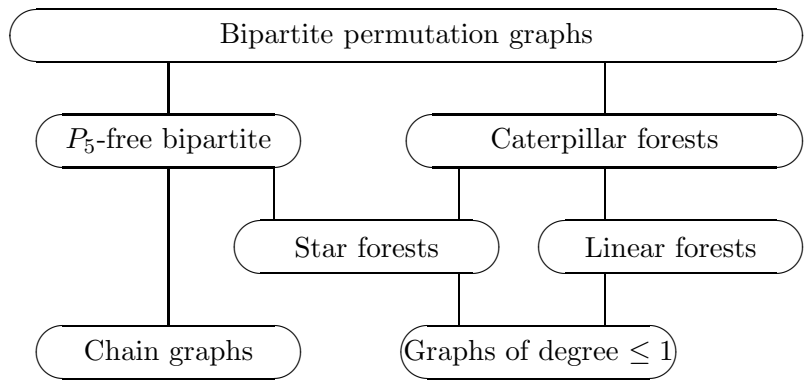

\section{Well-quasi-ordering and boundary classes}
\label{sec:wqo}

In this section, we look for critical classes of bipartite permutation graphs with respect to the notion of well-quasi-ordering by induced subgraphs.
We start with basic definitions.

A binary relation $\le$ on a set $W$ is a {\it quasi-order} (also known as {\it preorder})
if it is reflexive and transitive. Two elements $x,y \in W$ are said to be {\it comparable}
with respect to $\le$ if either $x\le y$ or $y\le x$. Otherwise, $x$ and $y$ are {\it incomparable}.
A set of pairwise comparable elements is called a {\it chain}, and a set of pairwise incomparable elements
is an {\it antichain}. If $x\le y$ and $y\not\le x$, we write $x<y$. A chain $x_1> x_2> \ldots$ is called {\it strictly descending}.
A quasi-order $(W,\le)$ is a {\it well-quasi-order} (``wqo'', for short) if it contains neither infinite strictly descending chains nor infinite antichains.

The celebrated result of Robertson and Seymour \cite{minor-wqo} states that the set of all simple graphs is well-quasi-ordered with respect to the minor relation.
However, the induced subgraph relation is not a wqo, as the cycles create an infinite antichain with respect to this relation.
This example does not apply to the class of bipartite permutation graphs, since all chordless cycles are forbidden in this class, except for $C_4$.
Nonetheless, graphs in this class are not well-quasi-ordered under induced subgraphs, because they contain the infinite antichain of \emph{$H$-graphs},
i.e., graphs of the form $H_k$ represented in Figure~\ref{fig:H}.

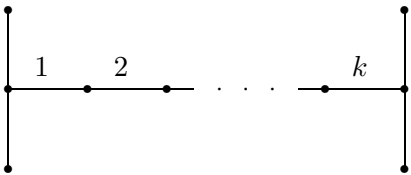
\begin{figure}[ht]
\begin{center}
\begin{picture}(250,85)
\put(50,40){\circle*{3}} \put(80,40){\circle*{3}} \put(110,40){\circle*{3}}
\put(130,40){\circle*{1}} \put(140,40){\circle*{1}} \put(150,40){\circle*{1}}
\put(170,40){\circle*{3}} \put(200,40){\circle*{3}} \put(50,70){\circle*{3}}
\put(50,10){\circle*{3}} \put(200,70){\circle*{3}} \put(200,10){\circle*{3}}
\put(50,40){\line(1,0){30}} \put(80,40){\line(1,0){30}} \put(110,40){\line(1,0){10}}
\put(160,40){\line(1,0){10}} \put(170,40){\line(1,0){30}}
\put(50,40){\line(0,1){30}} \put(50,40){\line(0,-1){30}}
\put(200,40){\line(0,1){30}} \put(200,40){\line(0,-1){30}} \put(60,45){1}
\put(90,45){2} \put(180,45){$k$}
\end{picture}
\end{center}
\caption{The graph $H_k$}
\label{fig:H}
\end{figure}

On the other hand, if we restrict ourselves to the class of chain graphs, we find ourselves in the well-quasi-ordered world,
because chain graphs have lettericity at most $2$, as we shall see in Section~\ref{sec:lettericity},
and graphs of bounded lettericity are known to be well-quasi-ordered by induced subgraphs \cite{Pet02}. Unfortunately, the boundary
separating wqo classes from non-wqo ones cannot be described in the terminology of minimal non-wqo
classes, because such classes do not exist. Indeed, if $\cal X$ is a non-wqo class, then it contains an infinite antichain of graphs.
Excluding these graphs one by one, we obtain an infinite strictly descending sequence of subclasses of $\cal X$, none of which is wqo.

To overcome this difficulty, we employ the notion of boundary classes, which can be viewed as a relaxation of the notion of minimal classes.
This notion was introduced in \cite{Ale03} to study the maximum independent set
problem in hereditary classes. Later, this notion was applied to some other graph
problems of both algorithmic \cite{AKL04,ABKL07,KLMT11,M13,M14,M16,MP16} and combinatorial \cite{boundary-wqo,Loz08} nature.
In particular, in \cite{boundary-wqo} it was applied to the study of well-quasi-ordered classes and was defined as follows.

To simplify the discussion, we use the term {\it bad} to refer to classes of graphs
that are not well-quasi-ordered by the induced subgraph relation and the term {\it good}
to refer to those classes that are well-quasi-ordered.

\begin{definition}
We say that $\cal X$ is a {\it limit class} if $\cal X$ is the intersection of any sequence
${\cal X}_1\supseteq {\cal X}_2\supseteq {\cal X}_3\supseteq\ldots$ of bad classes.
\end{definition}

In \cite{boundary-wqo}, it was shown that every bad class contains a minimal limit class.

\begin{definition}
A minimal limit class is called a {\it boundary class}.
\end{definition}

The importance of this notion is due to the following theorem, also proved in \cite{boundary-wqo}.

\begin{theorem}\label{thm:finite}
A  class of graphs defined by finitely many forbidden induced subgraphs is good if and only if it contains no boundary classes.
\end{theorem}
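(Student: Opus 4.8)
The plan is to prove the biconditional through its two contrapositives, isolating the fact that only one of the two directions needs $M$ to be finite. Write $\mathcal{X}=\Free(M)$. For the direction ``if $\mathcal{X}$ contains no boundary class then $\mathcal{X}$ is good'' I would argue the contrapositive: if $\mathcal{X}$ is bad, then it contains a boundary class. But this is precisely the result of \cite{boundary-wqo} quoted above --- every bad class contains a minimal limit class --- applied to $\mathcal{X}$ itself; so this half is a one-line appeal and uses nothing at all about $M$ (it holds for arbitrary hereditary classes).

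For the other direction, ``if $\mathcal{X}$ is good then $\mathcal{X}$ contains no boundary class'', I again pass to the contrapositive: assume $M=\{H_1,\dots,H_k\}$ is finite and that $\mathcal{X}$ contains a boundary class $\mathcal{Y}$, and deduce that $\mathcal{X}$ is bad. Since a boundary class is in particular a limit class, fix a descending chain of bad classes $\mathcal{X}_1\supseteq\mathcal{X}_2\supseteq\cdots$ with $\bigcap_i\mathcal{X}_i=\mathcal{Y}$. The heart of the matter is the claim that $\mathcal{X}_i\subseteq\mathcal{X}$ for some $i$. Suppose not: for each $i$ pick $G_i\in\mathcal{X}_i\setminus\mathcal{X}$, so that $G_i$ contains some member of $M$ as an induced subgraph; since $M$ is finite, some fixed $H\in M$ occurs as an induced subgraph of $G_i$ for all $i$ in an infinite index set $I$. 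Heredity of the $\mathcal{X}_i$ gives $H\in\mathcal{X}_i$ for every $i\in I$, and since $I$ is unbounded and the chain descending, $H\in\mathcal{X}_n$ for all $n$, whence $H\in\bigcap_i\mathcal{X}_i=\mathcal{Y}\subseteq\mathcal{X}=\Free(M)$, contradicting $H\in M$. Fixing $i$ with $\mathcal{X}_i\subseteq\mathcal{X}$, and recalling that under the induced subgraph order on finite graphs ``bad'' simply means ``contains an infinite antichain'' (there can be no infinite strictly descending chains, since a graph has only finitely many induced subgraphs), the antichain witnessing that $\mathcal{X}_i$ is bad also lies in $\mathcal{X}$, so $\mathcal{X}$ is bad.

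I expect the only step needing genuine thought to be the claim $\mathcal{X}_i\subseteq\mathcal{X}$ in the second direction: the observation that if no $\mathcal{X}_i$ were $M$-free, then one forbidden subgraph would have to persist all the way down the chain and hence belong to the limit class $\mathcal{Y}$, which is impossible because $\mathcal{Y}\subseteq\Free(M)$. This is exactly where finiteness of $M$ enters, via a pigeonhole argument. Everything else is bookkeeping: the first direction is a direct citation, and passing from a bad subclass to a bad superclass is immediate once badness is identified with the existence of an infinite antichain.
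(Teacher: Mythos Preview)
Your argument is correct. Note, however, that the paper does not actually supply its own proof of this theorem: it is stated with the remark ``also proved in \cite{boundary-wqo}'' and then used as a black box. So there is no in-paper proof to compare against.

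That said, the argument you give is the standard one and matches what appears in \cite{boundary-wqo}. The first direction is, as you say, an immediate consequence of the cited fact that every bad class contains a boundary class. For the second direction, your pigeonhole step is exactly the point where finiteness of $M$ is used: if no $\mathcal{X}_i$ were contained in $\mathcal{X}=\Free(M)$, then some fixed $H\in M$ would survive into $\bigcap_i\mathcal{X}_i=\mathcal{Y}\subseteq\mathcal{X}$, a contradiction. One minor simplification: once you have $\mathcal{X}_i\subseteq\mathcal{X}$ with $\mathcal{X}_i$ bad, you do not need to invoke the well-foundedness of the induced subgraph order to conclude that $\mathcal{X}$ is bad; it is enough to observe that any hereditary superclass of a non-wqo class is itself non-wqo.
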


One of the boundary classes identified in \cite{boundary-wqo} is the class of linear forests.
It is the limit of the sequence $\Free(K_{1,3},C_3,\ldots,C_k)$ with $k$ tending to infinity.
Each class of this sequence is bad, since each of them contains infinitely many cycles.
However, the class of linear forests is not boundary in the universe of bipartite permutation graphs,
since none  of the classes $\Free(K_{1,3},C_3,\ldots,C_k)$ belongs to this universe.

In order to identify a boundary class $\cal X$ in the universe of  bipartite permutation graphs,
we need to construct a sequence of bad subclasses of bipartite permutation graphs converging to $\cal X$
and to show that $\cal X$ is a minimal limit class. To this end, we consider the following sequence:
$$
{\cal BP}\cap \Free(C_4,K_{1,4},S_{1,2,2},3K_{1,3},H_1,\ldots,H_k).
$$

We denote the limit class of this sequence with $k$ tending to infinity by $\cal L$.

\begin{lemma}\label{lem:X-structure}
$\cal L$ is the class of graphs, in which every connected component is a path, except possibly for two components of the form $S_{1,1,k}$ for some $k$.
\end{lemma}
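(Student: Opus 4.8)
The plan is to prove the two inclusions separately, the one that every $G\in{\cal L}$ has the stated form being the substantial direction. Since ${\cal L}$ is the limit of a descending chain, it equals the intersection of its terms, so a graph lies in ${\cal L}$ exactly when it is a bipartite permutation graph that is simultaneously $C_4$-free, $K_{1,4}$-free, $S_{1,2,2}$-free, $3K_{1,3}$-free and $H_k$-free for every $k\ge 1$. For the easy inclusion, suppose every connected component of $G$ is a path, with at most two exceptions each isomorphic to some $S_{1,1,k}$. Then $G$ is a caterpillar forest (paths and the spiders $S_{1,1,k}$ are caterpillars), hence by the characterisation of caterpillar forests in Section~\ref{sec:bpg} a $C_4$-free bipartite permutation graph; it has maximum degree at most $3$, so it is $K_{1,4}$-free; its only vertices of degree $3$ are the at most two spider centres, each incident to just one leg of length greater than $1$, so $G$ has no induced $S_{1,2,2}$, and since any induced $K_{1,3}$ has its centre at such a vertex, $G$ has no three pairwise disjoint induced $K_{1,3}$'s, i.e.\ is $3K_{1,3}$-free; and since $H_1$ has a vertex of degree $4$ while each $H_k$ with $k\ge 2$ is connected with two vertices of degree $3$ (whereas every component of $G$ has at most one), no $H_k$ embeds in $G$. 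Thus $G$ lies in every term of the sequence, and so in ${\cal L}$.

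For the converse, let $G\in{\cal L}$. Being a $C_4$-free bipartite permutation graph, $G$ is a caterpillar forest, in particular a forest; being $K_{1,4}$-free, it has maximum degree at most $3$. The crux is the following claim: \emph{if a forest contains two distinct vertices $u,w$ of degree at least $3$ in one connected component, then it contains an induced $H_k$ for some $k\ge 2$}. Indeed, let $u=p_0p_1\cdots p_\ell=w$ be the unique (hence induced) $u$--$w$ path, with $\ell\ge 1$, and choose two neighbours $a,a'$ of $u$ and two neighbours $b,b'$ of $w$ not on this path. Using that the component is acyclic one checks that $a,a',b,b'$ are four distinct vertices off the path, that $a$ and $a'$ have no neighbour in $\{p_0,\dots,p_\ell,b,b'\}$ other than $u$, that $b$ and $b'$ have no neighbour in $\{p_0,\dots,p_\ell,a,a'\}$ other than $w$, and that $a\not\sim a'$ and $b\not\sim b'$ — any violation produces a cycle. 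Hence the subgraph induced on $\{a,a',p_0,\dots,p_\ell,b,b'\}$ is exactly $H_{\ell+1}$. Since $G$ is $H_k$-free for all $k$, it follows that every component of $G$ has at most one vertex of degree at least $3$.

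It remains to read off the structure. Let $C$ be a component of $G$. If $C$ has no vertex of degree $3$ it is a path. Otherwise it has a unique vertex $w$ of degree $3$, and every other vertex has degree at most $2$; since $C$ is a tree, $C-w$ is a disjoint union of three paths and $w$ is joined to an endpoint of each (a non-endpoint neighbour of $w$ would itself have degree at least $3$), so $C=S_{a,b,c}$ with $1\le a\le b\le c$. If $b\ge 2$, truncating the two longer legs to length $2$ and the shorter to length $1$ exhibits an induced $S_{1,2,2}$ in $C$, which is impossible; hence $a=b=1$ and $C\cong S_{1,1,c}$. Finally, each such spider component contains an induced $K_{1,3}$, and induced subgraphs chosen in distinct components assemble into an induced disjoint union, so three or more spider components would yield an induced $3K_{1,3}$; $3K_{1,3}$-freeness therefore caps their number at two, which is exactly the assertion. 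The only genuinely delicate point is the forest claim above: all the work there is in invoking acyclicity to rule out every potential chord of the candidate copy of $H_{\ell+1}$ and to verify that $a,a',b,b'$ are distinct and avoid the path; the rest is bookkeeping.
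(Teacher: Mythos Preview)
Your argument is correct in substance and, in fact, more complete than the paper's, since you prove both inclusions while the paper only argues that every graph in $\mathcal L$ has the stated form. There is one small factual slip: in this paper $H_1$ is the six-vertex tree consisting of two \emph{adjacent} degree-$3$ vertices, each with two pendants (the index $k$ counts the edges of the central path), so $H_1$ has no vertex of degree~$4$. This does not affect correctness, because your argument for $H_k$ with $k\ge 2$---that a connected graph with two vertices of degree~$3$ cannot embed into a component of $G$---applies verbatim to $H_1$. Relatedly, the induced subgraph you exhibit in the forest claim is $H_\ell$ rather than $H_{\ell+1}$ under the paper's indexing, but since $\mathcal L$ forbids every $H_k$ this is immaterial.

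Your route and the paper's differ mainly in the order of eliminations. The paper first uses $S_{1,2,2}$-freeness (together with the caterpillar structure and the degree bound) to classify each connected component as one of $P_k$, $S_{1,1,k}$ or $H_k$, and only then invokes $H_k$-freeness to discard the last option. You instead use $H_k$-freeness first, via the clean forest argument that two high-degree vertices in one component force an induced $H_k$, to conclude that every component has at most one vertex of degree~$3$; then $S_{1,2,2}$-freeness pins down the spider shape. Both orderings work; yours has the advantage of making the role of the forbidden $H_k$'s explicit and self-contained, at the cost of spelling out the acyclicity checks that the paper's caterpillar classification absorbs.
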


\begin{proof}
By forbidding $C_4$ in the universe of  bipartite permutation graphs we restrict ourselves to caterpillar forests.
By forbidding  $K_{1,4}$ we further restrict ourselves to caterpillar forests of vertex degree at most $3$.
If, additionally, we forbid an $S_{1,2,2}$, then every connected graph becomes an $H_k$ or an $S_{1,1,k}$ or a $P_k$.
Since all graphs of the form $H_k$ are forbidden, every connected graph is either an $S_{1,1,k}$ or a $P_k$.
Finally, since  $3K_{1,3}$ is forbidden, at most two components have the form $S_{1,1,k}$.
\end{proof}

To show the minimality of the class $\cal L$, we use the following criterion proved in \cite{boundary-wqo}.

\begin{lemma}\label{lem:minimality-criterion}
A limit class ${\cal X}=\Free(\cal M)$ is minimal (i.e., boundary) if and only if
for every graph $G\in {\cal X}$ there is a finite set $\cal T\subseteq \cal M$,
such that $\Free(\{G\}\cup \cal T)$ is good.
\end{lemma}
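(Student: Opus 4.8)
The plan is to derive the equivalence from a standard characterisation of limit classes, which I shall call $(\star)$: \emph{for any family ${\cal N}$ of graphs, $\Free({\cal N})$ is a limit class if and only if $\Free({\cal F})$ is bad for every finite ${\cal F}\subseteq{\cal N}$} --- and, importantly, this holds for an arbitrary generating family, not merely the inclusion-minimal one. The ``if'' direction is immediate: listing ${\cal N}=\{N_1,N_2,\dots\}$, the chain $\Free(N_1)\supseteq\Free(N_1,N_2)\supseteq\cdots$ consists of bad classes and has intersection $\Free({\cal N})$. For ``only if'', let $\Free({\cal N})=\bigcap_i{\cal C}_i$ with every ${\cal C}_i$ bad, and suppose for contradiction that $\Free({\cal F})$ is good for some finite ${\cal F}\subseteq{\cal N}$. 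Then no ${\cal C}_i$ is contained in the good class $\Free({\cal F})$, so every ${\cal C}_i$ contains a graph having an induced subgraph in ${\cal F}$; since ${\cal F}$ is finite and the ${\cal C}_i$ are hereditary and nested, one fixed $N\in{\cal F}$ lies in all ${\cal C}_i$, whence $N\in\bigcap_i{\cal C}_i=\Free({\cal N})\subseteq\Free({\cal F})$, contradicting $N\in{\cal F}$. (This is essentially a lemma of \cite{boundary-wqo}; in the write-up I would cite it, or include the lines above.) I will use throughout that limit classes are hereditary (intersections of hereditary classes) and infinite (a descending chain of bad classes has infinite intersection, by a short pigeonhole argument on graph orders), and that every hereditary class equals $\Free({\cal N})$ for ${\cal N}$ the family of its minimal forbidden induced subgraphs.

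For the forward implication I argue by contraposition. Suppose some $G\in{\cal X}$ satisfies: $\Free(\{G\}\cup{\cal T})$ is bad for every finite ${\cal T}\subseteq{\cal M}$. Put ${\cal Y}:={\cal X}\cap\Free(G)=\Free({\cal M}\cup\{G\})$, a proper hereditary subclass of ${\cal X}$ since $G\in{\cal X}\setminus{\cal Y}$. Every finite subfamily of ${\cal M}\cup\{G\}$ either avoids $G$ --- then it lies in ${\cal M}$ and defines a bad class, by $(\star)$ applied to the limit class ${\cal X}=\Free({\cal M})$ --- or equals $\{G\}\cup{\cal T}$ for a finite ${\cal T}\subseteq{\cal M}$ and defines a bad class by hypothesis. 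Hence every finite subfamily of ${\cal M}\cup\{G\}$ defines a bad class, so by $(\star)$ the class ${\cal Y}=\Free({\cal M}\cup\{G\})$ is a limit class; being a proper subclass of ${\cal X}$, it certifies that ${\cal X}$ is not a minimal limit class.

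For the backward implication, assume the criterion and suppose for contradiction that ${\cal Y}\subsetneq{\cal X}$ is a limit class; let ${\cal N}$ be the family of minimal forbidden induced subgraphs of ${\cal Y}$, so ${\cal Y}=\Free({\cal N})$. Pick $G\in{\cal X}\setminus{\cal Y}$ of minimum order; every proper induced subgraph of $G$ then lies in ${\cal X}$ and has smaller order, hence lies in ${\cal Y}$, so $G\in{\cal N}$. By the criterion there is a finite ${\cal T}\subseteq{\cal M}$ with $\Free(\{G\}\cup{\cal T})$ good. Each $T\in{\cal T}$ lies outside ${\cal X}\supseteq{\cal Y}$, so $T$ contains some $N_T\in{\cal N}$ as an induced subgraph. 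Set ${\cal F}:=\{G\}\cup\{N_T:T\in{\cal T}\}$, a \emph{finite} subset of ${\cal N}$. A graph avoiding every member of ${\cal F}$ avoids $G$ and, avoiding each $N_T$, also avoids each $T$; thus $\Free({\cal F})\subseteq\Free(\{G\}\cup{\cal T})$, so $\Free({\cal F})$ is good. But ${\cal Y}=\Free({\cal N})$ is a limit class, so $(\star)$ forces $\Free({\cal F})$ to be bad --- a contradiction. Hence no limit class is properly contained in ${\cal X}$, i.e., ${\cal X}$ is boundary.

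I expect the real content to lie in two places. First, the auxiliary fact $(\star)$, and specifically the need for it in the form that applies to arbitrary generating families: in the backward direction one passes from ${\cal M}$ to the minimal family of a \emph{different} class ${\cal Y}$, so a version of $(\star)$ for minimal families only would not suffice. Second, the small ``pull-back'' step in the backward direction, where each $T\in{\cal T}$ is replaced by a forbidden induced subgraph $N_T\in{\cal N}$ that it contains --- this is exactly what brings the good class $\Free(\{G\}\cup{\cal T})$ within reach of $(\star)$ for ${\cal Y}$. The remaining ingredients --- heredity and infiniteness of limit classes, the minimum-order argument placing $G$ in ${\cal N}$, and the fact that the subclass ${\cal X}\cap\Free(G)$ from the forward direction is itself a limit class, hence a genuine (infinite) class --- are routine.
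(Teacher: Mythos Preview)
The paper does not actually prove this lemma: it is quoted as ``the following criterion proved in \cite{boundary-wqo}'' and used as a black box. So there is no in-paper proof to compare your proposal against.

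That said, your argument is correct and self-contained, and it is essentially the proof one finds in \cite{boundary-wqo}. The auxiliary characterisation $(\star)$ is exactly the lemma in that reference stating that a hereditary class is a limit class if and only if every finite subfamily of (any choice of) forbidden subgraphs defines a bad class; your short verification of $(\star)$ via the pigeonhole-on-a-nested-chain argument is the standard one. Both directions of the main equivalence are handled cleanly: the forward direction via the contrapositive, showing $\Free({\cal M}\cup\{G\})$ is again a limit class; the backward direction via the minimum-order choice of $G$ (placing it among the minimal forbidden subgraphs of ${\cal Y}$) together with the ``pull-back'' step replacing each $T\in{\cal T}$ by some $N_T\in{\cal N}$ it contains. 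Your remark that $(\star)$ must be available for arbitrary generating families, not only minimal ones, is well placed --- that is precisely what the backward direction needs. The side claims (limit classes are hereditary and infinite) are correct; the infiniteness follows, as you note, because each bad class in the chain, being hereditary and infinite, contains a graph of every order, and a nested intersection of nonempty finite sets of order-$n$ graphs is nonempty.
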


\begin{theorem}
$\cal L$ is the only boundary class in the universe of bipartite permutation graphs.
\end{theorem}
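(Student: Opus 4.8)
The plan is to establish two facts: (A) ${\cal L}$ is a \emph{minimal} limit class in the universe of bipartite permutation graphs, and (B) ${\cal L}$ is contained in every limit class of ${\cal BP}$. Together these give the theorem: (A) says ${\cal L}$ is a boundary class, and if ${\cal Z}$ is any boundary class then (B) gives ${\cal L}\subseteq{\cal Z}$, so minimality of ${\cal Z}$ — together with the fact that ${\cal L}$ is itself a limit class — forces ${\cal Z}={\cal L}$. That ${\cal L}$ \emph{is} a limit class is immediate: each ${\cal BP}\cap\Free(C_4,K_{1,4},S_{1,2,2},3K_{1,3},H_1,\ldots,H_k)$ is bad because it contains the infinite antichain $H_{k+1},H_{k+2},\ldots$, and these classes descend to ${\cal L}$.

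For (A) I would apply Lemma~\ref{lem:minimality-criterion} with the single finite set ${\cal T}=\{C_4,K_{1,4},S_{1,2,2},3K_{1,3}\}\subseteq{\cal M}$, simultaneously for every $G\in{\cal L}$. By the analysis in the proof of Lemma~\ref{lem:X-structure}, each graph of ${\cal BP}\cap\Free({\cal T})$ is the disjoint union of a linear forest with at most two further components, each of the form $S_{1,1,m}$ or $H_j$. The crucial point is that every $G\in{\cal L}$ embeds as an induced subgraph into $H_j$ for all large $j$: place the at most two spider components of $G$ at the two ends of the spine of $H_j$ (using the pendant pairs and initial segments of the spine) and pack the path components of $G$ along the remaining spine with gaps. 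Consequently only finitely many $H_j$ can occur in ${\cal BP}\cap\Free(\{G\}\cup{\cal T})$. A short case analysis then finishes the argument: if $G$ has at most one spider component, the same embedding trick (now into $S_{1,1,m}$) bounds the spiders that occur, so the class consists of disjoint unions of a linear forest with at most two components from a fixed finite family; if $G$ has two spider components, then $G$-freeness forces at least one of any two spider components present to have parameter bounded in terms of $G$, so the possible ``branchy'' components still form a well-quasi-ordered family — finitely many $H_j$ together with the chain $S_{1,1,1}\le S_{1,1,2}\le\cdots$. Since the class of linear forests is well-quasi-ordered (Higman's lemma applied to the multisets of path lengths; the induced-subgraph order on linear forests admits no infinite antichain and no infinite strictly descending chain), and since the disjoint union of a well-quasi-ordered class with a bounded number of components from a well-quasi-ordered class is again well-quasi-ordered, ${\cal BP}\cap\Free(\{G\}\cup{\cal T})$ is good. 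Lemma~\ref{lem:minimality-criterion} then yields (A).

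For (B) the decisive ingredient is the claim
$$(\dagger)\qquad\text{every bad subclass }{\cal Y}\subseteq{\cal BP}\text{ contains }H_j\text{ as an induced subgraph for infinitely many }j,$$
equivalently, ${\cal BP}\cap\Free(H_N,H_{N+1},H_{N+2},\ldots)$ is well-quasi-ordered for every $N$. Granting $(\dagger)$, let ${\cal Z}=\bigcap_i{\cal Z}_i$ be a limit class of ${\cal BP}$ with each ${\cal Z}_i\subseteq{\cal BP}$ bad. Fix $i$: by $(\dagger)$, ${\cal Z}_i$ contains $H_j$ for arbitrarily large $j$, and since every $G\in{\cal L}$ embeds into all large $H_j$ and ${\cal Z}_i$ is hereditary, ${\cal L}\subseteq{\cal Z}_i$. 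As $i$ is arbitrary, ${\cal L}\subseteq\bigcap_i{\cal Z}_i={\cal Z}$, which is exactly (B).

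The hard part will be $(\dagger)$. I would attack it through the geometric structure of bipartite permutation graphs: every such graph is an induced subgraph of a staircase graph $H_{n,n}$, and one proves a structural dichotomy — a bipartite permutation graph whose staircase is long and repeatedly ``alternating'' must contain a large $H$-graph, so a graph avoiding all of $H_N,H_{N+1},\ldots$ is assembled from a number of monotone blocks bounded in terms of $N$. This boundedness (expressible, say, as bounded lettericity, or as membership in a geometrically griddable class) then delivers well-quasi-orderability, either by the theorem that graphs of bounded lettericity are well-quasi-ordered~\cite{Pet02} or by a direct Higman-type argument on the block decomposition. Making the dichotomy precise and keeping the bounds uniform in $N$ is the main technical obstacle.
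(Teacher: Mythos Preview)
Your overall strategy matches the paper's: establish that ${\cal L}$ is a boundary class via Lemma~\ref{lem:minimality-criterion}, then use the fact that every bad subclass of ${\cal BP}$ contains infinitely many $H$-graphs to show ${\cal L}$ sits inside every limit class. Two differences are worth flagging.

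For (A), the paper takes a shorter route than your case analysis. Rather than fixing ${\cal T}=\{C_4,K_{1,4},S_{1,2,2},3K_{1,3}\}$ and then arguing separately about the possible $H_j$'s and spiders left over, the paper lets ${\cal T}$ depend on $G$: it chooses $k$ so that $G$ embeds in every $H_i$ with $i\ge k$, and takes ${\cal T}=\{C_4,K_{1,4},S_{1,2,2},3K_{1,3},H_1,\ldots,H_{k-1}\}$. Then $\Free(\{G\}\cup{\cal T})$ (intersected with ${\cal BP}$) is already a subclass of ${\cal L}$ itself, because every $H_j$ is now forbidden, and ${\cal L}$ is transparently wqo since its connected graphs are paths and graphs $S_{1,1,m}$. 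Your argument is not wrong, just longer than needed; allowing ${\cal T}$ to absorb the finitely many small $H_j$'s eliminates your case split entirely.

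For (B), your $(\dagger)$ is exactly the statement that the $H$-graphs form a \emph{canonical} antichain for bipartite permutation graphs, and the paper does not prove this: it cites \cite{canonical} (Lozin--Mayhill). So the ``hard part'' you anticipate, and the structural dichotomy you sketch via bounded lettericity, is already available as a black box in the literature. Invoking it directly collapses (B) to a couple of lines, as the paper does: forbidding any $G\in{\cal L}$ kills all large $H_j$, hence yields a wqo class, hence ${\cal L}$ is contained in every bad subclass of ${\cal BP}$ and therefore in every limit class.
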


\begin{proof}
Let $G$ be a graph in $\cal L$. According to Lemma~\ref{lem:X-structure}, there is a $k$ such that $G$ is an induced subgraph of $H_i$ for all $i\ge k$.
Therefore, $\Free(G,C_4,K_{1,4},S_{1,2,2},3K_{1,3},H_1,\ldots,H_{k-1})$ is a subclass of $\cal L$. It is well-known that graphs in a class are well-quasi-ordered
if and only if connected graphs in that class are well-quasi-ordered. Clearly, connected graphs in $\cal L$ are well-quasi-ordered,
and, hence, by Lemma~\ref{lem:minimality-criterion}, $\cal L$  is a boundary class in the universe of bipartite permutation graphs.

To prove the uniqueness of $\cal L$, we observe that the antichain of $H$-graphs is canonical in the universe of bipartite permutation graphs
in the sense that every hereditary subclass of bipartite permutation graphs containing finitely many $H$-graphs is well-quasi-ordered under induced
subgraphs \cite{canonical}. Therefore, by forbidding any graph from $\cal L$, we obtain a class which is well-quasi-ordered. This implies by Theorem~\ref{thm:finite} that
$\cal L$ is a unique boundary class in the universe of bipartite permutation graphs.
\end{proof}

\begin{theorem}\label{thm:classes-wqo}
Let $\Free(\mathcal L)$ denote the family of hereditary subclasses of bipartite permutation graphs, none of which contains $\cal L$ as a subclass.
Then $\Free(\mathcal L)$ is well-founded with respect to inclusion, i.e.,  it contains no strictly descending infinite chains of classes.
\end{theorem}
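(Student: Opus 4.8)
The plan is to argue by contradiction: suppose $\Free(\mathcal L)$ contains an infinite strictly descending chain ${\cal X}_1 \supsetneq {\cal X}_2 \supsetneq \cdots$ of hereditary subclasses of $\cal BP$, none of which contains $\cal L$. First I would pass to the set of minimal forbidden induced subgraphs: since each ${\cal X}_i$ is hereditary, write ${\cal X}_i = \Free({\cal M}_i)$ where ${\cal M}_i$ is the (possibly infinite) set of minimal forbidden induced subgraphs for ${\cal X}_i$; the strict descent means we can pick, for each $i$, a graph $G_i \in {\cal X}_i \setminus {\cal X}_{i+1}$, and such a $G_i$ must contain an induced copy of some member of ${\cal M}_{i+1}$ that lies in ${\cal BP}$.

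The key structural input is that none of the ${\cal X}_i$ contains $\cal L$, which by Lemma~\ref{lem:X-structure} means: for each $i$ there is a graph $L_i \in \cal L$ (a linear forest with up to two components of the form $S_{1,1,k}$) that is \emph{not} in ${\cal X}_i$, hence ${\cal X}_i$ omits some fixed graph from $\cal L$. The natural next step is to combine this with the "canonical antichain" fact quoted in the proof of the previous theorem: forbidding any single graph from $\cal L$ forces a bipartite permutation class to contain only finitely many $H$-graphs, and by \cite{canonical} such a class is well-quasi-ordered. So each ${\cal X}_i$ is a \emph{good} (wqo) class. I would then invoke the standard fact that in a wqo class, the poset of hereditary subclasses ordered by inclusion is itself well-founded — indeed, a strictly descending chain of hereditary subclasses of a fixed class ${\cal Y}$ corresponds to a strictly ascending chain of "obstruction sets" inside ${\cal Y}$, and in a wqo all antichains and all such ascending chains are finite, so ${\cal Y}$ has no infinite strictly descending chain of hereditary subclasses. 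Applying this with ${\cal Y} = {\cal X}_1$ (which is good, being a subclass of a good class) immediately contradicts the existence of the chain.

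The main obstacle, and the place requiring care, is the bookkeeping in the second step: a priori the witnesses $L_i$ showing $\cal L \not\subseteq {\cal X}_i$ could vary with $i$, so I cannot directly say "all ${\cal X}_i$ forbid the same graph of $\cal L$." But this is not actually needed: the argument only requires that \emph{each individual} ${\cal X}_i$ forbids \emph{some} graph of $\cal L$, which is enough to conclude each ${\cal X}_i$ is good via \cite{canonical}; well-foundedness is then a property of the single good class ${\cal X}_1$ and its hereditary subclasses, and all the ${\cal X}_i$ are such subclasses. One subtlety to double-check is that "subclass" here means "hereditary subclass" (so that ${\cal X}_{i+1}$ is indeed $\Free$ of some graph set), which is the standing convention in the paper. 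I would also make explicit the lemma that wqo of a class is inherited by subclasses and that wqo of a class implies well-foundedness of its lattice of hereditary subclasses — the latter is essentially the observation already used in the introduction that excluding antichain members one at a time produces descending chains, run in reverse.
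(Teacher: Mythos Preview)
Your proposal is correct and follows essentially the same route as the paper: each class in $\Free(\mathcal L)$ omits some graph of $\cal L$, hence contains only finitely many $H$-graphs and is therefore wqo by \cite{canonical}; then one applies the standard fact that the hereditary subclasses of a wqo class are well-founded under inclusion, using $\mathcal X_1$ as the ambient wqo class. The bookkeeping in your first paragraph (the sets ${\cal M}_i$ and the witnesses $G_i$) is never actually used and can be dropped.
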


\begin{proof} 	
Note that any class $\mathcal X \in \Free(\mathcal L)$ is wqo, since it contains only finitely many $H$\nobreakdash -graphs. 
It is well-known (and not difficult to see) that a class of graphs is wqo if and only if the set of its subclasses is well-founded under inclusion. 
This immediately implies that $\Free(\mathcal L)$ itself is well-founded under inclusion.
\end{proof}

Theorem~\ref{thm:classes-wqo} shows that in the universe of bipartite permutation graphs the unique boundary class $\cal L$ is the only obstacle to finding minimal ``difficult'' classes.
Note, however, that the number of minimal classes may in principle be infinite: an example of an infinite antichain of classes with respect to inclusion in $\Free(\mathcal L)$ 
is given by the sequence $(\mathcal X_i)_{i \geq 1}$, with $\mathcal X_i := \mathcal{BP} \cap \Free(P_{4 + i}, H_1, \dots, H_{i - 1})$ 
(in particular, we have $\mathcal X_1 = \mathcal{BP}\cap \Free(P_5)$). We remark that the above antichain is constructed in such a way that $H_i \in \mathcal X_i \setminus \bigcup_{j \neq i} \mathcal X_j$. 
The dependence of the construction on the canonical antichain of $H$-graphs raises the following question.

\begin{problem}
Is the antichain $(\mathcal X_i)_{i \geq 1}$ canonical in $\mathcal \Free(\mathcal L)$?
\end{problem}

One more open problem related to the notion of well-quasi-ordering comes from the fact that the set of $H$-graphs, which forms a (canonical) antichain
with respect to induced subgraphs, is a chain with respect to induced minors, where an induced minor of a graph $G$ is any graph obtained from $G$ by 
a (possibly empty) sequence of vertex deletions and edge contractions. 

Well-quasi-orderability under induced minors has been studied before -- see, for instance, \cite{induced-minor}, where it was shown that interval graphs are not wqo under the relation, 
but chordal graphs of bounded clique number are. We note that the class of bipartite permutation graphs is not closed under taking 
induced minors, but this should not stop us from investigating its orderability with respect to the induced minor relation, 
especially given its similarity with the class of unit interval graphs (see, e.g., \cite{canonical}). In particular, we ask the following question.


\begin{problem}
Is the class of bipartite permutation graphs well-quasi-ordered under the induced minor relation?
\end{problem}

\section{Graph parameters and minimal classes}
\label{sec:parameters}

Many important parameters are bounded in the class of bipartite permutation graphs, which is the case for the size of a maximum clique, chromatic number, contiguity, etc.
Many other parameters can be arbitrarily large in this class. In the present section, we characterize several such parameters in terms of minimal subclasses of
bipartite permutation graphs where these parameters are unbounded.
We start by reporting in Section~\ref{sec:known} some known results, which will be helpful in the subsequent sections. 

\subsection{Neighbourhood diversity, distinguishing number and uniformicity}
\label{sec:known}

The notion of {\it neighbourhood diversity} was introduced in \cite{Lampis} and  can be defined as follows.

\begin{definition}
We say that two vertices $x$ and $y$ are {\it similar} if there is no vertex $z \neq x, y$ distinguishing them,
i.e., if there is no vertex $z \neq x, y$ adjacent to exactly one of $x$ and $y$. Vertex similarity is an equivalence relation. We denote by
$nd(G)$ the number of similarity classes in $G$ and call it the {\it neighbourhood diversity} of $G$.
\end{definition}

Neighbourhood diversity was characterized in \cite{parameters} by means of nine minimal hereditary classes of graphs
where this parameter is unbounded. Three of these classes are subclasses of bipartite graphs: 
\begin{itemize}
\item the class ${\cal X}_1$ of chain graphs,
\item the class ${\cal X}_2$ of graphs of vertex degree at most $1$,
\item the class ${\cal X}_3$ of bipartite complements of graphs of vertex degree at most $1$, i.e.,
bipartite graphs, in which every vertex has at most one non-neighbour in the opposite part. 
\end{itemize}
Three other classes are complements of graphs in  ${\cal X}_1$, ${\cal X}_2$ and ${\cal X}_3$, 
and the remaining three classes are subclasses of split graphs obtained by creating a clique 
in one of the parts in a bipartition of graphs in  ${\cal X}_1$, ${\cal X}_2$ and ${\cal X}_3$.
The subclass of split graphs obtained in this way from graphs in ${\cal X}_1$ is known as {\it threshold graphs}.
Only two of the nine listed classes are subclasses of bipartite permutation graphs,
which allows us to make the following conclusion. 

\begin{theorem}\label{thm:nbd}
The classes of chain graphs and graphs of vertex degree at most $1$  are
the only two minimal hereditary subclasses of bipartite permutation graphs of unbounded neighbourhood diversity.
\end{theorem}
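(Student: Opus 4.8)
The plan is to derive the statement from the characterisation of neighbourhood diversity established in \cite{parameters}. That result asserts that a hereditary class of graphs has unbounded neighbourhood diversity if and only if it contains one of the nine listed classes as a subclass, and that each of these nine classes is itself a minimal hereditary class of unbounded neighbourhood diversity. The first step is then a short formal argument. Suppose $\mathcal Y$ is a hereditary subclass of $\mathcal{BP}$ with unbounded neighbourhood diversity. By the characterisation, $\mathcal Y$ contains one of the nine classes, say $\mathcal M$; since $\mathcal M \subseteq \mathcal Y \subseteq \mathcal{BP}$ and $\mathcal M$ also has unbounded neighbourhood diversity, minimality of $\mathcal Y$ forces $\mathcal Y = \mathcal M$. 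Hence the minimal hereditary subclasses of $\mathcal{BP}$ of unbounded neighbourhood diversity are exactly those of the nine classes that are contained in $\mathcal{BP}$, and it suffices to determine which these are.

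The second step is to inspect the nine classes. The classes $\mathcal X_1$ of chain graphs and $\mathcal X_2$ of graphs of vertex degree at most $1$ are subclasses of $\mathcal{BP}$, as recorded in Section~\ref{sec:bpg} (see Figure~\ref{fig:inclusion}). For the remaining seven I would exhibit a member outside $\mathcal{BP}$. Six of them are not classes of bipartite graphs at all: the three split-graph classes obtained by turning one part of the bipartition of $\mathcal X_1,\mathcal X_2,\mathcal X_3$ into a clique contain arbitrarily large complete graphs, and the three classes of complements of graphs in $\mathcal X_1,\mathcal X_2,\mathcal X_3$ contain a triangle (for instance $\overline{3K_1}=K_3$ since $3K_1\in\mathcal X_1\cap\mathcal X_2$, and $\overline{C_6}=2K_3$ since $C_6\in\mathcal X_3$); as $\mathcal{BP}$ consists of bipartite graphs, none of these six is a subclass of $\mathcal{BP}$. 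The only bipartite class among the remaining seven is $\mathcal X_3$ itself --- the bipartite graphs in which every vertex has at most one non-neighbour in the opposite part --- and here one checks that $C_6$ (in its natural bipartition each vertex misses exactly one vertex on the other side) belongs to $\mathcal X_3$; since $C_6$ is a forbidden induced subgraph of $\mathcal{BP}$, we conclude $\mathcal X_3 \not\subseteq \mathcal{BP}$.

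Combining the two steps, $\mathcal X_1$ and $\mathcal X_2$ are the only members of the nine-element list contained in $\mathcal{BP}$, so they are the only two minimal hereditary subclasses of $\mathcal{BP}$ of unbounded neighbourhood diversity, which is the claim. There is no serious obstacle once \cite{parameters} is invoked: the work is the routine exclusion of the seven classes, the only one requiring more than a one-line observation being $\mathcal X_3$, for which one must point to the concrete small witness $C_6$. The only subtlety worth stating carefully is that ``minimal'' in the conclusion is meant within the poset of hereditary subclasses of $\mathcal{BP}$ of unbounded neighbourhood diversity, and the reduction works precisely because each of the nine classes of \cite{parameters} is minimal already among \emph{all} hereditary classes with this property.
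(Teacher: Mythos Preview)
Your approach is correct and matches the paper's, which simply observes (without spelling out the case analysis) that of the nine minimal classes from \cite{parameters} only $\mathcal X_1$ and $\mathcal X_2$ lie inside $\mathcal{BP}$. One small slip: $\overline{C_6}$ is the triangular prism, not $2K_3$; this does not affect your argument, since the prism still contains a triangle and hence witnesses that the class of complements of $\mathcal X_3$ is not a class of bipartite graphs.
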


\medskip 
The notion of distinguishing number appeared implicitly in \cite{Jump} and was given its name in \cite{Bell}.
To define this notion, consider a graph $G$, a subset $U \subseteq V(G)$, a collection of pairwise disjoint subsets $U_1$,~\dots,~$U_m$ of~$V(G)$, also disjoint from $U$. 
We will say that  $U$~\emph{distinguishes} the sets $U_1$, $U_2$,~\dots,~$U_m$  if for each~$i$, all vertices of~$U_i$
have the same neighbourhood in $U$, and for each $i \neq j$,
vertices $x \in U_i$ and $y \in U_j$ have different neighbourhoods in~$U$.

\begin{definition}
The {\it distinguishing number} of $G$ is the maximum $k$ such that $G$ contains a subset $U\subset V(G)$ that 
distinguishes at least $k$ subsets of $V(G)$, each of size at least $k$.
\end{definition}

The paper~\cite{Jump} provides a complete description of minimal classes of unbounded distinguishing number, of which there are precisely 13:

\begin{itemize}
\item the class of graphs every connected component of which is a clique,
\item the class of chain graphs,
\item the class of threshold graphs,
\item the class of star forests,
\item the class of graphs obtained from star forests by creating a clique on the leaves of the stars,
\item the class of graphs obtained from star forests by creating a clique on the centers of the stars,
\item the class of graphs obtained from star forests by creating a clique on the leaves of the stars and a clique on the centers of the stars,
\item the classes of complements of graphs in the above listed classes (note that the complements of threshold graphs are threshold graphs).
\end{itemize}

The global structure of graphs of bounded distinguishing number can informally be described as follows: the vertex set of every graph admits a partition into finitely many subsets
such that each subset induces a graph of bounded degree or co-degree (i.e., degree in the complement) and the edges between any two subsets form a bipartite graph of bounded degree or co-degree.
Note that bounded neighbourhood diversity is the special case where we require the degree or co-degree to equal 0, so that bounded neighbourhood diversity implies bounded distinguishing number (the same conclusion 
can be obtained by comparing the respective lists of minimal classes).

\medskip
One can define a parameter between neighbourhood diversity and distinguishing number, known as {\it uniformicity}, as follows.  
Let $k$ be a natural number,  $F$ 
a simple graph on the vertex set $\{1, 2, \ldots, k\}$, and $K$ a graph on $\{1, 2, \ldots, k\}$ with loops allowed. 
Let $U(F)$ be the disjoint union of infinitely many copies of $F$, and for $i = 1, \ldots,k$, let $V_i$ be the subset of
vertices of $U(F)$ containing vertex $i$ from each copy of $F$. 
Now we construct from $U(F)$ an infinite graph $U(F,K)$ on the same vertex set
by connecting two vertices $u \in V_i$ and $v \in V_j$ if and only if 
\begin{itemize}
\item either $uv \in E(U(F))$ and $ij\not\in E(K)$,
\item or $uv \not\in E(U(F))$ and $ij\in E(K)$.
\end{itemize}

Intuitively, we start with independent sets $V_i$ corresponding to the vertices of $F$, and for $i \neq j$, 
the sets $V_i$ and $V_j$ have an induced matching between them wherever $F$ has an edge. 
We then apply complementations according to the edges of $K$: if $ij\in E(K)$, we complement the edges between $V_i$ and $V_j$ (replacing $V_i$ with a clique if $i = j$). 

Finally, we let ${\cal P}(F,K)$ be the hereditary class consisting of all the finite induced subgraphs of $U(F,K)$.    
As an example, if $k=2$, $F=K_2$ and $E(K)=\emptyset$, then ${\cal P}(F,K)$ is the class of graphs of vertex degree at most $1$, 
and if $E(K)=\{12\}$ instead, then ${\cal P}(F,K)$ is the class of bipartite complements of graphs of vertex degree at most $1$.

\begin{definition}
A graph $G$ is called $k$-{\it uniform} if there is a number $k$ such that $G \in {\cal P}(F,K)$ for some $F$ and $K$.
The minimum $k$ such that $G$ is $k$-uniform is the {\it uniformicity} of $G$.
\end{definition}

From the results in  \cite{SpHerProp} and \cite{Jump} it follows that classes of bounded uniformicity are precisely the classes whose speed (i.e., the number of $n$-vertex labelled graphs) is below the Bell number. 
The set of minimal hereditary classes with speed at least the Bell number, and hence of unbounded uniformicity, consists of the 13 minimal classes of unbounded distinguishing number together with 
an infinite collection of classes, which can be described as follows. 

Let $A$ be a finite alphabet, $w=w_1w_2\ldots$ an infinite word over $A$, and $K$ an undirected graph with loops allowed and with vertex set $V(K)=A$.
We define the graph $U(w,K)$ as follows: the vertex set of $U(w,K)$ is the set of natural numbers and the edge set consists of pairs of distinct numbers $i,j$
such that
\begin{itemize}
\item either $|i-j|=1$ and $w_{i}w_{j} \notin{E(K)}$,
\item or $|i-j| > 1$ and $w_{i}w_{j} \in E(K)$.
\end{itemize}
To illustrate this notion, consider the case of $A=\{a\}$ and $E(K)=\emptyset$. 
Then the infinite word $w=aaa\ldots$ defines the infinite path $U(w,K)$. In fact, if $E(K)=\emptyset$, then $U(w,K)$ is an infinite path for any $w$.

Define ${\cal P}(w, K)$ to be the hereditary class consisting of all 
the finite induced subgraphs of $U(w,K)$. In particular, if $E(K)=\emptyset$, then ${\cal P}(w, K)$ is the class of linear forests.

A {\it factor} in a word $w$ is a contiguous subword, i.e., a subword whose letters appear consecutively in $w$.
A word~$w$ is called \emph{almost periodic} if for any factor~$f$
of~$w$ there is a constant~$k_f$ such that any factor of~$w$
of size at least~$k_f$ contains~$f$ as a factor.

\begin{theorem}{\rm \cite{Bell}}\label{thm:minimal-bell}
Let $X$ be a class of graphs with finite distinguishing number. Then $X$ is a minimal hereditary
class with speed at least the Bell number if and only if there exists a finite graph $K$ with loops allowed
and an infinite almost periodic word $w$ over $V(K)$ such that $X ={\cal P}(w,K)$.
\end{theorem}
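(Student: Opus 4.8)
The plan is to reduce the statement, as a first step, to one purely about uniformicity: by the results quoted above from \cite{SpHerProp} and \cite{Jump}, a hereditary class has speed at least the Bell number precisely when its uniformicity is unbounded, so it suffices to prove that, among hereditary classes of finite distinguishing number, the minimal ones of unbounded uniformicity are exactly the classes $\mathcal P(w,K)$ with $w$ infinite and almost periodic. I would then handle the two implications separately.

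For sufficiency, fix an infinite almost periodic word $w$ over $V(K)$ and put $X=\mathcal P(w,K)$. First I would check $X$ has finite distinguishing number: in $U(w,K)$ the adjacency of a vertex $x$ to any set $U$ depends, for the elements of $U$ at distance more than $1$ from $x$, only on the letter $w_x$, so the ``type'' of $x$ with respect to $U$ is determined by $w_x$ together with whichever of the two neighbours $x-1,x+1$ lie in $U$; hence only boundedly many types (in terms of $|V(K)|$) can be realised by large vertex sets, capping the distinguishing number. Next, $X$ has unbounded uniformicity: the ``path at distance exactly $1$'' in $U(w,K)$ cannot be accommodated inside any $U(F,K')$ — a disjoint union of copies of a finite graph with a global complementation pattern — since edges between distinct copies there follow a fixed bounded pattern, incompatible with arbitrarily long induced paths; this is already the reason the case $E(K)=\emptyset$, the class of linear forests, has unbounded uniformicity. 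Finally, for minimality I would take a proper hereditary subclass $Y\subsetneq X$, choose $G\in X\setminus Y$, and use almost periodicity: each of the finitely many factors of $w$ occurring as a ``consecutive block'' of $G$ recurs with a bounded gap, so $G$ is an induced subgraph of the finite graph $B_N:=U(w,K)[\{1,\dots,N\}]$ for some $N=N(G)$; since $G\preceq B_N$ we get $Y\subseteq\Free(G)\cap X\subseteq\Free(B_N)\cap X$. One then shows $\Free(B_N)\cap X$ has bounded uniformicity: if $H\in X$ avoids $B_N$ then, letting $L$ be a recurrence bound for the factor $w_1\cdots w_N$, every maximal run of consecutive positions used by $H$ has length less than $L$ (a longer run would contain an occurrence of $w_1\cdots w_N$, hence a copy of $B_N$), so $H$ is assembled from a bounded number of block-types glued according to $K$, and a routine lemma gives bounded uniformicity of any such class.

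For necessity, let $X$ be minimal with finite distinguishing number and unbounded uniformicity. Using the global structure of graphs of bounded distinguishing number recalled above, every $G\in X$ admits a partition into boundedly many parts, each of bounded degree or co-degree, with bounded degree or co-degree between parts. I would run a compactness/Ramsey argument on a sequence of graphs in $X$ of growing uniformicity to produce an infinite graph $U$ with $X=\{\text{finite induced subgraphs of }U\}$ (minimality makes $X$ generated by a single such $U$), inheriting the same bounded partition; analysing the locally finite (or locally co-finite) interaction within and between the parts, together with a linear order on the vertices extracted from the same Ramsey step, one reads off a finite graph $K$ on the part-types and an infinite word $w$ over $V(K)$ with $U=U(w,K)$. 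It remains to see that minimality forces $w$ to be almost periodic: if some factor $f$ fails to recur with bounded gap, arbitrarily long factors of $w$ avoid $f$, and a limit of these yields an infinite word $w'$ all of whose factors are factors of $w$ but with $f$ not a factor of $w'$; then $\mathcal P(w',K)$ is a proper subclass of $X$ still of unbounded uniformicity (being $U(w',K)$ with $w'$ infinite), contradicting minimality.

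The main obstacle is the necessity direction, specifically the extraction of the pair $(w,K)$ from an abstract minimal class: turning ``bounded distinguishing number $+$ unbounded uniformicity $+$ minimality'' into the statement that $X$ is generated by a single infinite graph with a linear, word-indexed structure requires a careful compactness argument together with a case analysis of how boundedly many parts of bounded degree or co-degree can interact in an infinite graph, and one must be careful to obtain exactly the form $U(w,K)$ rather than something more general. By contrast the sufficiency direction is comparatively routine, the only non-trivial ingredients being the ``bounded blocks $\Rightarrow$ bounded uniformicity'' lemma and the clean word-combinatorial step that almost periodicity yields minimality.
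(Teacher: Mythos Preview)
This theorem is not proved in the paper at all: it is quoted verbatim from \cite{Bell} (Atminas, Collins, Foniok and Lozin), as indicated by the citation in the theorem heading, and the paper uses it as a black box to conclude the subsequent discussion on uniformicity. There is therefore no ``paper's own proof'' to compare your proposal against.

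As for your sketch on its own merits, the sufficiency direction is broadly on the right track, but the minimality step has a genuine gap. You argue that if $H\in X$ avoids $B_N$ then every maximal consecutive run of positions used by $H$ has length less than $L$, and hence $H$ is assembled from boundedly many block-types. The first claim is fine, but the passage from ``each block has bounded length'' to ``bounded uniformicity'' is not routine: there may be arbitrarily many blocks, and the inter-block adjacency between a vertex in one block and the \emph{nearest} vertex of the next block can still depend on whether they happen to be consecutive integers, so the structure is not immediately of the form $U(F,K')$. One needs an additional argument (or a further reduction forbidding more graphs) to control this boundary behaviour; in \cite{Bell} this is handled with some care. Your necessity direction is, as you acknowledge, only a plan: the compactness step producing a single infinite generator $U$ for a minimal class, and then the identification of $U$ with some $U(w,K)$, are each substantial and constitute the real content of the theorem in \cite{Bell}; simply invoking ``Ramsey'' and ``bounded distinguishing number'' does not by itself yield the linear word-indexed structure.
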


\begin{definition}
Any class of the form ${\cal P}(w,K)$, where  $K$ is a finite graph with loops allowed
and $w$  is an infinite almost periodic word over $V(K)$, will be called a {\it class of folded linear forests}.
\end{definition}

We observe that each class of folded linear forests is a boundary class for well-quasi-orderability by induced subgraphs,
and in the world of classes with finite distinguishing number there are no other boundary classes, which was shown in \cite{finite}.

Summarising the discussion, we make the following conclusion.

\begin{theorem} 
The list of minimal hereditary classes of unbounded uniformicity consists of the 13 classes of unbounded distinguishing number and all the classes of 
folded linear forests.
\end{theorem}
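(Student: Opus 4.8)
The plan is to reduce the statement, via the two equivalences recalled in this section, to Theorem~\ref{thm:minimal-bell}. The key reformulation to record first is that, combining the results of \cite{SpHerProp} and \cite{Jump}, a hereditary class has bounded uniformicity if and only if its speed is below the Bell number. Since "unbounded uniformicity" and "speed at least the Bell number" are thus literally the same property of hereditary classes, they determine the same minimal classes, and it suffices to describe the minimal hereditary classes of speed at least the Bell number.

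Next I would split according to the distinguishing number. Let $\mathcal X$ be a minimal hereditary class of speed at least the Bell number. If $\mathcal X$ has unbounded distinguishing number, then it contains one of the $13$ minimal hereditary classes of unbounded distinguishing number, say $\mathcal Y$ (such a $\mathcal Y$ exists because, by \cite{Jump}, these $13$ classes are exactly the minimal hereditary classes of unbounded distinguishing number). Since uniformicity lies between neighbourhood diversity and distinguishing number, unbounded distinguishing number forces unbounded uniformicity, hence speed at least the Bell number; so $\mathcal Y$ is a hereditary class of speed at least the Bell number contained in $\mathcal X$, and minimality of $\mathcal X$ gives $\mathcal Y = \mathcal X$. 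Thus in this case $\mathcal X$ is one of the $13$. If instead $\mathcal X$ has finite distinguishing number, then Theorem~\ref{thm:minimal-bell} applies verbatim and produces a finite graph $K$ with loops allowed and an infinite almost periodic word $w$ over $V(K)$ with $\mathcal X = {\cal P}(w,K)$; that is, $\mathcal X$ is a class of folded linear forests. As the $13$ classes have infinite distinguishing number and the classes of folded linear forests have finite distinguishing number, the two families are disjoint, so this exhausts the minimal classes.

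For the converse inclusion I would check that every class in the asserted list is genuinely minimal. That each of the $13$ minimal classes of unbounded distinguishing number is already minimal among hereditary classes of speed at least the Bell number is part of the classification of \cite{Jump} (equivalently, every proper hereditary subclass of each of them has speed below the Bell number), so I would simply cite it. For a class of folded linear forests ${\cal P}(w,K)$, I would first observe that it has finite distinguishing number: this is immediate from the definition of $U(w,K)$, since any two vertices $i$ and $i'$ with $w_i = w_{i'}$ lying sufficiently far apart in $\mathbb N$ have, outside a bounded window around themselves, exactly the same neighbourhood, so there are only boundedly many similarity-type classes of large size. Then the "if" direction of Theorem~\ref{thm:minimal-bell} yields that ${\cal P}(w,K)$ is a minimal hereditary class of speed at least the Bell number, hence of unbounded uniformicity.

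The main obstacle here is bookkeeping rather than a new idea: one must make sure the two reductions line up (that "minimal of unbounded uniformicity" really coincides with "minimal of speed at least the Bell number"), and that the classes of folded linear forests fall on the finite-distinguishing-number side, so that Theorem~\ref{thm:minimal-bell} can be invoked in the direction needed. The honest verification that each of the $13$ classes is minimal for speed at least the Bell number is routine but, in the interest of brevity, is best imported from \cite{Jump} rather than redone.
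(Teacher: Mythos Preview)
Your proposal is correct and follows essentially the same route as the paper. The paper does not give a separate proof of this theorem at all: it presents the statement as a direct summary of the preceding discussion, namely the equivalence (from \cite{SpHerProp} and \cite{Jump}) between bounded uniformicity and speed below the Bell number, together with Theorem~\ref{thm:minimal-bell} for the finite-distinguishing-number case and the list of 13 classes from \cite{Jump} for the infinite case. Your write-up simply unpacks this same reasoning more carefully, including the small observation (left implicit in the paper) that every class ${\cal P}(w,K)$ has finite distinguishing number, so that the ``if'' direction of Theorem~\ref{thm:minimal-bell} is indeed applicable.
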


With the restriction to bipartite permutation graphs, distinguishing number and uniformicity can be characterized in terms minimal classes as follows.

\begin{theorem} 
In the universe of bipartite permutation graphs, the list of minimal hereditary subclasses of unbounded distinguishing number consists of star forests and chain graphs, 
and the list of minimal hereditary subclasses of unbounded uniformicity consists of star forests, chain graphs and linear forests. 
\end{theorem}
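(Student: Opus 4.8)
The strategy is to intersect the two global classifications recalled above with the class ${\cal BP}$, using the forbidden-induced-subgraph description ${\cal BP}=\Free(S_{2,2,2},\Sun_3,\Phi,C_3,C_5,C_6,\ldots)$. The point is that a \emph{minimal} hereditary subclass of ${\cal BP}$ of unbounded distinguishing number (respectively uniformicity) must, by completeness of the corresponding global classification, contain one of the globally minimal classes, and that class is then itself a subclass of ${\cal BP}$. So it suffices to single out which entries of each global list lie inside ${\cal BP}$, and then to check that the surviving classes are pairwise incomparable under inclusion, so that each is genuinely minimal within ${\cal BP}$ and no others occur.

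For the distinguishing number one inspects the thirteen global minimal classes. Each of the eleven that are neither the chain graphs nor the star forests contains a triangle, hence does not lie in ${\cal BP}$: this is immediate for the class of disjoint unions of cliques, for the threshold graphs, and for the three classes obtained from star forests by adding cliques, while the remaining classes are complements (of these, or of chain graphs, or of star forests) and likewise contain triangles — for instance the complement of the chain graphs contains complete graphs of every order. On the other hand, chain graphs and star forests do lie in ${\cal BP}$ (Figure~\ref{fig:inclusion}) and are incomparable, $P_4$ being a chain graph but not a star forest and $2K_2$ a star forest but not a chain graph. So chain graphs and star forests are exactly the minimal subclasses of ${\cal BP}$ of unbounded distinguishing number.

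For uniformicity the thirteen distinguishing-number classes again contribute precisely chain graphs and star forests, and the real work is to decide which classes of folded linear forests ${\cal P}(w,K)$ lie in ${\cal BP}$. I claim ${\cal P}(w,K)\subseteq{\cal BP}$ if and only if $U(w,K)$ is an infinite path, i.e.\ if and only if ${\cal P}(w,K)$ is the class of linear forests; the ``if'' direction is immediate. For the converse, delete from $K$ all letters not occurring in $w$ (this changes neither $U(w,K)$ nor ${\cal P}(w,K)$), so that every letter of $K$ occurs in $w$ and hence, by almost periodicity, every factor of $w$ recurs with bounded gaps, in particular infinitely often. Assuming $E(K)\neq\emptyset$, I exhibit a forbidden induced subgraph of ${\cal BP}$ inside $U(w,K)$, in three cases. (a) If $K$ has a loop at $x$, then three widely separated occurrences of $x$ induce a $C_3$. (b) If $K$ has no loop but some edge $xy\in E(K)$ occurs as a factor of $w$, then three widely separated occurrences of that factor induce $K_{3,3}$ minus a perfect matching, which is a $C_6$. (c) If $K$ has no loop and no edge of $K$ occurs as a factor of $w$, then consecutive positions are always adjacent in $U(w,K)$; picking a ``far edge'' $\{i,i+d\}$ (meaning $d\geq2$ and $w_iw_{i+d}\in E(K)$) with $d$ minimum, minimality forces the positions $i,\dots,i+d$ to induce exactly a $C_{d+1}$ (a path completed by the edge $\{i,i+d\}$), which settles $d=2$ and $d\geq4$. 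The remaining subcase $d=3$, where only a $C_4$ appears and is allowed, is handled separately: writing $w_iw_{i+d}=xy$ (with $x\neq y$, since there are no loops) and $z:=w_{i+2}$, minimality of $d$ gives $xz\notin E(K)$ and the absence of factor-edges gives $zy\notin E(K)$, whence $z\notin\{x,y\}$; then an occurrence $p$ of $x$ together with three widely separated occurrences of the factor $zy$ — at positions $(m_t,q_t)$ with $w_{m_t}=z$, $w_{q_t}=y$, pairwise far apart and far from $p$ — induces exactly $S_{2,2,2}$ on $\{p,q_1,q_2,q_3,m_1,m_2,m_3\}$ (centre $p$, legs $p\,q_t\,m_t$). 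In every case ${\cal P}(w,K)\not\subseteq{\cal BP}$, so the class of linear forests is the only class of folded linear forests contained in ${\cal BP}$.

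Putting the pieces together gives that, within ${\cal BP}$, the minimal classes of unbounded distinguishing number are chain graphs and star forests, and the minimal classes of unbounded uniformicity are chain graphs, star forests and linear forests; for the last statement one also uses that linear forests is incomparable with the other two ($K_{1,3}$ belongs to chain graphs and to star forests but not to the class of linear forests) and has unbounded uniformicity (being a class of folded linear forests) even though its distinguishing number is bounded. The hard part will be the case analysis in the folded-linear-forests step, and in particular the subcase $d=3$, where no short induced cycle is available and one must instead produce an induced $S_{2,2,2}$.
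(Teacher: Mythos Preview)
Your proof is correct. The paper states this theorem without proof, treating it as an immediate restriction of the two global classifications recalled just before it. For the distinguishing-number half, eliminating the eleven non-bipartite global minimal classes via triangles is indeed routine and is presumably what the paper has in mind. For uniformicity, however, the claim that linear forests is the \emph{only} folded-linear-forest class ${\cal P}(w,K)$ lying inside ${\cal BP}$ is not addressed by the paper at all, and your case analysis---loops yield $C_3$, factor-edges of $K$ yield $C_6$, and otherwise a minimum-distance far edge yields either a forbidden cycle $C_{d+1}$ or (in the delicate $d=3$ subcase) an induced $S_{2,2,2}$---supplies a genuine argument that the paper simply omits. The $S_{2,2,2}$ construction in the $d=3$ subcase is correct: with $x=w_i$, $y=w_{i+3}$, $z=w_{i+2}$, minimality of $d$ forces $xz\notin E(K)$, case~(c) forces $zy\notin E(K)$, and since $xy\in E(K)$ these together force $z\notin\{x,y\}$; the seven chosen vertices then have exactly the six edges $p\,q_t$ and $q_t\,m_t$.

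Your argument does rely on the completeness of the global lists (that every hereditary class of unbounded distinguishing number, respectively uniformicity, contains one of the listed minimal classes), but this is precisely the content of the Bell-number characterisation from \cite{Jump,Bell} that the paper cites, so you are using it in the same way the paper does. The incomparability checks at the end are in fact automatic---global minimal classes are already pairwise incomparable---but including them does no harm.
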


\subsection{Lettericity and Parikh word representability}
\label{sec:lettericity}

Parikh word representable graphs were introduced in \cite{Parikh} as follows. Let $A=\{a_1<a_2<\ldots <a_k\}$ be an ordered alphabet,
and let $w=w_1w_2\ldots w_n$ be a word over $A$. The \emph{Parikh graph} of $w$ has $\{1,2,\ldots,n\}$ as its vertex set
and two vertices $i < j$ are adjacent if and only if there is $p\in \{1,2,\ldots,k-1\}$, such that $w_i=a_p$ and $w_j=a_{p+1}$.

In \cite{Parikh-bpg}, it was shown that the class of Parikh word representable graphs coincides with the class of bipartite permutation graphs
and was conjectured that every bipartite permutation graph with $n$ vertices admits a Parikh word representation over an alphabet of $\lfloor \frac{n}{2}\rfloor +1$ letters.
In this section, we prove the conjecture. To this end, we use the related notion of letter graphs, which was introduced in \cite{Pet02} and can be defined as follows.

As before, $A$ is a finite alphabet, but this time we do not assume any order on the letters of $A$. 
Let $D$  be a subset of $A^2$ and $w=w_1w_2\ldots w_n$ a word over $A$. 
The \emph{letter graph} $G(D,w)$ associated with $w$ has $\{1,2,\ldots,n\}$ as its vertex set, 
and two vertices $i < j$ are adjacent if and only if the ordered pair $(w_i,w_j)$ belongs to $D$. 
A graph $G$ is said to be a \emph{letter graph} if there exist an alphabet $A$, a subset $D\subseteq A^2$ and a word $w=w_1w_2\ldots w_n$ over $A$ such that $G$ is isomorphic to $G(D,w)$.

The role of $D$ is to decode (transform) a word into a graph and therefore we refer to $D$ as a decoder.
Every graph $G$ is trivially a letter graph over the alphabet $A = V(G)$ with the decoder $D = \{(v, w), (w, v): \{v, w\} \in E(G)\}$.
The \emph{lettericity} of $G$, denoted by $\ell(G)$, is the minimum $k$ such that $G$ is representable as a letter graph over an alphabet of $k$ letters.

To give a less trivial example, consider the alphabet $A=\{a,b\}$ and the decoder $D=\{(a,b)\}$.  Then, the word $ababababab$
describes the graph represented in Figure~\ref{fig:T5}. Clearly, this is a chain graph. Moreover, a chain graph of this form with $n$ vertices
in each part, which we denote by $Z_n$, is $n$-universal, i.e., it contains every $n$-vertex chain graph as an induced subgraph, as was shown in \cite{universal}.
Therefore a graph is a chain graph if and only if it is a letter graph over the alphabet $A=\{a,b\}$ with the decoder $D=\{(a,b)\}$.
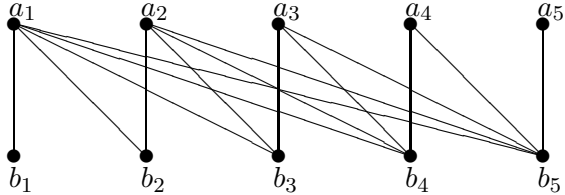
\begin{figure}[ht]
\begin{center}
\begin{picture}(300,60)
\put(50,0){\circle*{5}}
\put(100,0){\circle*{5}}
\put(150,0){\circle*{5}}
\put(200,0){\circle*{5}}
\put(250,0){\circle*{5}}
\put(48,53){$a_1$}
\put(98,53){$a_2$}
\put(148,53){$a_3$}
\put(198,53){$a_4$}
\put(248,53){$a_5$}



\put(48,-12){$b_1$}
\put(98,-12){$b_2$}
\put(148,-12){$b_3$}
\put(198,-12){$b_4$}
\put(248,-12){$b_5$}
\put(50,50){\circle*{5}}
\put(100,50){\circle*{5}}
\put(150,50){\circle*{5}}
\put(200,50){\circle*{5}}
\put(250,50){\circle*{5}}

\put(50,0){\line(0,1){50}}
\put(250,0){\line(-1,1){50}}
\put(250,0){\line(-2,1){100}}
\put(250,0){\line(-3,1){150}}
\put(250,0){\line(-4,1){200}}
\put(100,0){\line(0,1){50}}
\put(200,0){\line(-1,1){50}}
\put(200,0){\line(-2,1){100}}
\put(200,0){\line(-3,1){150}}
\put(150,0){\line(0,1){50}}
\put(150,0){\line(-1,1){50}}
\put(150,0){\line(-2,1){100}}
\put(200,0){\line(0,1){50}}
\put(100,0){\line(-1,1){50}}
\put(250,0){\line(0,1){50}}

\end{picture}
\end{center}
\caption{The letter graph of the word $ababababab$.
We use indices to indicate in which order the $a$-letters and the $b$-letters appear in the word.}
\label{fig:T5}
\end{figure}

The $n$-universal bipartite permutation graph $H_{n,n}$ (Figure~\ref{fig:H55}) can be viewed as a sequence 
of $n$ copies of the $n$-universal chain graphs and hence it
can be expressed as a letter graph on letters $a_1, \dots, a_n$ with decoder $\{(a_i, a_{i + 1}): 1 \leq i \leq n - 1\}$. The word representing $H_{n,n}$ consists of
the concatenation of $n$ copies of $a_1a_2\ldots a_n$. The same word represents $H_{n,n}$ as a Parikh graph over the ordered alphabet $A=\{a_1<a_2<\ldots <a_n\}$.
This discussion provides an alternative proof of the fact that the class of Parikh word representable graphs is precisely the class of bipartite permutation graphs.

\begin{theorem}\label{thm:lettericity-bound}
Let $G$ be a bipartite permutation graph with $n$ vertices. Then $G$ has lettericity bounded above by $\lfloor\frac{n}{2}\rfloor + 1$.
\end{theorem}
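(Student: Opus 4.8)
The plan is to produce, directly from the ``staircase'' presentation of $G$, a word $w$ together with a decoder $D$ over an alphabet of size at most $\lfloor n/2\rfloor+1$ whose letter graph is $G$. Recall the structure behind Figure~\ref{fig:H55}: when $G$ is connected its colour classes $X=\{x_1,\dots,x_p\}$ and $Y=\{y_1,\dots,y_q\}$ (with $p+q=n$) can be ordered so that each neighbourhood $N(x_i)$ is an interval $[\alpha_i,\beta_i]$ of $Y$ with $\alpha_1\le\dots\le\alpha_p$ and $\beta_1\le\dots\le\beta_p$, and symmetrically for $Y$; a disconnected graph is a union of such blocks. Relabelling if necessary so that $p\le q$, we then have $p\le\lfloor n/2\rfloor$.

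The alphabet is obtained by \emph{pairing consecutive vertices inside each colour class}: split $X$ into the pairs $\{x_1,x_2\},\{x_3,x_4\},\dots$ (leaving $x_p$ as a singleton when $p$ is odd), do the same for $Y$, and assign one letter to each resulting group. This costs $\lceil p/2\rceil+\lceil q/2\rceil$ letters, and a short check on the parities of $p$ and $q$ gives $\lceil p/2\rceil+\lceil q/2\rceil\le\lfloor(p+q)/2\rfloor+1=\lfloor n/2\rfloor+1$. Every group lies in one colour class, hence is independent, so no letter needs a loop in $D$. The substance of the argument is to arrange the $n$ vertices into a word $w$, and to choose $D$, so that the two vertices of each group may genuinely share a letter. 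Here one uses the elementary fact that if $u$ and $v$ occupy positions $i<j$ of $w$ and carry the same letter $a$ (with $(a,a)\notin D$, so $u\not\sim v$), then any vertex to the left of $i$ or to the right of $j$ is adjacent to both or to neither of $u,v$; consequently every vertex of $N(u)\triangle N(v)$ must be slotted strictly between $u$ and $v$. For a pair $\{x_i,x_{i+1}\}$ the monotonicity of the intervals gives $N(x_i)\triangle N(x_{i+1})=[\alpha_i,\alpha_{i+1}-1]\cup[\beta_i+1,\beta_{i+1}]$: two short blocks of $Y$ sitting near the two ends of the union of the two intervals. So the word order on $Y$ cannot be the staircase order — it must fold each such pair of ``slivers'' next to each other while keeping the common part $[\alpha_{i+1},\beta_i]$ outside the window between $x_i$ and $x_{i+1}$, and symmetrically for the $Y$-pairs. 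I would build $w$ by interleaving $X$ and $Y$ according to the staircase and then performing these foldings simultaneously, placing the singletons and the common parts in the gaps the foldings open up; $D$ is then read off one ordered pair of letters at a time, the consistency of this reading (no pair forced both into and out of $D$) being exactly the statement that two equally-lettered vertices on the same side of a third vertex agree on adjacency to it — which the folding is designed to enforce.

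The step I expect to be the real obstacle is the construction of this single linear order of $V(G)$: it must satisfy the ``$N(u)\triangle N(v)$ strictly in the middle'' requirement \emph{simultaneously} for all groups — those inside $X$, those inside $Y$, and, in the disconnected case, any group straddling two components — and the interleaving of $X$ with $Y$ has to be chosen so that a $Y$-sliver that must lie between $x_i$ and $x_{i+1}$ is not forced elsewhere by the $Y$-pair it belongs to. I expect the total monotonicity of the staircase to reduce this to a bounded amount of bookkeeping, and that organising it as an induction on $n$ — deleting the final pair of $X$ and the final pair of $Y$, applying the bound to the resulting $(n-4)$-vertex bipartite permutation graph, and re-inserting each deleted pair with one fresh letter — will make the compatibility conditions easiest to verify, provided the inductive hypothesis is strengthened to record the ``staircase-compatibility'' of the word it produces.
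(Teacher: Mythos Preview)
Your counting is fine, and the reformulation --- ``$N(u)\triangle N(v)$ must lie strictly between the two occurrences of the shared letter'' --- is exactly the right constraint. But you have correctly located the entire difficulty and then not resolved it: the proposal never exhibits a linear order on $V(G)$ satisfying all these constraints simultaneously, nor a decoder consistent with it. The sketch (``interleave according to the staircase and then fold'') is not a construction, and the inductive step you allude to (delete the last two $X$-vertices and the last two $Y$-vertices, add two fresh letters) is not obviously sound: re-inserting a pair $\{x_{p-1},x_p\}$ with a new letter forces every old $Y$-letter $b$ to take a single value on $(a_{\mathrm{new}},b)$ and on $(b,a_{\mathrm{new}})$, and nothing in the hypothesis guarantees the existing word can be extended so that those two bits encode four potentially distinct adjacencies per $Y$-group. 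So as written this is a plan, not a proof.

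The paper takes a completely different and much shorter route that avoids building anything. It fixes the decoder once and for all to the Parikh decoder $D=\{(a_i,a_{i+1}):1\le i\le r-1\}$ on a linearly ordered alphabet; we already know every bipartite permutation graph has \emph{some} representation with this decoder, since $H_{n,n}$ does. Among all such representations of $G$, choose one minimising the sum of the letter indices. A short extremal argument then shows that in this minimal word every letter $a_2,\dots,a_{r-1}$ occurs at least twice (the key move: a rightmost $a_t$ with no $a_{t-1}$ to its right can be deleted and an $a_{t-2}$ prepended, contradicting minimality), while $a_1$ and $a_r$ occur at least once. Hence $n\ge 2(r-2)+2$, i.e.\ $r\le n/2+1$. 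Disconnected graphs are handled by concatenating the component words with a one-letter overlap. This argument is not only shorter but proves the stronger statement that the bound already holds for the restricted Parikh decoder, which is what the Parikh-representability conjecture in \cite{Parikh-bpg} actually asks for; your approach, using an arbitrary decoder, would not yield that even if completed.
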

	
\begin{proof}
We first deal with the case when $G$ is connected, and assume $n \geq 2$.
We know that $G$ can be expressed as a letter graph on letters $a_1, \dots, a_n$ with decoder $\{(a_i, a_{i + 1}): 1 \leq i \leq n - 1\}$.
		
Among all expressions $w_1w_2\dots w_n$ with that decoder, writing $l(j)$ for the index of the letter in position $j$ of the word,
pick one that minimises $\sum\limits_{j = 1}^n l(j)$ (i.e., an expression that minimises the sum of the indices of the letters in $w$).
Let us state some properties of this expression:
\begin{itemize}
\item $a_1$ appears at least once in $w$. If not, we can shift all indices down by 1.
\item The last letter is $a_2$. Indeed, the last letter cannot be $a_1$, since that would mean $G$ has an isolated vertex.
If the last letter is $a_t$, for some $t \geq 3$, we can remove it, and add an $a_{t - 2}$ at the beginning of $w$: this new expression still represents $G$, but the sum of indices is smaller.
\item Let $t \geq 2$, and let $w_j = a_t$ be the rightmost appearance of $a_t$ in $w$.
Then there is an $a_{t - 1}$ to the right of $w_j$ in $w$.
Otherwise, as before, we can (by going more to the right if necessary) find an $a_t$ to the right of every $a_{t - 1}$ and of every $a_{t + 1}$,
which can be replaced by an $a_{t - 2}$ in the beginning of the word.
It follows that the rightmost occurrence of $a_t$ has to its right at least one $a_i$ for each $2 \leq i \leq t$, and no $a_i$ for $i > t$.
\item Let $t \geq 2$, and let $w_j = a_t$ be the rightmost occurrence of $a_t$.
Then, there is an $a_{t - 1}$ to the left of $a_t$.
Indeed, since $G$ is connected, the vertex $j$ has a neighbour.
But as seen above, there are no $a_{t + 1}$s to the right of $w_j$, hence that neighbour must be an $a_{i - 1}$ to its left.
\end{itemize}
	
Putting $r := \max_{1 \leq j \leq n} l(j)$, the above discussion implies that $w$ uses letters $2, \dots, r - 1$ at least twice, and letters $1, r$ at least once.
Since $G$ has $n$ vertices, this implies $r \leq \frac{n}{2} + 1$.
		
If $G$ is disconnected, writing $G_i$, $1 \leq i \leq s$ for its connected components,
we can produce words $w(G_i)$ as above for each $G_i$, where $G_1$ uses letters 1 to $a_{r_1}$, $G_2$ uses letters $a_{r_1}$ to $a_{r_2}$, and so on.
We then obtain a word representing $G$ by concatenating $w(G_s), w(G_{s - 1}), \dots, w(G_1)$ in that order.
The resulting word uses once more each letter twice, except for possibly the first and the last one.
\end{proof}

The upper bound in Theorem~\ref{thm:lettericity-bound} is tight and attained on graphs of vertex degree at most 1 (see \cite{Parikh-bpg} for arguments
given in the terminology of Parikh word representability or \cite{cographs} for arguments given in the terminology of lettericity). In the rest of
this section we show that, within the universe of bipartite permutation graphs, the class of graphs of vertex degree at most 1 is the only obstacle
for bounded lettericity, i.e., it is the unique minimal subclass of bipartite permutation graphs of unbounded lettericity.

\begin{theorem}
For each $p$, there is an $f(p)$ such that the lettericity of $pK_2$-free bipartite permutation graphs is at most $f(p)$.
\end{theorem}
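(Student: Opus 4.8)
The plan is to combine the letter-graph machinery from the proof of Theorem~\ref{thm:lettericity-bound} with the observation that a word over the ``path decoder'' $D=\{(a_i,a_{i+1})\}$ which genuinely uses many letters must encode a large induced matching. First I would reduce to the connected case. A $pK_2$-free graph has at most $p-1$ connected components containing an edge; the isolated vertices can all be assigned one fresh letter lying in no pair of $D$, and the nontrivial components can be processed separately and their words concatenated (sharing exactly one letter between consecutive components, precisely as at the end of the proof of Theorem~\ref{thm:lettericity-bound}, which introduces no edges between components). Hence a bound $g(p)$ on the lettericity of \emph{connected} $pK_2$-free bipartite permutation graphs yields a bound $f(p)=O(p\cdot g(p))$ in general.

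So let $G$ be connected with at least two vertices, the other cases being trivial. Write $G=G(D,w)$ with $D=\{(a_i,a_{i+1}):i\ge 1\}$ and $w$ the index-minimising word constructed in the proof of Theorem~\ref{thm:lettericity-bound}, and let $r$ be the largest index occurring in $w$, so $\ell(G)\le r$. From that proof we know every letter $a_1,\dots,a_r$ occurs in $w$ and, crucially (the last bullet there), that for each $t$ with $2\le t\le r$ there is an occurrence of $a_{t-1}$ strictly to the left of the rightmost occurrence of $a_t$. For each such $t$, let $e_t$ be the edge of $G$ joining that $a_{t-1}$-vertex to the rightmost $a_t$-vertex; it is indeed an edge because $(a_{t-1},a_t)\in D$ and the first vertex precedes the second in $w$.

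The key claim is that $\{e_t: t\equiv 0\pmod 3,\ 3\le t\le r\}$ is an induced matching, whence its size is at most $p-1$. Indeed, $e_t$ uses only the letters $a_{t-1}$ and $a_t$, so for $t<t'$ both divisible by $3$ the four endpoints of $e_t$ and $e_{t'}$ carry four distinct letters whose indices are pairwise non-consecutive apart from the pairs $\{t-1,t\}$ and $\{t'-1,t'\}$ themselves (the smallest remaining gap is $t'-1-t\ge 2$). Since edges of $G(D,w)$ occur only between letters with consecutive indices, the only edges among these four vertices are $e_t$ and $e_{t'}$, so the matching is induced. As $G$ is $pK_2$-free, its size $\lfloor r/3\rfloor$ is at most $p-1$, hence $r\le 3p-1$ and $\ell(G)\le 3p-1$. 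Feeding this back through the reduction gives the theorem with $f(p)=O(p^2)$.

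The routine points — that the concatenation in the disconnected case creates no edges between components, and the ``pairwise non-consecutive indices'' bookkeeping — are straightforward; the only place requiring the full strength of the earlier work is the extraction of the edges $e_t$, which is exactly where the minimality of the word $w$ is used. I expect the main remaining obstacle to be cosmetic: sharpening the constant (obtaining a linear bound $f(p)=O(p)$ would require reusing letters across components, which needs a slightly cleverer word arrangement or decoder), but for the statement as given the quadratic bound suffices.
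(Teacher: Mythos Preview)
Your argument is correct, and it parallels the paper's proof closely: both reduce to the connected case (at most $p-1$ nontrivial components), bound the lettericity of each connected component by roughly $3p$, and then combine for a bound of order $p^2$. The difference is in how the per-component bound is established. The paper simply observes that any connected induced subgraph of $H_{n,n}$ occupying at least $3p-1$ rows must contain an induced $P_{3p-1}$, which in turn contains an induced $pK_2$; this is a one-line geometric fact about the universal graph and does not refer back to the minimising-word argument. You instead lean on the internal structure of the index-minimising word from Theorem~\ref{thm:lettericity-bound} and extract an induced matching $\{e_t:t\equiv 0\pmod 3\}$ directly. Your route is a bit more elaborate because it depends on the details of that earlier proof (in particular the fourth bullet point guaranteeing an $a_{t-1}$ to the left of the rightmost $a_t$), whereas the paper's route is self-contained; on the other hand, your construction makes the witnessing induced matching completely explicit without the intermediate step through a long path. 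The resulting bounds are essentially identical.
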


\begin{proof}
Let $G$ be a $pK_2$-free bipartite permutation graph. Then $G$ has at most $p-1$ nontrivial connected components, i.e., components of
size at least 2. Each component can be embedded, as an induced subgraph, into the universal graph $H_{n,n}$ with at most $3p-2$ rows,
since any connected induced subgraph of the universal graph occupying at least $3p-1$ rows contains an induced $P_{3p-1}$ and, hence,
an induced $pK_2$. Therefore, any component of $G$ requires at most $3p-2$ letters to represent it. Altogether, we need at most $(p-1)(3p-2)+1$
letters to represent $G$.
\end{proof}

\subsection{Shrub-depth and related parameters}
\label{sec:shrub}

In this section, we analyse various width and depth parameters and characterize them in terms of minimal hereditary
classes of bipartite permutation graphs where these parameters are unbounded.

We start by repeating that the class of all bipartite permutation graphs is a minimal hereditary class of unbounded clique-width.
{\it Tree-width} is a restriction of clique-width in the sense that bounded tree-width implies bounded clique-width, but not necessarily vice versa.

\begin{proposition}
The class of complete bipartite graphs is the only minimal hereditary subclass of bipartite permutation graphs of unbounded tree-width.
\end{proposition}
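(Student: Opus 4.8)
The plan is to show the statement in two directions: first that complete bipartite graphs have unbounded tree-width, and second that this class is the \emph{only} minimal obstacle, i.e.\ every proper hereditary subclass of bipartite permutation graphs that avoids some $K_{n,n}$ has bounded tree-width, and conversely that inside $\mathcal{BP}$ the complete bipartite graphs form a minimal class of unbounded tree-width.

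For the ``unbounded'' part, I would recall that $K_{n,n}$ has tree-width $n$ (a standard fact: one can exhibit a bramble of order $n+1$, or simply note that $K_{n,n}$ contains $K_{n+1}$ as a minor, or invoke that $K_{n,n}$ has no balanced separator of size $<n$ since deleting fewer than $n$ vertices leaves a connected graph on one of the sides together with the surviving opposite vertices). Hence tree-width is unbounded along the sequence $(K_{n,n})_n \subseteq \mathcal{BP}$, so the class of complete bipartite graphs (being hereditary, as it equals $\Free(2K_2, P_3\text{-with-a-pendant}, \dots)$ --- more simply it is the class of bipartite graphs that are joins of two independent sets) genuinely is a subclass of $\mathcal{BP}$ of unbounded tree-width.

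For minimality, I would argue that any proper hereditary subclass $\mathcal{C}$ of the complete bipartite graphs is $K_{m,m}$-free for some $m$, hence has tree-width at most $m-1$: a $K_{m,m}$-free complete bipartite graph $K_{a,b}$ has $\min(a,b) \le m-1$, and then the $m-1$ vertices of the smaller side form a tree-decomposition of width $m-1$ (put all of them, plus one vertex of the large side, in each bag of a path). So within the complete bipartite graphs tree-width is bounded in every proper subclass. It remains to rule out any \emph{other} minimal subclass of $\mathcal{BP}$ of unbounded tree-width; equivalently, I must show that every $K_{n,n}$-free bipartite permutation graph (for fixed $n$) has bounded tree-width. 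This is the main obstacle. Here I would use the structure of $\mathcal{BP}$ via the universal graphs $H_{m,m}$: a connected bipartite permutation graph embeds into some $H_{m,m}$, and being $K_{n,n}$-free bounds the ``width'' of the band it occupies --- concretely, a connected induced subgraph of $H_{m,m}$ using more than roughly $2n$ consecutive columns (or rows) must contain a $K_{n,n}$, by the staircase structure of $H_{m,m}$ in which any window of $2n$ columns against the matching-like structure produces a biclique. Thus each connected component of a $K_{n,n}$-free graph in $\mathcal{BP}$ has bounded ``path-width-like'' structure: it is covered by a bounded-width interval-like decomposition along the columns, and one reads off a tree- (in fact path-) decomposition of width $O(n)$. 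Since tree-width of a disjoint union is the max over components, the whole graph has tree-width $O(n)$.

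Finally I would assemble these pieces: complete bipartite graphs are a subclass of $\mathcal{BP}$ of unbounded tree-width (Step~1), every proper hereditary subclass of them has bounded tree-width (Step~2), and every subclass of $\mathcal{BP}$ not containing all complete bipartite graphs --- i.e.\ $K_{n,n}$-free for some $n$ --- has bounded tree-width (Step~3), so no such subclass can be a minimal class of unbounded tree-width, and the complete bipartite graphs are the unique one. I expect Step~3, extracting the $O(n)$ bound on tree-width from $K_{n,n}$-freeness inside the universal graph $H_{m,m}$, to require the most care: the clean way is probably to observe that a $K_{n,n}$-free bipartite permutation graph has bounded \emph{bandwidth} in its natural vertex ordering (the permutation/interval order), and that bounded bandwidth implies bounded path-width hence bounded tree-width.
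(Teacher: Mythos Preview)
Your overall plan is correct and matches the paper's logical structure: show complete bipartite graphs have unbounded tree-width, show minimality within that class, and show that any subclass of $\mathcal{BP}$ avoiding some $K_{n,n}$ has bounded tree-width. The difference lies entirely in how Step~3 is executed.

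The paper's argument for Step~3 is a two-line citation chain, and it is worth knowing: by Courcelle and Olariu, for any class excluding $K_{n,n}$ as a (not necessarily induced) subgraph, tree-width is bounded by a function of $n$ and clique-width. Since $\mathcal{BP}$ is a \emph{minimal} hereditary class of unbounded clique-width (a fact the paper states in the introduction), any proper hereditary subclass of $\mathcal{BP}$ --- in particular any $K_{n,n}$-free one --- has bounded clique-width, and hence bounded tree-width. No structural analysis of $H_{m,m}$ is needed.

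Your direct route, by contrast, has genuine gaps as written. The bandwidth claim is false: the star $K_{1,m}$ is a $K_{2,2}$-free bipartite permutation graph with bandwidth $\lceil m/2\rceil$, so $K_{n,n}$-freeness does not bound bandwidth in the natural ordering. The column-counting argument is also not right as stated: a long induced path in $H_{m,m}$ can occupy arbitrarily many consecutive columns (and rows) while remaining $K_{2,2}$-free, so ``using many columns'' alone does not force a large biclique. What one would actually need is something like bounded \emph{path-width}, and extracting that directly from $K_{n,n}$-freeness plus the staircase structure is doable but requires a more careful argument than the sketch you gave (essentially one must bound, at each position of the natural scan, the number of vertices whose neighbourhood interval straddles that position --- and that bound comes from a biclique, not from column count or bandwidth). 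So your approach is salvageable in principle, but the paper's clique-width shortcut avoids all of this work.
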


\begin{proof}
It is well-known that tree-width can be arbitrarily large for complete bipartite graphs. On the other hand,
it was shown in \cite{cw-bound} that for graphs that do not contain $K_{n,n}$ as a subgraph, tree-width is bounded by a function of $n$ and its clique-width. 
Therefore, for every subclass of bipartite permutation
graphs excluding at least one complete bipartite graph tree-width is bounded, since clique-width is bounded.
\end{proof}

{\it Tree-depth} is a restriction of tree-width in the sense that bounded tree-depth implies bounded tree-width, but not necessarily vice versa.
Let us define the {\it path number} of $G$ to be the length of a longest (not necessarily induced) path in $G$.
In the terminology of minimal classes, the two parameters are equivalent, as shown in the following proposition.

\begin{proposition}
The classes of complete graphs, complete bipartite graphs and linear forests are the only three minimal hereditary classes of graphs of unbounded tree-depth and path number.
In the universe of bipartite permutation graphs, complete bipartite graphs and linear forests are the only two minimal hereditary classes of graphs of unbounded tree-depth and path number.
\end{proposition}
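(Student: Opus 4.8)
The plan is to pass from tree-depth to the path number and then run a Ramsey-type dichotomy. The first step is the observation that, on hereditary classes, unbounded tree-depth and unbounded path number are the same condition: if a longest path of $G$ has $\ell$ vertices, then its tree-depth lies between $\lceil\log_2(\ell+1)\rceil$ and $\ell$ — the upper bound by the standard DFS argument, the lower bound because $P_\ell$ has tree-depth $\lceil\log_2(\ell+1)\rceil$ and tree-depth is monotone under subgraphs. Hence a hereditary class has unbounded tree-depth if and only if it has unbounded path number, and the two parameters determine the same poset of ``unbounded'' classes; it therefore suffices to work with the path number throughout.

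The engine is the Ramsey-type fact that for every $k$ there is an $N(k)$ such that every graph containing a path on $N(k)$ vertices contains $K_k$, $K_{k,k}$ or $P_k$ as an \emph{induced} subgraph. (This is proved by applying Ramsey's theorem to the vertex set of a long path to obtain either an induced $K_k$ or a long independent set spread along the path, and then, in the second case, extracting by a more delicate argument either a large induced biclique or a long induced path; it also appears in the literature on long induced paths and bicliques.) Granting this, let $\mathcal X$ be a hereditary class of unbounded path number. For every $k$ some member of $\mathcal X$ has a path on $N(k)$ vertices, so $\mathcal X$ contains at least one of $K_k,K_{k,k},P_k$. Were none of the three options to hold for all $k$, then for $k$ exceeding the three thresholds at which they respectively fail, $\mathcal X$ would contain none of $K_k,K_{k,k},P_k$, a contradiction. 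Hence $\mathcal X$ contains $K_k$ for every $k$, or $K_{k,k}$ for every $k$, or $P_k$ for every $k$; passing to the hereditary closure, $\mathcal X$ contains the class of complete graphs, or the class of complete bipartite graphs, or the class of linear forests (using that every linear forest is an induced subgraph of a single sufficiently long path).

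It then remains to verify that these three classes are minimal and pairwise incomparable. Each has unbounded path number, since $K_n$, $K_{n,n}$ and $P_n$ all have Hamiltonian paths. A proper hereditary subclass of the complete graphs omits some $K_s$, so its members have fewer than $s$ vertices; a proper hereditary subclass of the complete bipartite graphs omits some $K_{s,s}$, hence every $K_{a,b}$ with $a,b\ge s$, so its members have path number less than $2s$; a proper hereditary subclass of the linear forests omits some linear forest and therefore, by the embedding remark above, some $P_k$, so all its components have fewer than $k$ vertices. In each case the path number is bounded, so all three classes are minimal; incomparability is witnessed by $K_3$, $K_{2,2}$ and $P_4$, each of which belongs to exactly one of the three classes. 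Together with the previous paragraph this shows that these are precisely the minimal hereditary classes of unbounded tree-depth and path number. For the restriction to $\mathcal{BP}$, note that $\mathcal{BP}$ is triangle-free, so a hereditary subclass of $\mathcal{BP}$ of unbounded path number cannot contain all complete graphs and hence must contain the complete bipartite graphs or the linear forests; both of these lie in $\mathcal{BP}$ (the linear forests by Figure~\ref{fig:inclusion}, and $K_{a,b}$ is readily seen to avoid all forbidden induced subgraphs of $\mathcal{BP}$), and both are minimal and incomparable by the above, so within $\mathcal{BP}$ there are exactly two minimal classes.

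The main obstacle is the Ramsey-type dichotomy, and within it the step that produces a long \emph{induced} path — rather than merely a long path, or a large biclique — from a long path in a graph with no large clique and no large biclique; the rest is the hereditary-closure bookkeeping and the elementary path-number bounds in the three candidate classes.
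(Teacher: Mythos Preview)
Your proof is correct and follows essentially the same approach as the paper: both rely on the equivalence between unbounded tree-depth and unbounded path number on hereditary classes, and on the Ramsey-type trichotomy (long path forces induced $K_k$, $K_{k,k}$, or $P_k$), which the paper cites as a result of Atminas, Lozin and Razgon. The paper's proof stops at ``the proposition then follows'' after invoking these two facts, whereas you carry out the remaining bookkeeping explicitly --- verifying minimality of each of the three classes, their pairwise incomparability, and the reduction to two classes inside $\mathcal{BP}$ via triangle-freeness --- which is a welcome level of detail.
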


\begin{proof}
It is known \cite{tree-depth} that tree-depth is unbounded in a class $\cal X$ if and only if graphs in $\cal X$ contain arbitrarily long paths as subgraphs.
Also, it was shown in \cite{Razgon} that for every $t, p, s$, there exists a $z=z(t,p,s)$,
such that every graph with a (not necessarily induced) path of length at least $z$ contains either an induced path of length $t$
or an induced complete bipartite graph with color classes of size $p$ or a clique of size $s$.
The proposition then follows.
\end{proof}

{\it Shrub-depth} is an extension of tree-depth for dense graphs (see \cite{shrub} for the original definition). {\it Rank-depth} is a related parameter, which is equivalent to shrub-depth in the following sense.

\begin{theorem}{\rm \cite{rank-depth}}
A class of graphs has bounded rank-depth if and only if it has bounded shrub-depth.
\end{theorem}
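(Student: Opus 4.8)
Since this is a result of \cite{rank-depth}, we only outline the strategy. Recall that shrub-depth is defined through tree-models of bounded height using boundedly many labels, and that (see \cite{shrub}) a class has bounded shrub-depth if and only if it has bounded \emph{SC-depth}, i.e.\ is contained in $\mathrm{SC}_d$ for some fixed $d$, where $\mathrm{SC}_0$ consists of the graphs on at most one vertex and $\mathrm{SC}_{i+1}$ consists of those graphs $G$ admitting a subset $X\subseteq V(G)$ such that every connected component of $G\triangle X$ lies in $\mathrm{SC}_i$ (here $G\triangle X$ toggles the adjacency of every pair of vertices inside $X$). Rank-depth, in turn, is the depth analogue of rank-width, built from rank-decompositions in which the governing quantity is the $\mathrm{GF}(2)$ cut-rank $\mathrm{cutrk}_G$. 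The plan is to establish, for suitable functions $f$ and $g$, the two inequalities $\mathrm{rd}(G)\le f(\mathrm{sd}(G))$ and $\mathrm{sd}(G)\le g(\mathrm{rd}(G))$ for every graph $G$, the latter via SC-depth; boundedness of one parameter over a class then forces boundedness of the other.

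For the direction ``bounded shrub-depth $\Rightarrow$ bounded rank-depth'', I would start from a tree-model of $G$ of height $d$ using $m$ labels and read off a rank-decomposition whose underlying tree is essentially the model tree, hence of depth $O(d)$. The key observation is that each node $t$ of the model tree splits $V(G)$ into a pair $(A_t,B_t)$ for which the neighbourhood of any vertex of $A_t$ inside $B_t$ is determined by finitely many label-type parameters depending only on $d$ and $m$; hence $\mathrm{cutrk}_G(A_t)$ is bounded in terms of $d$ and $m$, and the resulting decomposition witnesses $\mathrm{rd}(G)\le f(d,m)$.

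For the harder direction ``bounded rank-depth $\Rightarrow$ bounded SC-depth'', I would induct on $k=\mathrm{rd}(G)$, imitating the passage from bounded rank-width to bounded clique-width. Fix a rank-decomposition of $G$ of depth $k$ and let $(A,B)$ be a bipartition cut near the top, so $\mathrm{cutrk}_G(A)\le k$. Writing the $A\times B$ adjacency matrix as a sum of at most $k$ rank-one $\mathrm{GF}(2)$-matrices $\mathbf{1}_{S_i}\mathbf{1}_{T_i}^{\top}$ with $S_i\subseteq A$ and $T_i\subseteq B$, one sees that $k$ subset-complementations, namely by the sets $S_i\cup T_i$, turn $G$ into a graph in which $A$ and $B$ are separated. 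One then recurses on the two sides, each carrying a decomposition of smaller depth, and amalgamates the subset-complementations so produced with the $k$ used at the top. Invoking the SC-depth/shrub-depth equivalence gives $\mathrm{sd}(G)\le g(k)$ with a label bound depending on $k$ alone.

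The main obstacle is precisely the parameter control in this last step. Subset-complementations carried out at different levels of the recursion do not commute, and a careless composition inflates the SC-depth (equivalently, multiplies the number of labels) at each of the $\Theta(k)$ recursion levels, besides re-introducing edges across cuts that have already been processed; moreover the complementations performed at the top corrupt the induced subgraphs $G[A]$ and $G[B]$, so the naive recursion is not on a strictly smaller parameter. Overcoming this requires organising the recursion so that the cutting subsets at each level interact predictably with the decomposition further down, so that the terminal SC-depth and, crucially, the number of labels are bounded by a function of $k$ that is independent of $|V(G)|$. That uniform bookkeeping is where the real work of \cite{rank-depth} lies.
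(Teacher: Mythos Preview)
The paper does not prove this theorem at all: it is stated as a citation of \cite{rank-depth} and used as a black box, with no accompanying argument. There is therefore nothing in the paper's own proof to compare your sketch against. Your write-up goes well beyond what the paper does, which is simply to quote the result.

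As a side remark on the sketch itself: the easy direction is fine, and your instinct that the hard direction is where the content lies is correct. But your outlined recursion is not quite the argument of \cite{rank-depth}. That paper defines rank-depth via decompositions whose width is controlled by the maximum cut-rank over all \emph{radius-$r$ balls} in the decomposition tree, not by a single top-level cut, and the equivalence with shrub-depth is obtained by showing both parameters are equivalent to an intermediate one (related to vertex-minor obstructions and to shrub-depth's characterisation through transductions of bounded-height trees), rather than by a direct SC-depth induction of the shape you describe. Your honest acknowledgement that the bookkeeping you propose does not obviously terminate with bounds independent of $|V(G)|$ is exactly right: as written, the recursion is not a proof but a heuristic, and the actual argument in \cite{rank-depth} is organised differently to avoid precisely the non-commutativity issues you flag.
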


Our next result characterizes both parameters in the terminology of minimal classes within the universe of bipartite permutation graphs.

\begin{theorem}\label{thm:shrub-depth}
The classes of chain graphs and linear forests are the only two minimal hereditary subclasses of bipartite permutation graphs of unbounded shrub-depth and rank-depth.
\end{theorem}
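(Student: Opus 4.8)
The plan is to prove the statement in two halves: first, that both chain graphs and linear forests have unbounded shrub-depth (equivalently, unbounded rank-depth); and second, that every proper hereditary subclass of bipartite permutation graphs that contains neither chain graphs nor linear forests has bounded shrub-depth. By the cited equivalence of rank-depth and shrub-depth it suffices to work with one of the two parameters throughout; I would work with rank-depth, as it is the more convenient to estimate via the underlying $H_{n,n}$ structure.

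For the lower bounds, I would observe that the universal chain graph $Z_n$ (Figure~\ref{fig:T5}) and the path $P_n$ (as the generic linear forest) have rank-depth tending to infinity with $n$. For $P_n$ this is standard and can also be derived from the fact that tree-depth is unbounded on long paths together with the inequality relating rank-depth and tree-depth. For chain graphs, the cleanest route is to note that $Z_n$ contains an induced half-graph of order $n$, and a direct argument on the ``staircase'' adjacency matrix shows its cut-rank decompositions cannot be shallow; alternatively one invokes the known fact that half-graphs (equivalently, chain graphs) witness unbounded shrub-depth, which is part of the motivating folklore around these parameters. Since both classes are hereditary and contain members of arbitrarily large rank-depth, each is a candidate minimal class; minimality itself will follow from the upper bound in the second half applied to any proper subclass.

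For the upper bound, let $\cal X$ be a hereditary subclass of $\cal BP$ containing neither all chain graphs nor all linear forests. Being chain-graph-free, $\cal X$ must exclude some $Z_p$, hence (since $Z_p$ is $p$-universal for chain graphs) every graph in $\cal X$ is $kK_2$-free within each part's neighbourhood structure — more usefully, excluding a fixed chain graph forces a bound on the ``width'' of the staircase, i.e.\ there is a constant $c_1$ so that in the $H_{n,n}$-embedding of any $G\in\cal X$, no connected piece spans more than $c_1$ consecutive columns. Being linear-forest-free, $\cal X$ excludes some $P_q$, so by the structure of $H_{n,n}$ no connected induced subgraph occurring in a graph of $\cal X$ can occupy more than a bounded number $c_2$ of rows (a long vertical run in $H_{n,n}$ yields a long induced path). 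Combining these two bounds, every connected component of every $G\in\cal X$ embeds into a fixed finite graph $H_{c,c}$ with $c=c(\cal X)$; since a graph's rank-depth is the maximum of the rank-depths of its components, and each component lies in a finite set of graphs, rank-depth on $\cal X$ is bounded by a constant. Shrub-depth is then bounded by the equivalence theorem, and minimality of the two listed classes follows because any proper hereditary subclass of either one is, in particular, a subclass of $\cal BP$ missing one of chain graphs / linear forests, hence has bounded shrub-depth.

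The main obstacle I anticipate is pinning down the ``bounded number of consecutive columns'' consequence of forbidding a fixed chain graph cleanly enough to feed into the $H_{n,n}$ embedding: one must argue that chain-graph-freeness is not merely about induced $2K_2$'s (which are present throughout $\cal BP$) but about a genuinely bounded-size half-graph, and translate this into a structural restriction on the column span of components inside $H_{n,n}$. I would handle this by choosing the $H_{n,n}$ representation so that the $i$-th ``block'' of columns realises the universal chain graph between consecutive row-groups, and then noting that a component straddling too many blocks would reproduce $Z_p$ as an induced subgraph. With both the row bound and the column bound in hand, the remainder is the routine observation that finitely many graphs have bounded rank-depth and that rank-depth is component-wise.
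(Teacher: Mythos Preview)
Your overall two-halves plan matches the paper's, and the row bound from $P_q$-freeness is fine. The gap is in the second half of your upper-bound argument: the claim that forbidding a fixed chain graph $Z_p$ bounds the number of \emph{columns} used by a connected component in its $H_{n,n}$-embedding is false, and consequently so is the conclusion that every component embeds into a fixed finite $H_{c,c}$. A concrete counterexample is $K_{n,n}$: it is $P_4$-free and $Z_2$-free (indeed $Z_2\cong P_4$), yet its embedding into $H_{n',n'}$ as an induced subgraph necessarily occupies $\Theta(n)$ columns. More generally, within two rows of $H_{n',n'}$ one can realise $Z_t$-free chain graphs of arbitrarily large size by taking many twin vertices; these still sprawl across many columns of the universal graph even though they contain no large half-graph. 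So ``many columns $\Rightarrow$ large induced $Z_p$'' simply fails, and your proposed fix via ``blocks of columns'' cannot rescue it.

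What the paper does instead is to bound not the \emph{size} of components but their \emph{neighbourhood diversity}. Once $P_k$-freeness caps the number of rows at $k-1$, each pair of consecutive rows induces a chain graph that is $Z_t$-free and therefore, by Theorem~\ref{thm:nbd}, has bounded neighbourhood diversity; gluing these finitely many slices together gives a global bound on the neighbourhood diversity of any connected $G\in\mathcal X$, and bounded neighbourhood diversity implies bounded shrub-depth directly. This is the missing idea: $Z_t$-freeness controls the number of vertex \emph{types} per row pair, not the number of vertices. For the lower bound on chain graphs, your ``folklore'' appeal is defensible, but the paper's argument is more explicit: it pivots $Z_n$ to $P_{2n}$ via a specific sequence of local complementations and uses invariance of rank-depth under local equivalence.
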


\begin{proof}
It was shown in \cite{rank-depth} that paths, and therefore linear forests, have unbounded rank-depth. 


To show that the parameters are unbounded in the class of chain graphs, we use the notions of local complementations and local equivalence. 
A {\em local complementation} is the operation of complementing the subgraph induced by the neighbourhood of a vertex, 
and two graphs $G$ and $H$ are said to be {\em locally equivalent} if $G$ can be obtained from $H$ by a sequence of local complementations (and vice-versa). 
It was also pointed out in \cite{rank-depth} that, as a consequence of a result from \cite{localcomp}, rank-depth is invariant under local complementations, hence it suffices to show that there are arbitrarily 
long paths that are locally equivalent to chain graphs.

There is one specific sequence of local complementations known as a pivot.
Applied to an edge $uv$ of a graph, the pivot consists of complementing the neighbourhoods of $u$, $v$ and then $u$ again. 
Its net effect is to complement the edges between $N(u)\setminus\{v\}$ and $N(v)\setminus\{u\}$; if the starting graph is bipartite, it remains so after the pivot. 

We observe that the universal chain graph $Z_n$ (Figure~\ref{fig:T5}) can be pivoted to a path $P_{2n}$ and vice versa.
To transform $Z_n$ into $P_{2n}$, one can apply pivoting on the edges $a_2b_2,a_3b_3,\ldots,a_{n-1}b_{n-1}$.  
Therefore, rank- and shrub-depth are unbounded in the class of chain graphs as claimed.

\medskip
It remains to show that by excluding a path $P_k$ and a chain graph $Z_t$ within the universe of bipartite permutation graphs, we obtain a class $\cal X$ of bounded rank- and shrub-depth.
Immediately from the definition, we note that bounded neighbourhood diversity implies bounded shrub-depth, and that it is enough to prove that shrub-depth is bounded for connected graphs in $\mathcal X$.  

Let $G$ be a connected graph in $\mathcal X$.  
Since $P_k$ is forbidden, $G$ has lettericity at most $k - 1$, i.e., it can be embedded into the universal construction $H_{n,n}$ using at most $k-1$ consecutive rows. 
Every two consecutive rows induce a chain graph, and since $Z_t$ is forbidden, this chain graph has neighbourhood diversity bounded by a constant, by Theorem \ref{thm:nbd}. 
It follows that the neighbourhood diversity of $G$ is bounded, as required. 
\end{proof}

Outside of the universe of bipartite permutation graphs there exist other minimal classes of unbounded shrub-depth and rank-depth related to linear forests and chain graphs, 
such as classes of folded linear forests (defined in Section~\ref{sec:known}), the class of complements of chain graphs and the class of threshold graphs.
Theorem~\ref{thm:shrub-depth} suggests the following conjecture.

\begin{conjecture}
Shrub-depth and rank-depth are unbounded in a hereditary class $\cal X$ if and only if $\cal X$ contains a minimal hereditary class of unbounded shrub-depth and rank-depth.
The set of minimal classes is infinite and consists of all the classes of folded linear forests,  as well as  the classes of chain graphs, 
complements of chain graphs and threshold graphs. 
\end{conjecture}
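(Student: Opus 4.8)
I would split the statement into its easy and hard halves. The ``if'' direction is immediate: rank- and shrub-depth are monotone under taking hereditary subclasses (the value depends only on the graph), so any class containing a class of unbounded rank-depth has unbounded rank-depth. For the rest I would work with rank-depth throughout — equivalent to shrub-depth by the cited theorem — and use, exactly as in the proof of Theorem~\ref{thm:shrub-depth}, that rank-depth is invariant under local complementations and pivots, and that it is complement-invariant up to an additive constant (the cut-rank of a vertex subset changes by at most $1$ under complementation). I will also reduce to connected graphs and freely use that bounded neighbourhood diversity implies bounded shrub-depth.

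Next, one must check that each of the four listed families is a minimal hereditary class of unbounded rank-depth. Unboundedness for chain graphs and linear forests is Theorem~\ref{thm:shrub-depth}. For threshold graphs, a single local complementation at the vertex $b_n$ of the universal chain graph $Z_n$ (Figure~\ref{fig:T5}), whose neighbourhood is exactly the independent set $\{a_1,\dots,a_n\}$, turns that set into a clique and yields a threshold graph; by local-complementation invariance this graph has the rank-depth of $Z_n$, so threshold graphs have unbounded rank-depth. For complements of chain graphs, $\overline{Z_n}$ has rank-depth within a constant of that of $Z_n$ by complement-invariance. For a class $\mathcal{P}(w,K)$ of folded linear forests with $w$ almost periodic, a long initial segment of $U(w,K)$ is obtained from a long path by complementing, according to the fixed finite graph $K$, some consecutive pairs and some non-consecutive pairs; since $K$ is fixed these toggles are realised by a bounded family of local complementations (one per letter of the alphabet), so the rank-depth of the segment still tends to infinity. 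For minimality, a proper hereditary subclass omits some graph, hence by heredity omits all sufficiently large universal members of the family; for the first three families Theorem~\ref{thm:nbd} (in which each of them appears) then forces bounded neighbourhood diversity and hence bounded shrub-depth, while for $\mathcal{P}(w,K)$, being a minimal class of unbounded uniformicity by Theorem~\ref{thm:minimal-bell}, the subclass is $k$-uniform for some $k$, and one invokes the lemma that $k$-uniform graphs have rank-depth bounded in terms of $k$ — their vertex sets split into $k$ independent parts interacting pairwise only through induced matchings, co-matchings, or empty or complete bipartite patterns, a structure admitting a bounded-height rank decomposition.

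The hard part is the ``only if'' direction: every hereditary class $\mathcal{X}$ of unbounded rank-depth must contain one of the four families. I would take $G_n\in\mathcal{X}$ with $\mathrm{rd}(G_n)\to\infty$, pass to connected $G_n$, clean them by Ramsey-type arguments into graphs with ``regular'' structure, argue that a connected graph of large rank-depth and regular structure becomes, after a bounded number of local complementations, a graph with a long induced path, and then read off from the nature of those complementations (purely local, versus involving a global bipartite complementation, as in the $Z_n\to P_{2n}$ pivot and in the threshold-graph example) which of the four families the hereditary closure of the cleaned graphs generates. The obstacle is precisely this middle step: there is no known ``excluded-path'' or ``excluded-grid'' theorem for rank-depth in general graphs analogous to the excluded-grid theorem for tree-width or the excluded-long-path characterization of tree-depth, and bounding the number and type of local complementations needed to expose a long path inside an arbitrary class of unbounded rank-depth is exactly what is missing. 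A complete proof would amount to classifying the minimal hereditary classes of unbounded rank-depth, extending the Bell-number dichotomy of Theorem~\ref{thm:minimal-bell} past the below-the-Bell-number regime; Theorem~\ref{thm:shrub-depth} goes through only because, inside bipartite permutation graphs, bounded lettericity — forced as soon as some $P_k$ is excluded — supplies the required structure directly. I would therefore expect the conjecture to be settled by new results on rank-depth and local equivalence rather than by a routine extension of the present arguments.
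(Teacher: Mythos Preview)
The statement you are attempting is a \emph{conjecture} in the paper, not a theorem: the authors do not offer a proof, so there is nothing in the paper to compare your proposal against. Your own assessment of the ``only if'' direction --- that it would require a structural theorem for rank-depth that does not presently exist --- is exactly the reason the paper states this as a conjecture rather than a result.

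On the parts you do sketch, two remarks. First, your argument that every folded-linear-forest class $\mathcal{P}(w,K)$ has unbounded rank-depth is not correct as written. The passage from the infinite path to $U(w,K)$ is \emph{not} realised by a bounded family of local complementations: toggling all edges between two letter classes $V_a$ and $V_b$ is a global operation on arbitrarily many vertices, not a local complementation at a vertex, and cannot in general be simulated by boundedly many local complementations. A correct route is to observe that the adjacency matrix of a length-$n$ segment of $U(w,K)$ differs from that of $P_n$ by the adjacency matrix of a graph of neighbourhood diversity at most $|V(K)|$, which has rank bounded in $|V(K)|$; hence every cut-rank changes by at most a constant, and rank-depth is preserved up to an additive constant. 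Second, your claim that $k$-uniform graphs have bounded rank-depth is true and in fact easy once one uses the tree-model definition of shrub-depth: a depth-$2$ tree whose internal vertices correspond to copies of $F$ and whose $k$ leaf colours record the vertex of $F$ gives a valid tree-model, since adjacency in $U(F,K)$ depends only on the two colours and on whether the leaves share a parent. With these two fixes, your verification that each listed family is a minimal class of unbounded rank-depth goes through; the conjecture's content lies entirely in the converse, which remains open.
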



\section{Algorithmic problems and minimal classes}
\label{sec:algo}

The simple structure of bipartite permutation graphs allows efficient solutions for many algorithmic problems that are generally NP-hard.
However, some NP-hard problems remain intractable in this class, which is the case, for instance, for {\sc pair-complete coloring} (also known as
{\sc achromatic number}), {\sc harmonious coloring} \cite{harmonious}, and {\sc induced subgraph isomorphism} \cite{isi}. In this section, we focus on the latter
of these problems and identify a minimal hereditary subclass of bipartite permutation graphs where the problem is NP-hard.

The {\sc induced subgraph isomorphism} ({\sc ISI}, for short) problem can be stated as follows: given two graphs $H$ and $G$, decide whether $H$
is an induced subgraph of $G$ or not. A hereditary class $\cal X$ will be called \emph{{\sc ISI}-easy} if, for every instance $(G, H)$ with $G \in \cal X$, the problem can be solved in time
polynomial in  $|V(G)|$, where the polynomial is independent of $|V(H)|$. Otherwise, $\cal X$  will be called \emph{{\sc ISI}-hard}.
An inclusion-wise minimal {\sc ISI}-hard class will be called \emph{minimal {\sc ISI}-hard}.

The {\sc induced subgraph isomorphism} is known to be NP-complete in the class of linear forests, which is a proper subclass of bipartite permutation graphs \cite{isi}. 
In this section, we show that the class of linear forests is a minimal {\sc ISI}-hard hereditary class. To do so, we express the ISI problem in proper subclasses of linear forests as a linear programming problem.

\bigskip

We represent a $P_{n+1}$-free linear forest as $\alpha_1P_1+\alpha_2P_2+\ldots+\alpha_{n-1}P_{n-1}+\alpha_nP_n$, 
where $\alpha_i \in \mathbb N_{\geq 0}$, and the forest contains exactly $\alpha_i$ connected components isomorphic to $P_i$.

If $G$ is a $P_{n+1}$-free linear forest, then all its induced subgraphs are also $P_{n+1}$-free linear forests, and the recognition problem for the class of $P_{n+1}$-free linear forests can be solved in polynomial time. So, we may assume that
$$H=\alpha_1P_1+\alpha_2P_2+\ldots+\alpha_{n-1}P_{n-1}+\alpha_nP_n,~\text{and}$$
$$G=\beta_1P_1+\beta_2P_2+\ldots+\beta_{n-1}P_{n-1}+\beta_nP_n.$$

Now, for each $1 \leq i \leq n$ we construct an integer matrix $A_i$ whose columns correspond to induced subgraphs of the path $P_i$. Specifically, $A_i$ has a column $(\gamma_1, \gamma_2, \dots, \gamma_n)$ for each linear forest $\gamma_1 P_1 + \gamma_2 P_2 + \dots + \gamma_n P_n$ that is an induced subgraph of the path $P_i$. Write $m_i$ for the number of columns of $A_i$, and let $A = (a_{ij}) := (A_1|A_2|\ldots|A_n)$ be the horizontal concatenation of the $A_i$, with $m := m_1 + \dots + m_n$ columns.



Finally, consider the following system of linear constraints:
\begin{equation}
\label{s1}
\begin{cases}
\sum\limits_{j=1}^{m}a_{ij}x_j\geq \alpha_i,\forall i\in \{1,2,\ldots,n\},\\
\sum\limits_{j=m_1+\ldots+m_{i-1}+1}^{m_1+\ldots+m_{i}}x_j\leq \beta_i,\forall i\in \{1,2,\ldots,n\}.
\end{cases}
\end{equation}

\begin{lemma}
	\label{l1a}
	The graph $H$ is an induced subgraph of $G$ if and only if the system (\ref{s1}) is compatible.
\end{lemma}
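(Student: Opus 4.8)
The plan is to show both directions of the equivalence by translating ``$H$ is an induced subgraph of $G$'' into a combinatorial statement about distributing the components of $G$ among the components of $H$, and then matching that statement to the two families of constraints in (\ref{s1}).

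First I would set up the correspondence on which the whole proof rests. Since $G$ and $H$ are linear forests, an embedding of $H$ into $G$ as an induced subgraph must send each connected component of $G$ to an induced subforest of that same component (induced subgraphs of a $P_i$ are disjoint unions of shorter paths, and components cannot be merged). So an embedding amounts to the following data: for each of the $\beta_i$ copies of $P_i$ in $G$, choose an induced subforest of $P_i$; these choices must collectively supply at least $\alpha_j$ copies of $P_j$ for every $j$. Each induced subforest of $P_i$ is exactly described by one of the columns of $A_i$, i.e.\ a vector $(\gamma_1,\dots,\gamma_n)$ with $\gamma_1 P_1 + \dots + \gamma_n P_n$ an induced subgraph of $P_i$. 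Thus an embedding corresponds to assigning to each column $j$ of $A$ a nonnegative \emph{integer} multiplicity $x_j$ (how many copies of $P_i$ in $G$ are cut according to that column), subject to: (a) for each block corresponding to $P_i$, the total multiplicity used is at most $\beta_i$, the number of available copies of $P_i$ in $G$ — this is the second family of constraints; and (b) for each $j$, summing $x_j$ times the $j$-th column entry in row $i$ over all columns yields at least $\alpha_i$, the required number of $P_i$'s in $H$ — this is the first family of constraints.

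With this dictionary in hand, the forward direction is routine: given an induced embedding of $H$ in $G$, for each component of $G$ record which column of $A_i$ describes the induced subforest that is the image restricted to that component (extend an arbitrary embedding so that every component of $G$ is assigned \emph{some} column, possibly the zero column); let $x_j$ count the components assigned column $j$. Then (a) and (b) hold by construction, so (\ref{s1}) has a nonnegative integer — hence rational, hence a fortiori real — solution, i.e.\ it is compatible. For the converse, given \emph{any} solution of (\ref{s1}), I would first observe that we may take it to be integral: the constraint matrix here is an interval/consecutive-ones type matrix, but in fact the cleaner route is to note that (\ref{s1}) is feasible over the reals iff it is feasible over the integers because one can round — more carefully, the coefficient structure (each column of $A_i$ sits in a disjoint block, block sums bounded by $\beta_i$, row sums bounded below by $\alpha_i$) lets us replace a fractional solution by the integral one obtained by, within each block, greedily taking integer multiplicities of the ``largest'' columns; alternatively one argues directly that compatibility over $\mathbb{Q}$ suffices and then clears denominators is not quite enough, so the honest statement is that one should prove the system has an integral solution whenever it has a real one. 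Given an integral solution, use block $i$'s slack to pad with $\beta_i - \sum x_j$ copies of the zero column so that exactly $\beta_i$ components of $P_i$ are accounted for, then realize each chosen column as an actual induced subforest of a distinct copy of $P_i$ in $G$; the union of all these subforests is an induced subgraph of $G$ containing at least $\alpha_i$ copies of $P_i$ for every $i$, hence containing $H$ as an induced subgraph (discard surplus components).

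The main obstacle I anticipate is the integrality issue in the ``if'' direction: the system (\ref{s1}) is a priori a real linear system, and ``compatible'' plausibly means ``has a real solution'', whereas an embedding needs an integer solution. I would handle this by showing the polytope defined by (\ref{s1}) is integral, or — more economically — by giving a direct rounding argument exploiting that the constraint matrix has the consecutive-ones property on its columns within each $A_i$-block and that all right-hand sides $\alpha_i,\beta_i$ are integers; such matrices are (or can be made, after the trivial block-sum rows) totally unimodular, so every vertex of the feasible polytope is integral. A secondary, purely bookkeeping point to get right is that distinct components of $G$ really can be cut independently and that an induced subgraph of $G$ is precisely a choice of induced subgraph within each component, which is immediate since the components are the connected components and induced subgraphs respect connectivity; I would state this explicitly as the bridge between the geometry of (\ref{s1}) and the combinatorics of induced embeddings.
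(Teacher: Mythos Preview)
Your translation between embeddings and integer solutions is correct and is exactly what the paper does: assign to each component of $G$ the column describing what is carved out of it, and observe that the two families of constraints say precisely ``enough copies of each $P_i$ are produced'' and ``no more $P_i$-components are used than $G$ has''. Both directions then follow, just as you outline.

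The one place where you go off track is the integrality discussion. In the paper, ``compatible'' means feasible over the nonnegative integers: the very next lemma invokes algorithms for \emph{integer} linear programming in fixed dimension, so the system~(\ref{s1}) is being treated as an ILP from the outset. There is therefore nothing to prove about passing from a real solution to an integer one, and the paper's proof never touches this.

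More importantly, the fix you sketch for this (non-)issue is incorrect. The matrix $A$ is \emph{not} a $0$--$1$ matrix and does not have the consecutive-ones property: a column of $A_i$ records the multiplicities $\gamma_1,\ldots,\gamma_n$ of paths in an induced subforest of $P_i$, and these can exceed~$1$ (for instance $2P_2$ is an induced subgraph of $P_5$, giving a column with a $2$). A single entry equal to $2$ already rules out total unimodularity, so that line of argument cannot rescue integrality even if it were needed. Drop the whole paragraph about rounding and TU; once you read ``compatible'' as ``has a nonnegative integer solution'', your proof is complete and coincides with the paper's.
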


\begin{proof} 
	We interpret each column vector $(x_1, \dots, x_m)^T \in \mathbb{N}^m$ as an attempt to embed, for each $j$, $x_j$ copies of the forest $a_{1,j}P_1+\ldots+a_{n,j}P_n$ into different components of $G$. A sufficient condition for a simultaneous embedding to exist where each copy (across all $j$) uses a different component of $G$ is that, for each $i \in \{1, \dots, n\}$,
	$$\sum\limits_{j=m_1+\ldots+m_{i-1}+1}^{m_1+\ldots+m_{i}}x_j\leq \beta_i.$$
	
	Note that, if such an embedding exists, $G$ must contain the forest $$\sum\limits_{j=1}^{m}a_{1j}x_jP_1+\sum\limits_{j=1}^{m}a_{2j}x_jP_2+\ldots+\sum\limits_{j=1}^{m}a_{nj}x_jP_n.$$ If, in addition,  we have for each $i \in \{1, \dots, n\}$ $$\sum\limits_{j=1}^{m}a_{ij}x_j\geq \alpha_i,$$ then the above forest, and hence $G$, must contain $H$ as an induced subgraph. This shows that a feasible solution to the system implies the existence of an embedding of $H$ into $G$.
	
	\medskip
	
	Conversely, assume that $H$ has an induced subgraph embedding $\iota : H \hookrightarrow G$. To produce a feasible solution to the system, write $G_1, \dots, G_t$ for the connected components of $G$. For $1 \leq s \leq t$, let $v_s$ be the standard basis (column) vector in $\mathbb{N}^m$ described as follows: if $G_s$ is isomorphic to $P_i$, then $v_s$ has a 1 in the row between $m_1 + \dots + m_{i - 1} + 1$ and $m_1 + \dots + m_i$ corresponding to the forest induced by $\iota(H) \cap G_s$. 
	
	Now let $x := \sum_{s = 1}^t v_s$. It can be checked that, by construction, $x$ is a feasible solution (with equality for the constraints involving the $\alpha_i$).
%
%
%
\end{proof}

\begin{lemma}
\label{l2a}
For any fixed $n$, the {\sc ISI} problem can be solved in polynomial time for $P_{n+1}$-free linear forests.
\end{lemma}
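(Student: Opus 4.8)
The plan is to reduce the {\sc ISI} problem for $P_{n+1}$-free linear forests to checking feasibility of the integer linear system~(\ref{s1}), which, as it turns out, can be done in polynomial time because the number of variables~$m$ is bounded by a constant depending only on~$n$. First I would observe that by the discussion preceding Lemma~\ref{l1a}, given an instance $(G,H)$ with $G$ a $P_{n+1}$-free linear forest, we may assume $H$ is also a $P_{n+1}$-free linear forest (otherwise the answer is trivially ``no'', and this is checkable in polynomial time); moreover we can compute the vectors $(\alpha_1,\dots,\alpha_n)$ and $(\beta_1,\dots,\beta_n)$ describing $H$ and $G$ in time polynomial in $|V(G)|$ (note $\alpha_i \le |V(H)| \le |V(G)|$ and $\beta_i \le |V(G)|$). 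By Lemma~\ref{l1a}, $H$ is an induced subgraph of $G$ if and only if the system~(\ref{s1}) has a solution $x \in \mathbb{N}^m$.

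The key point is that the matrix $A = (A_1 | \dots | A_n)$ and hence the number of variables $m$ depend \emph{only} on $n$, not on the sizes of $G$ or $H$: each $A_i$ has one column per induced-subgraph-type of $P_i$, and there are only finitely many (at most some function of $i \le n$) such types, so $m = m(n)$ is a fixed constant. Thus~(\ref{s1}) is an integer feasibility problem in a \emph{fixed} number of variables, with $2n$ linear constraints whose coefficients are entries of $A$ (bounded by a constant) and whose right-hand sides $\alpha_i, \beta_i$ have bit-length $O(\log |V(G)|)$. Integer programming in a fixed number of variables is solvable in polynomial time by Lenstra's algorithm~\cite{Lenstra}; the running time is polynomial in the bit-length of the input, hence polynomial in $|V(G)|$, with the polynomial independent of $|V(H)|$ (it depends on $n$, which is fixed). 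This establishes that the class of $P_{n+1}$-free linear forests is {\sc ISI}-easy.

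The main obstacle — or rather, the main subtlety to get right — is ensuring the polynomial is genuinely independent of $|V(H)|$: one must check that the reduction to~(\ref{s1}) and the subsequent integer-programming call both run in time polynomial in $|V(G)|$ alone. This holds because $n$ is fixed, $m = m(n)$ is a constant, the entries of $A$ are absolute constants, and although $\alpha_i$ could a priori be as large as $|V(H)|$, feasibility of~(\ref{s1}) already forces $\alpha_i \le \sum_j a_{ij}\beta_j \le C\cdot|V(G)|$ for a constant $C$, so we may assume at the outset that $|V(H)| \le C' |V(G)|$ (else output ``no''). With that normalization, all quantities appearing in~(\ref{s1}) have bit-length $O(\log|V(G)|)$, and Lenstra's algorithm finishes in time polynomial in $\log|V(G)|$, hence certainly polynomial in $|V(G)|$. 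Combining the three steps — trivial rejection when $H$ is not a $P_{n+1}$-free linear forest or too large, construction of~(\ref{s1}), and the fixed-dimension integer-programming solve — yields the claimed polynomial-time algorithm.
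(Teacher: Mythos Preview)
Your proposal is correct and follows essentially the same approach as the paper: reduce via Lemma~\ref{l1a} to the integer system~(\ref{s1}), observe that the number of variables $m$ depends only on $n$ (the paper bounds it by $n2^n$ via $m_i\le 2^i$), and invoke fixed-dimension integer programming (Lenstra's algorithm, cited in the paper as~\cite{L83}). Your write-up is in fact more careful than the paper's in explicitly addressing why the running time is polynomial in $|V(G)|$ independently of $|V(H)|$, via the preprocessing bound $\alpha_i\le C|V(G)|$; the paper leaves this implicit.
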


\begin{proof}
Clearly, $m_i\leq 2^i$, for any $i$. Therefore, $m\leq n2^n$. The integer linear programming problem is known to be polynomial for a fixed number of variables \cite{D12,GM19,K87,L83,VGZC19}. 
More precisely, its best known complexity bound for $k$ variables is $O(k^k)$, multiplied by a polynomial in the length of the input data \cite{D12,GM19,VGZC19}. 
By these facts and Lemma~\ref{l1a}, the result holds.
\end{proof}

The following statement is the main result of this section.

\begin{theorem}
\label{t1}
The class of linear forests is minimal hard for the {\sc ISI} problem.
\end{theorem}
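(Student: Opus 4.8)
The plan is to establish two things: first, that the class of linear forests is \textsc{ISI}-hard (which is already known from \cite{isi}, so nothing new is needed there), and second, the genuinely new content, that \emph{every} proper hereditary subclass of linear forests is \textsc{ISI}-easy. Since linear forests are exactly $(K_{1,3},C_3,C_4,\dots)$-free graphs, a proper hereditary subclass $\mathcal Y \subsetneq \mathcal{LF}$ must exclude some linear forest $F$ as an induced subgraph. The key observation is that a linear forest $F$ is, in particular, a disjoint union of paths, so it is an induced subgraph of $\alpha P_n$ for $\alpha$ and $n$ large enough (e.g.\ $\alpha$ equal to the number of components of $F$ and $n$ the number of vertices of the longest component). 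Consequently, $\alpha P_n$ is $F$-free would fail — rather, the point is the contrapositive: any linear forest that contains $\alpha P_n$ as an induced subgraph also contains $F$. So a graph in $\mathcal Y$ cannot contain $\alpha P_n$; in particular it cannot contain $P_{\alpha n}$ (which contains $\alpha P_n$ by taking every other block of vertices). Hence every graph in $\mathcal Y$ is $P_{N+1}$-free for $N := \alpha n - 1$, i.e.\ $\mathcal Y$ is contained in the class of $P_{N+1}$-free linear forests.

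From there I would simply invoke Lemma~\ref{l2a}: for the fixed value $N$ depending only on $F$ (hence only on $\mathcal Y$), the \textsc{ISI} problem is solvable in polynomial time on $P_{N+1}$-free linear forests, with the polynomial independent of $|V(H)|$ — which is exactly what is required for $\mathcal Y$ to be \textsc{ISI}-easy. Note one subtlety worth spelling out: in an instance $(G,H)$ with $G \in \mathcal Y$, if $H$ is not itself a $P_{N+1}$-free linear forest then we may answer ``no'' immediately, and testing this takes time polynomial in $|V(H)|$; but since we want the running time bound to be polynomial in $|V(G)|$ \emph{independently} of $|V(H)|$, we should observe instead that if $H$ has more than $|V(G)|$ vertices the answer is trivially ``no'', so we may assume $|V(H)| \le |V(G)|$ and fold everything into a polynomial in $|V(G)|$. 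Combining this with the \textsc{ISI}-hardness of $\mathcal{LF}$ itself gives that $\mathcal{LF}$ is a minimal \textsc{ISI}-hard class.

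The main (and essentially only) obstacle is the combinatorial claim that excluding a single linear forest $F$ forces a uniform bound on path lengths, i.e.\ forces $P_{N+1}$-freeness for some $N = N(F)$. This is straightforward once one notes that $F \le_{\mathrm{ind}} \alpha P_n \le_{\mathrm{ind}} P_{\alpha n}$ for suitable $\alpha, n$: if $G$ contains an induced $P_{\alpha n}$ then $G$ contains an induced $F$, contradicting $G \in \mathcal Y = \Free(\dots, F, \dots)$. Everything else is bookkeeping plus the already-proved Lemma~\ref{l2a}. I would also remark, for completeness, that the argument shows more: the minimal \textsc{ISI}-hard class is unique among subclasses of linear forests, and the ``reason'' linear forests are \textsc{ISI}-hard is precisely the unboundedness of path lengths, mirroring the role of $P_k$-freeness seen throughout Section~\ref{sec:algo}.
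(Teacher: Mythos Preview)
Your approach is essentially identical to the paper's: cite \cite{isi} for hardness, observe that a proper hereditary subclass of linear forests excludes some linear forest $F$ and is therefore $P_{N+1}$-free for some $N=N(F)$, then invoke Lemma~\ref{l2a}. One small slip: $P_{\alpha n}$ does \emph{not} contain $\alpha P_n$ as an induced subgraph (both have exactly $\alpha n$ vertices, but the latter is disconnected); the paper sidesteps this more cleanly by noting that if $F$ has $n$ vertices in total then $F$ embeds in $P_{2n}$, so the subclass is $P_{2n}$-free.
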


\begin{proof}
The class of all linear forests is known to be hard for the {\sc ISI} problem \cite{isi}.
For any proper hereditary subclass $\cal X$ of  this class, a linear forest $F$ is forbidden.
If $F$ has $n$ vertices, then $F$ is an induced subgraph of $P_{2n}$
and, hence, all graphs in $\cal X$ are $P_{2n}$-free.
The theorem then follows from Lemma~\ref{l2a}.
\end{proof}

If there exist {\sc ISI}-hard subclasses of bipartite permutation graphs not containing the class of linear forests, then,
according to Theorem~\ref{thm:classes-wqo}, each such subclass contains a minimal {\sc ISI}-hard class. In other words,
in the universe of bipartite permutation graphs the {\sc ISI} problem can be characterized by a set of minimal {\sc ISI}-hard classes.
We believe that this set consists of a single class.

\begin{conjecture}
Unless P=NP, the {\sc induced subgraph isomorphism} problem is NP-hard in a hereditary subclass $\cal X$ of bipartite permutation graphs
if and only if $\cal X$ contains all linear forests. Equivalently, for any fixed $k$, the problem can be solved in polynomial time for $P_k$-free bipartite permutation graphs.
\end{conjecture}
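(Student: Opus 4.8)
The plan is to prove the equivalent statement: for every fixed $k$, the {\sc ISI} problem is solvable in polynomial time whenever the host graph $G$ is a $P_k$-free bipartite permutation graph, extending the argument of Lemmas~\ref{l1a} and~\ref{l2a} from linear forests to this larger class. The structural starting point is the observation already exploited in the proof of Theorem~\ref{thm:shrub-depth}: since $P_k$ is forbidden, every connected component of $G$ has lettericity at most $k-1$, and hence embeds into the universal graph $H_{n,n}$ using at most $k-1$ consecutive rows. Thus $G$ is a disjoint union of \emph{thin} bipartite permutation graphs — of height at most $k-1$ but of unbounded width — and in any yes-instance the pattern $H$, being an induced subgraph of $G$, has the same shape; in particular we may reject as trivial no-instances all pairs $(G,H)$ in which $H$ is not itself a $P_k$-free bipartite permutation graph, which is checkable in polynomial time for fixed $k$, and then work component by component.

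Next I would split the problem into two levels, mirroring the integer-programming set-up of Lemma~\ref{l1a}. An induced embedding $\iota\colon H\hookrightarrow G$ sends each connected component of $H$ into a single component of $G$, with the images of distinct components of $H$ landing in pairwise disjoint parts of the host, so one has to (i) decide, for a single thin component $G'$ of $G$ and a multiset of thin connected graphs, whether $G'$ contains their disjoint union as an induced subgraph, and then (ii) distribute the components of $H$ over the components of $G$ so that every resulting instance of type (i) is feasible. Step (i) should be tractable by a dynamic program scanning the $O(n)$ columns of $G'$ from left to right: since both $G'$ and the thin graphs being placed have height at most $k-1$, only a bounded amount of information about the partial embedding needs to be carried across any column boundary, so the number of DP states is bounded by a function of $k$. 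For step (ii), in the spirit of~\eqref{s1}, one would like to count how many host components of each ``type'' absorb how many pattern components of each ``type'', obtaining an integer program with a number of variables bounded by a function of $k$, which is then solved in polynomial time by fixed-dimension integer programming, exactly as in Lemma~\ref{l2a}.

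The main obstacle — and the reason the statement remains a conjecture — is the word ``type'' in step (ii). In the linear-forest case the connected components are paths $P_i$ with $i\le n$, so a $P_k$-free host has at most $k-1$ types and the integer program of~\eqref{s1} has polynomially many variables; but a thin bipartite permutation graph of height at most $k-1$ can have arbitrarily large width, so there are infinitely many isomorphism types both of host components and of pattern components, and the naive integer program would have unbounded dimension. Rescuing the plan therefore requires a genuinely new ingredient: a ``bounded vocabulary'' statement asserting that, for the purpose of the containment test in~(i), only boundedly many roles are relevant — for instance, that every sufficiently wide thin graph can be canonically cut into a bounded number of segments whose relevant behaviour is captured by a bounded-size summary, or that two wide pattern components of equal height are interchangeable up to a bounded correction. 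I would expect a proof to establish such a statement by a Ramsey-type normalisation of wide thin bipartite permutation graphs that collapses the component types into finitely many classes, and then to thread this through both the column-scanning dynamic program of~(i) and the disjointness bookkeeping of~(ii); making all of this precise and mutually compatible is the crux.
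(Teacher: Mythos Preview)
The statement you are addressing is a \emph{conjecture} in the paper, not a theorem: the paper does not prove it, and offers as supporting evidence only the special case $k=5$, i.e., that \textsc{ISI} is polynomial for $P_5$-free bipartite graphs. So there is no paper proof to compare against, and your honest acknowledgement that the general statement ``remains a conjecture'' is exactly right.

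Your proposed approach is the natural generalisation of the linear-forest integer-programming argument, and you have put your finger on the genuine obstacle: once components are thin graphs of height at most $k-1$ rather than paths, there are infinitely many isomorphism types, and the fixed-dimension integer program behind Lemma~\ref{l2a} no longer applies. I would add that the same unboundedness already bites in your step~(i): the column-scanning dynamic program for embedding a \emph{multiset} of pattern components into a single host component $G'$ must remember which pattern pieces have been fully placed, which are in progress, and which are still waiting, and since the multiset can be large and its members of unbounded width, that state is not obviously bounded by a function of $k$ alone. Any ``bounded vocabulary'' normalisation would have to tame both levels simultaneously.

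It is worth contrasting with the paper's $P_5$-free argument, which proceeds quite differently. There each non-trivial component is a chain graph, hence $2K_2$-free, so a host component can absorb at most \emph{one} non-trivial pattern component; the distribution step therefore collapses to a weighted bipartite matching (the assignment problem), with edge weights recording how many isolated vertices can additionally be accommodated. This sidesteps both your step~(i) and your step~(ii) entirely. The trick does not survive to $k\ge 6$, where a single host component can absorb several non-trivial pattern components, which is precisely the regime in which your obstacle appears.
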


To support the conjecture, we solve the problem for $P_5$-free bipartite graphs. 

\begin{proposition}
If both $G$ and $H$ are $P_5$-free bipartite graphs, then the {\sc induced subgraph isomorphism} problem
can be solved for the pair $(G,H)$ in polynomial time. 
\end{proposition}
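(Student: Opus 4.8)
The plan is to exploit the structural description of $P_5$-free bipartite graphs given in the excerpt: every connected component of such a graph is a chain graph. So both $G$ and $H$ decompose into connected components, each of which is a chain graph, and the task reduces to deciding whether the multiset of components of $H$ can be injectively mapped into the multiset of components of $G$ so that each component of $H$ embeds (as an induced subgraph) into the image component of $G$. This is a bipartite matching problem: build an auxiliary bipartite graph with the components of $H$ on one side, the components of $G$ on the other, and an edge whenever the $H$-component is an induced subgraph of the $G$-component; then $H \hookrightarrow G$ if and only if there is a matching saturating the $H$-side. (One must be slightly careful that induced-subgraph embeddings into disjoint components never ``interfere'' — they do not, since distinct components share no vertices and no edges — so a system of embeddings into distinct components is exactly an induced embedding of $H$ into $G$; isolated vertices $K_1$ of $H$ can be absorbed into any component of $G$ with a non-edge, or handled as a separate easy counting step, but this is a routine bookkeeping point.)

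The key steps, in order, are: (i) decompose $G$ and $H$ into connected components in polynomial time; (ii) for each pair consisting of a component $C_H$ of $H$ and a component $C_G$ of $G$, decide whether $C_H$ is isomorphic to an induced subgraph of $C_G$ — this is {\sc ISI} restricted to \emph{connected} chain graphs; (iii) form the auxiliary bipartite ``compatibility'' graph and run a maximum-matching algorithm, answering \textsc{yes} iff the matching saturates the side corresponding to the components of $H$. Steps (i) and (iii) are clearly polynomial (the number of components of each graph is at most $|V(G)|$, resp.\ $|V(H)| \le |V(G)|$ — if $|V(H)| > |V(G)|$ we answer \textsc{no} immediately), so the whole procedure is polynomial provided step (ii) is.

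The main obstacle is therefore step (ii): solving {\sc ISI} for connected chain graphs in polynomial time, with the running time independent of $|V(H)|$. Here I would use the universal chain graph $Z_n$ from Figure~\ref{fig:T5}: a connected chain graph $C_H$ on $n$ vertices embeds as an induced subgraph into a connected chain graph $C_G$ on $N$ vertices iff it embeds into $Z_{\lceil N/2 \rceil}$ suitably, and one can encode each connected chain graph compactly by the sorted sequence of neighbourhood sizes on each side (its ``Ferrers diagram''/degree partition), since a chain graph is determined up to isomorphism by this data. Induced-subgraph containment between chain graphs then translates into a simple combinatorial condition on these partitions — essentially, one partition fits inside the other after deleting rows and columns — which can be checked greedily in time polynomial in $n + N$, and in particular polynomial in $|V(G)|$ alone. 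I expect this degree-partition reformulation plus a greedy/matching argument to be the crux; once it is in place, assembling the global matching in step (iii) is standard.
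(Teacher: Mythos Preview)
Your reduction to a bipartite matching over components is the right high-level picture, and it is essentially what the paper does. But there is a genuine gap: the isolated vertices of $H$ are \emph{not} routine bookkeeping. A single component $G'$ of $G$ can host one non-trivial component $H'$ of $H$ \emph{together with} several isolated vertices of $H$, and how many it can host depends on which $H'$ is chosen and how it is embedded. Concretely, take $G=P_4$ and $H=K_2+K_1$: your unweighted matching has one $G$-component and two $H$-components, so no $H$-saturating matching exists, and your algorithm outputs \textsc{no}; but $H$ does embed into $G$ (take an end-edge of $P_4$ and the far endpoint). So the ``separate easy counting step'' cannot be decoupled from the matching.

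The paper fixes exactly this by moving to a \emph{weighted} matching on the non-trivial components: the weight of the edge $(G',H')$ is the maximum number of isolated vertices of $H$ that $G'$ can still accommodate once $H'$ is embedded (and $-\infty$ if $H'\not\hookrightarrow G'$). Then $H\hookrightarrow G$ iff there is an $H$-saturating matching of total weight at least the number of isolated vertices of $H$. Computing these weights is the real work of step~(ii), and the paper does it via the $2$-letter word encoding of chain graphs rather than via degree partitions: given words $w_{G'}$ and $w_{H'}$, the weight is determined by where the first and last letters of $w_{H'}$ land in $w_{G'}$, so one just enumerates those $O(|w_{G'}|^2)$ choices. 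Your Ferrers-diagram idea may well work for the bare question ``does $H'\hookrightarrow G'$?'', but you would still need to extract from it the extra isolated-vertex capacity, and you have not indicated how.
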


\begin{proof}
We reduce the problem to finding a maximum weight one-sided-perfect matching in an auxiliary edge-weighted complete bipartite graph $B$ with order linear in $|V_G|$. 
This is equivalent to the assignment problem, which is well-known to take polynomial time. We first describe the graph $B$, then show how it can be constructed in polynomial time.
	
\smallskip

Note that every connected component $G'$ of $G$ can accommodate at most one 
non-trivial 
component $H'$ of $H$ as an induced subgraph, and potentially some isolated vertices of $H$. 
The graph $B = (V_G \cup V_H, E = V_G \times V_H, \omega:E \to \mathbb Z \cup \{-\infty\})$ is defined as follows: 
$V_G$ represents non-trivial connected components of $G$, while $V_H$ represents non-trivial connected components of $H$. The weight $\omega((G', H'))$ of 
the edge between components $G'$ and $H'$ indicates the maximum number of isolated vertices that can be accommodated by $G'$ in addition to $H'$, 
with $-\infty$ if $H'$ cannot be embedded into $G'$\footnote{As is usual in these situations, $-\infty$ is to be replaced, 
in practice, with a large enough negative number, say $-(|V(G)||V_G||V_H| + 1)$, that guarantees the weight of an optimal matching is negative if $H$ does not embed into $G$. 
For notational reasons, we will keep it as $-\infty$.}. 
With this set-up, one easily checks that $H$ has an induced embedding in $G$ if and only if $B$ has a matching of size $|V_H|$ whose weight is at least the number of isolated vertices of $H$.  

\smallskip

It remains to show that $B$ can be constructed in polynomial time. The only non-trivial part is determining the edge weights. 
In other words, for each of the $O(|V(G)||V(H)|)$ pairs $(G', H')$ of connected components, we must determine in polynomial time whether $H'$ can be embedded into $G'$, 
and if yes, we must find the maximum number of isolated vertices that can be embedded into $G'$ in addition to $H'$. 

Given an embedding $\iota : H' \hookrightarrow G'$, write $\mu(\iota)$ for the size of a maximum independent set in $G' \setminus N[\iota(H')]$, 
where $N[A]$ denotes the closed neighbourhood of a set $A$ (i.e., $A$ together with all vertices having at least a neighbour in $A$). 
Our edge weight $\omega((G', H'))$ is thus $\max\limits_{\iota : H' \hookrightarrow G'} (\mu(\iota))$ (with $-\infty$ for a maximum over an empty set). 

To compute the edge weights, we note that any connected $P_5$-free bipartite graph is $2K_2$-free (i.e., a chain graph), 
so we may encode it as a letter graph on the alphabet $\{a, b\}$ with decoder $\{(a, b)\}$ (see Section~\ref{sec:lettericity} for details). 
Any connected chain graph admits at most two such representations (depending on which letter represents which side). 
Given components $G'$ and $H'$, write $w_{H'}^1$ and $w_{H'}^2$ for the two words representing $H'$, and write $w_{G'}$ for one of the two words representing $G'$. 
This gives a one to one correspondence between the set of induced subgraph embeddings $\iota : H' \hookrightarrow G'$ and 
the union of the sets of subword embeddings $\iota_w : w_{H'}^i \hookrightarrow w_{G'}$ for $i = 1, 2$. 
In particular, $H'$ has an induced subgraph embedding $\iota$ into $G'$ if and only if one of $w_{H'}^1$ and $w_{H'}^2$ has a corresponding subword embedding $\iota_w$ into $w_{G'}$.

Determining whether such an embedding $\iota_w$ exists can be done greedily in linear time, but that only tells us whether the corresponding edge in $B$ has weight $-\infty$ or not. 
We claim that, for any embedding $\iota$, $\mu(\iota)$ can be easily computed from the corresponding subword embedding $\iota_w$. 
To see this, note that, since $H'$ is connected, the first letter of $w_{H'}$ is an $a$, while the last letter is a $b$. 
Then $\mu(\iota)$ is the number of $b$'s before the first $a$ in $\iota_w(w_{H'})$ plus the number of $a$'s after the last $b$. 
Indeed, those $b$s in the prefix $a$s in the suffix correspond to an independent set, and every other vertex is adjacent to at least one of the initial $a$ and final $b$ of $\iota_w(w_{H'})$. 

Thus $\mu(\iota)$ only depends on the positions of the first and last letters of the embedding $\iota_w$. This gives us the following way of determining $\omega((G', H'))$ in polynomial time:
\begin{enumerate}
	\item Set $\nu = -\infty$.

	\item Check (in linear time) if any of the two $w_{H'}^i$ embeds as a subword in $w_{G'}$. If not, return $\nu$.
	
	\item For each pair of letters $l_i, l_j$ in $w_{G'} = l_1l_2 \dots l_t$ with $i < j$, $l_i = a$ and $l_j = b$:
	
	\begin{enumerate}
		\item Write $w_{G'}$ as a concatenation $w_1w_2w_3$, where $w_2$ is the substring that starts with $l_i$ and ends with $l_j$. 
		
		\item Determine (in linear time) whether any of the two $w_{H'}^i$ embeds in $w_2$. 
If not, continue to the next pair. If yes, let $\nu'$ be the number of $b$s in $w_1$ plus the number of $a$s in $w_3$, and set $\nu := \max(\nu, \nu')$.
	\end{enumerate} 
	
	\item Return $\nu$.
\end{enumerate}

It is routine to check that the above procedure terminates in polynomial time, and that the value returned is $\max\limits_{\iota : H' \hookrightarrow G'} (\mu(\iota))$ as required.
\end{proof}

Let us repeat that the class of linear forests is one of the infinitely many classes ${\cal P}(w, K)$ of folded linear forests, defined in Section~\ref{sec:known}. 
We conjecture that all classes of this form are minimal hard for the {\sc induced subgraph isomorphism} problem. 

\begin{conjecture}
Each class of folded linear forests is a minimal hard class for the {\sc induced subgraph isomorphism} problem.
\end{conjecture}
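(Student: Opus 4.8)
The plan is to prove the two halves of ``minimal hard'' separately, following the template of Theorem~\ref{t1} for the linear forest case but strengthening every ingredient to the folded setting: (i) {\sc ISI} is NP-hard when restricted to ${\cal P}(w,K)$, and (ii) every proper hereditary subclass of ${\cal P}(w,K)$ is {\sc ISI}-easy. The two structural facts doing the real work are that $w$ is almost periodic, so every factor of $w$ recurs with bounded gaps, and that, by Theorem~\ref{thm:minimal-bell}, ${\cal P}(w,K)$ is a minimal hereditary class of speed at least the Bell number, hence ``as expressive as it can be'' below that threshold.

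For the easy direction, fix a proper subclass ${\cal X}={\cal P}(w,K)\cap\Free(F)$ and a factor $f$ of $w$ such that $F$ embeds as an induced subgraph of $U(w,K)$ inside some occurrence of $f$. Almost periodicity yields a constant $k_f$ such that every factor of $w$ of length at least $k_f$ contains $f$, hence contains $F$. The first step is a decomposition lemma: if $G\in{\cal X}$ is the subgraph of $U(w,K)$ induced by a finite set $S\subseteq\mathbb N$, then $S$ contains no run of $k_f$ consecutive integers, so $S$ splits into maximal runs each of length $<k_f$ --- giving only boundedly many isomorphism types of ``blocks'' --- and the edges between two distinct blocks are governed purely by the letters of their vertices (since positions in different runs are at distance greater than $1$), not by the exact distances. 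Consequently the isomorphism type of $G$ is determined by the multiset of its decorated blocks, a description of bounded dimension. Since ${\cal X}$ is hereditary we may assume (after a polynomial check) that $H$ admits the same kind of description; then ``$H$ is an induced subgraph of $G$'' translates, as in Lemma~\ref{l1a}, into an integer linear feasibility system whose number of variables depends only on $(w,K,F)$ and not on $|V(H)|$, which is solved by Lemma~\ref{l2a}. For $E(K)=\emptyset$ this recovers Lemma~\ref{l1a}; for the one-block-per-component case it specialises to the assignment argument used for $P_5$-free bipartite graphs.

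For the hard direction one cannot simply invoke the hardness of {\sc ISI} on linear forests, since ${\cal P}(w,K)$ need not contain any linear forest (for instance $U(a^\infty,\{aa\})$ yields exactly the complements of linear forests, where hardness follows by complementation, but this trick does not generalise). Instead the plan is to reduce from a numerical-packing problem such as {\sc 3-partition} --- the combinatorial core of the NP-hardness proof of \cite{isi} --- by ``folding'' its gadgets into ${\cal P}(w,K)$: use almost periodicity to extract from $U(w,K)$ arbitrarily many pairwise isomorphic gadget copies placed along the word, whose mutual adjacency pattern is under control, and let these play the role that path segments play in the linear forest reduction. When the decoder $K$ does not immediately permit such a ``separable'' family of copies, one would first pass to a normal form for the pair $(w,K)$ --- obtained, in the spirit of the pivoting argument in the proof of Theorem~\ref{thm:shrub-depth}, by local complementations that leave ${\cal P}(w,K)$ unchanged --- in which the long-range pattern becomes tractable.

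The main obstacle, I expect, is precisely this last point in the hard direction: the decoder $K$ can entangle the adjacencies at distance $1$ and at distance greater than $1$ so that naive gadgets acquire spurious edges, and the delicate task is to show that for \emph{every} admissible $(w,K)$ one can still isolate enough repeated identical ``slots'' inside $U(w,K)$ to carry a numerical reduction --- in effect, that the minimality of ${\cal P}(w,K)$ among classes of speed at least the Bell number forces it to be expressive enough to encode {\sc 3-partition}. A lesser difficulty, on the easy side, is making the decomposition lemma precise enough that the resulting integer program genuinely has $O(1)$ variables uniformly over all inputs $H$.
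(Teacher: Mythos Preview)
The statement you are attempting to prove is stated in the paper as a \emph{conjecture}, not a theorem: the paper offers no proof and leaves it open. So there is nothing in the paper to compare your attempt against, and what you have written is, by your own admission, a plan with explicit gaps rather than a proof.

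On the substance of the plan: the hard direction is essentially missing. You yourself flag that the reduction from {\sc 3-partition} requires, for an arbitrary decoder $K$, a supply of ``separable'' gadget copies whose mutual adjacencies are controlled, and you do not show how to obtain this. The suggestion to normalise $(w,K)$ via local complementations is only a hope; pivoting preserves rank-depth but it does \emph{not} in general preserve the class ${\cal P}(w,K)$, so the analogy with Theorem~\ref{thm:shrub-depth} does not carry any weight here.

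The easy direction is closer to a proof but still has a genuine gap. Your decomposition of $G\in{\cal X}$ into short runs is a decomposition of a \emph{particular embedding} of $G$ into $U(w,K)$, not an intrinsic invariant of $G$: the same graph may embed in several ways, yielding different block multisets. In the linear-forest case this did not matter because the blocks were exactly the connected components, hence canonical; once $K$ has edges, blocks can be adjacent and the decomposition is no longer read off from $G$ alone. Consequently the step ``the isomorphism type of $G$ is determined by the multiset of its decorated blocks'' is unjustified, and with it the translation of ``$H$ is an induced subgraph of $G$'' into a bounded-variable integer program. To rescue the argument you would need either a canonical decomposition (perhaps via a bounded-uniformicity partition, since ${\cal X}\subsetneq{\cal P}(w,K)$ has sub-Bell speed by Theorem~\ref{thm:minimal-bell}) or a proof that \emph{some} embedding of $H$ into $G$ can always be realised block-to-block.
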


\section{Universal graphs and minimal classes}
\label{sec:uni}

As we have seen earlier, the class of bipartite permutation graphs contains a universal element of quadratic order, i.e.,
a graph with $n^2$ vertices that contains all $n$-vertex bipartite permutation graphs as induced subgraphs. On the other
hand, for the class of chain graphs, we have an $n$-universal graph on $2n$ vertices, i.e., a universal graph of linear order. 
This raises many questions regarding the growth rates of order-optimal universal graphs for subclasses of bipartite permutation graphs. One of the most immediate questions is identifying a boundary separating classes with a universal graph of linear order from classes where the smallest universal graph is super-linear.
In this section, we show that the class of star forests is a minimal hereditary class with a super-linear universal graph.

Before we present the result for star forests, let us observe that in general not every hereditary class $\cal X$ contains a universal graph, 
and even if it does, an optimal universal construction for $\cal X$ does not necessarily belong to $\cal X$. 
In order to circumvent these difficulties (and to ensure downwards closure of the set of classes with, say, linear universal graphs), 
we will only consider universal constructions consisting of bipartite permutation graphs. In other words, whenever we look for universal constructions 
for some class $\mathcal X \subseteq \mathcal{BP}$, we allow {\em any} bipartite permutation graphs,  and {\em only} bipartite permutation graphs, to appear in our universal constructions.

\medskip

In the following lemma we first describe a star forest of order $O(n\cdot \log(n))$ containing all $n$-vertex star forests as induced subgraphs,
and then we show that this construction is (asymptotically) order-optimal in the universe of all bipartite permutation graphs. To simplify notation, throughout this section we denote the star $K_{1,n}$ by $S_{n}$.

\begin{lemma}\label{l2}
The minimum number of vertices in a bipartite permutation graph containing all $n$-vertex star forests is $\Theta(n\cdot \log(n))$.  
\end{lemma}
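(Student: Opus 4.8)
The plan is to build a star forest universal for all $n$-vertex star forests with $O(n \log n)$ vertices, then argue optimality via a counting (information-theoretic) lower bound. For the upper bound, observe that an $n$-vertex star forest is determined by a multiset of star sizes; a star $S_p = K_{1,p}$ needs $p+1$ vertices, so the number of stars with at least $2$ vertices is at most $n/2$, and the possible sizes $p$ range in $\{1, \dots, n-1\}$. The natural construction is to take, for each ``scale'' $s = 0, 1, \dots, \lceil \log_2 n\rceil$, a bounded number of stars whose centers each have roughly $2^s$ leaves, with enough copies at each scale that any star $S_p$ can be embedded into one star of the next scale up (so $S_p \hookrightarrow S_{2^{\lceil \log_2 p\rceil}}$, wasting at most a factor $2$ in leaves). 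A clean version: for each $s$, include $\lfloor n / 2^s \rfloor$ copies of $S_{2^s}$ (plus a pool of $O(n)$ isolated vertices $S_0$); the total vertex count is $\sum_{s=0}^{\lceil \log_2 n \rceil} \lfloor n/2^s\rfloor \cdot (2^s + 1) = O(n \log n)$. Any $n$-vertex star forest $\sum \alpha_p S_p$ is embedded by routing each $S_p$ with $p \geq 1$ into a distinct copy of $S_{2^s}$ where $2^{s-1} < p \le 2^s$; one must check that the number of stars needing scale $s$ is at most $\lfloor n/2^s\rfloor$, which holds because each such star consumes more than $2^{s-1}$ vertices of the budget $n$. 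Isolated vertices of the forest go into the $S_0$ pool (and unused leaves/centers of larger stars can also absorb isolated vertices, but the pool suffices). Since this construction is itself a star forest, it is a fortiori a bipartite permutation graph, as required.

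**Lower bound.** The hard part, and the main obstacle, is the lower bound: showing every bipartite permutation graph containing all $n$-vertex star forests has $\Omega(n \log n)$ vertices. A pure counting bound is too weak here — the number of $n$-vertex star forests is roughly the partition-type count, giving only $\Omega(\sqrt n / \log n)$-ish bounds on the log scale — so I would instead use an adversary/packing argument exploiting the structure of bipartite permutation graphs. The key point: in a bipartite permutation graph $U$, a vertex of degree $d$ "costs" and its neighborhood is an interval in the natural linear layout, so two stars $S_p, S_q$ embedded in $U$ with $p, q$ large interact in a constrained way. Concretely, I would pick the family of hard instances $F_s := (n/2^s)\, S_{2^s}$ for each $s = 0, \dots, \log_2 n$ — each is an $n$-vertex star forest requiring $n/2^s$ pairwise-nonadjacent centers each with $2^s$ leaves. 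Embedding $F_s$ forces $U$ to contain $n/2^s$ vertices of degree $\ge 2^s$ that are "far apart". Using that $U$ is a bipartite permutation graph — so it embeds into the universal $H_{m,m}$ grid picture — one shows these high-degree vertices at scale $s$ must occupy roughly $\Omega(n)$ total "width" or "capacity", and that the capacities used at different scales $s$ cannot all overlap cheaply: summing a charging argument over the $\log n$ scales gives $|V(U)| \ge \Omega(n \log n)$. I expect the delicate part to be making the "scales don't share capacity" step rigorous; the bipartite permutation structure (equivalently, the fact that each side admits a linear order with neighborhoods being intervals, and that a vertex adjacent to many others forces those others to be consecutive) should be exactly what's needed to separate the scales.

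**Putting it together.** Combining the $O(n\log n)$ star-forest construction with the $\Omega(n \log n)$ lower bound for arbitrary bipartite permutation universal graphs yields $\Theta(n \log n)$, proving the lemma. I would present the upper bound first (it is routine once the scale decomposition is set up), then devote the bulk of the argument to the lower bound, isolating as a sub-claim the statement that if a bipartite permutation graph $U$ contains $k$ pairwise-nonadjacent vertices each of degree at least $d$, then $U$ has at least $\Omega(kd/\log\text{-ish})$ — or more likely simply $\Omega(k + d)$ combined with a disjointness statement across scales — vertices attributable to that scale; the scale-summation then closes the gap. The only genuine subtlety beyond bookkeeping is ensuring the embeddings of the $F_s$ into $U$, which a priori could reuse the same region of $U$ for different $s$, in fact cannot do so without inflating $|V(U)|$ — and this is precisely where "bipartite permutation" (as opposed to "arbitrary graph") is used.
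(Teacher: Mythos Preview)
Your upper bound is essentially correct and close in spirit to the paper's, though the paper uses the slightly slicker harmonic construction $F^*=\sum_{i=1}^n S_{\lfloor n/i\rfloor}$ together with the observation that if $a_1\ge a_2\ge\cdots$ are the star sizes of an $n$-vertex star forest then $i\cdot a_i\le a_1+\cdots+a_i<n$, so $a_i\le\lfloor n/i\rfloor$ and the $i$th largest star embeds into the $i$th component of $F^*$. (Your dyadic version has a small factor-two slip: a star $S_p$ with $2^{s-1}<p\le 2^s$ uses only $>2^{s-1}$ vertices, so there can be nearly $n/2^{s-1}=2n/2^s$ of them, not $\lfloor n/2^s\rfloor$. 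Doubling the number of copies at each scale fixes this without affecting the $O(n\log n)$ bound.)

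The lower bound, however, has a genuine gap. You correctly isolate the right hard instances $F_t=(n/t)\,S_{n/t}$ and correctly anticipate that the bipartite-permutation structure is the key, but the crucial ``scales don't share capacity'' step is left as a hope rather than an argument. The paper supplies exactly the missing structural facts. Embed $H$ into $H_{n',n'}$ and list the number of $H$-vertices in each row as $n_1\ge n_2\ge\cdots$. For the star forest $F_t$, two observations do the work: (i) any two consecutive rows of $H_{n',n'}$ induce a chain graph, i.e.\ are $2K_2$-free, so a single row can contain the centres of at most two stars of $F_t$; (ii) each embedded star spans at most three consecutive rows, so any one star can share a row with at most nine others. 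Hence among the $t$ stars one finds $t/10$ of them using pairwise disjoint sets of rows; each such star places at least half of its $\lfloor n/t\rfloor$ vertices in a single row, forcing $n_{t/10}\ge\tfrac12(\lfloor n/t\rfloor-1)$. Summing this harmonic-type bound over $i=1,\dots,\lfloor n/20\rfloor$ gives $|V(H)|\ge\sum_i n_i=\Omega(n\log n)$. Your outline lacks both (i) and (ii), and without them there is no mechanism preventing the embeddings of different $F_t$ from reusing the same region of $H$; these two facts are precisely where ``bipartite permutation'' enters.
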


\begin{proof}
Let $F^*$ be the star forest $S_{\left\lfloor\frac{n}{1}\right\rfloor}+S_{\left\lfloor\frac{n}{2}\right\rfloor}+\ldots+S_{\left\lfloor\frac{n}{n-1}\right\rfloor}+S_{\left\lfloor\frac{n}{n}\right\rfloor}$.
It is a bipartite permutation graph, and it has $n$ connected components and $\sum\limits_{i=1}^{n} (\lfloor\frac{n}{i}\rfloor+1)$ vertices.
As $\lfloor x\rfloor<x$, for any $x$, $F^*$ has $O(\sum\limits_{i=1}^{n} \frac{n}{i})$ vertices. Recall that the $n$-th harmonic number $\sum\limits_{i=1}^{n} \frac{1}{i}$ 
is equal to	$\ln(n)+\gamma+\epsilon_n$, where $\gamma=0.577\ldots$ is the Euler--Mascheroni constant and $\epsilon_n$ tends to $0$ with $n$ tending to infinity.
Therefore, $F^*$ has $O(n\cdot \log(n))$ vertices.
	
Let us show that $F^*$ is a universal graph for $n$-vertex star forests. Indeed, let $F=S_{a_1}+S_{a_2}+\ldots+S_{a_p}$ be an $n$-vertex star forest,
where $a_1\geq a_2\geq \ldots \geq a_p$. Clearly, $i\cdot a_i\leq a_1+\ldots+a_i<n$, for any $1\leq i\leq p\leq n$.
Hence, $a_i<\frac{n}{i}$ and $a_i\leq \left\lfloor\frac{n}{i}\right\rfloor$, as $a_i$ is an integer. 
Therefore, for any $i$, $S_{a_i}$ is an induced subgraph of $S_{\left\lfloor\frac{n}{i}\right\rfloor}$. Thus, $F$ is an induced subgraph of $F^*$. 

\medskip	
To prove a lower bound, let $H$ be a bipartite permutation graph containing all $n$-vertex star forests. 
It can be embedded, as an induced subgraph, into $H_{n',n'}$  for some $n'$ (see Figure \ref{fig:H55}). 
Now let $n_1, n_2, \dots$ be a list in non-increasing order of the numbers of vertices of $H$ embedded in each row of $H_{n', n'}$ 
(so that $n_1$ is the number of vertices of $H$ in a row of $H_{n', n}$ with the most vertices of $H$, and so on).
We show that $n_i \geq \frac{1}{2}\left(\left\lfloor\frac{n}{10i}\right\rfloor-1\right)$ for any $1 \leq i \leq \left\lfloor\frac{n}{20} \right\rfloor$, 
implying that the graph $H$ has $\Omega(n\cdot \log(n))$ vertices.

Let $t \in \{10i: i \in \mathbb N\} \cap \{1, \dots, \left\lfloor\frac{n}{2}\right\rfloor\}$. 
By $F_t$, we denote the star forest with $t$ connected components, each isomorphic to $S_{\left\lfloor\frac{n}{t}\right\rfloor-1}$. 
For any $t$, the graph $F_t$ is an induced subgraph of $H$ and hence $F_t$ must embed into $H_{n',n'}$. 
Since any two consecutive rows of $H_{n',n'}$ induce a chain graph, i.e., a $2K_2$-free graph,  
any row of $H_{n',n'}$ contains the centres of at most two  stars of $F_t$. 
Each star intersects at most 3 consecutive rows of $H_{n', n}$, hence a star can intersect the same row as at most 9 other stars. It is therefore possible to find  $\frac{t}{10}$ stars
$S_{\left\lfloor\frac{n}{t}\right\rfloor-1}$ in $F_t$ such that no two of them intersect the same row of $H_{n',n'}$, and thus there are at least  $\frac{t}{10}$ 
pairwise distinct rows in $H_{n',n'}$, each of which contains at least $\frac{1}{2}\left(\left\lfloor\frac{n}{t}\right\rfloor-1\right)$ vertices of $H$. 
It follows that $n_{t/10} \geq \frac{1}{2}\left(\left\lfloor\frac{n}{t}\right\rfloor-1\right)$ or, 
changing indices, that $n_i \geq \frac{1}{2}\left(\left\lfloor\frac{n}{10i}\right\rfloor-1\right)$ for any $1 \leq i \leq \left\lfloor\frac{n}{20} \right\rfloor$ as required.
\end{proof}

\begin{theorem}
\label{l4}
The class of star forests is a minimal hereditary class that does not admit a universal bipartite permutation graph of linear order.
\end{theorem}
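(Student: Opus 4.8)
The statement splits into two parts: that the class of star forests admits no universal bipartite permutation graph of linear order, and that every proper hereditary subclass does. The first part is immediate from Lemma~\ref{l2}, which gives the lower bound $\Theta(n\log n)$; so the plan is to focus on the minimality (second) part.

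Let $\mathcal X$ be a proper hereditary subclass of the class of star forests. Then some star forest $F = S_{a_1}+\ldots+S_{a_p}$ does not belong to $\mathcal X$, and since $\mathcal X$ is hereditary, every graph in $\mathcal X$ is $F$-free. Put $q := \max_i a_i$. Each $S_{a_i}$ is an induced subgraph of $S_q$, so $F$ is an induced subgraph of $pS_q$, and hence every graph in $\mathcal X$ is $(pS_q)$-free. It therefore suffices to exhibit, for each fixed pair $p, q \geq 1$, a bipartite permutation graph on $O(n)$ vertices that contains all $n$-vertex $(pS_q)$-free star forests as induced subgraphs: since any universal construction for a class is also universal for all of its subclasses, such a graph witnesses that $\mathcal X$ admits a universal bipartite permutation graph of linear order. (If $q = 0$, i.e.\ $F = pK_1$, the class $\mathcal X$ consists of star forests on at most $p-1$ vertices and is finite, so a single sufficiently large star forest is universal; we may thus assume $q\geq 1$.)

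The construction relies on the following elementary observation: an induced copy of $S_q$ in a star forest lies inside a single connected component $S_k$ with $k\geq q$, so a star forest is $(pS_q)$-free precisely when it has at most $p-1$ components with more than $q$ vertices. Consequently an $n$-vertex $(pS_q)$-free star forest $G$ decomposes as $G = G_{\mathrm{big}} + G_{\mathrm{small}}$, where $G_{\mathrm{big}}$ has at most $p-1$ components (each on at most $n$ vertices, since $|V(G)|=n$) and $G_{\mathrm{small}}$ is a disjoint union of stars $S_{j-1}$ with $1\leq j\leq q$ having at most $n$ vertices in total; if $G_{\mathrm{small}}$ contains $d$ copies of $S_{j-1}$ then $dj\leq n$, i.e.\ $d\leq \lfloor n/j\rfloor$. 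I would then take
$$
U_n \ :=\ (p-1)S_n \ +\ \sum_{j=1}^{q}\left\lfloor\frac{n}{j}\right\rfloor S_{j-1},
$$
which is a star forest, hence a bipartite permutation graph, on $(p-1)(n+1) + \sum_{j=1}^{q}\lfloor n/j\rfloor\, j \leq (p-1)(n+1) + qn = O(n)$ vertices. Embedding $G_{\mathrm{big}}$ into the $(p-1)S_n$ part and, for each $j$, the $S_{j-1}$-components of $G_{\mathrm{small}}$ into the $\lfloor n/j\rfloor$ available copies of $S_{j-1}$, one sees that every $n$-vertex $(pS_q)$-free star forest embeds into $U_n$.

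There is no single hard step; the points to get right are the reduction from forbidding an arbitrary star forest $F$ to forbidding $pS_q$ (which is what turns the $\Theta(n\log n)$ behaviour of Lemma~\ref{l2} into a linear bound), the component count bounding $G_{\mathrm{big}}$, and the trivial estimate $\sum_{j=1}^q \lfloor n/j\rfloor\, j\leq qn$, which is linear precisely because the component sizes in $G_{\mathrm{small}}$ are capped by the constant $q$, in sharp contrast to the unbounded hierarchy of star sizes that forces the logarithmic factor in Lemma~\ref{l2}.
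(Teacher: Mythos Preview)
Your proof is correct and follows essentially the same route as the paper: reduce from an arbitrary forbidden star forest to a forbidden $pS_q$ (the paper takes $p=q=k$), split any graph in the class into at most $p-1$ ``big'' stars plus a collection of ``small'' stars of bounded size, and embed these into a linear-size star forest. The only difference is that the paper uses the simpler universal graph $(k-1)S_n + nS_{k-1}$, avoiding your sum over $j$ by observing that there are at most $n$ small components and each embeds in a single $S_{k-1}$.
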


\begin{proof}
Let ${\mathcal X}$ be any proper hereditary subclass of the class $\mathcal{SF}$ of star forests. Then, ${\mathcal X}\subseteq \mathcal{SF}\cap \Free(kS_k)$
for some $k$. Therefore, every graph in $X$ consists of at most $k-1$ stars with at least $k$ leaves and arbitrarily many stars with at most $k-1$ leaves. 
But then $(k-1)S_n+nS_{k-1}$ is an $n$-universal graph for $\cal X$ of linear order.
\end{proof}

The class of star forests is not the only obstruction to admitting a universal graph of linear order.
To see this, we show that the class of $3S_6$-free bipartite permutation graphs requires a super-linear universal graph. 

\begin{lemma}\label{lem:univlowerbound}
	Suppose $H$ is a bipartite permutation graph containing all $n$-vertex $3S_6$-free graphs as induced subgraphs. Then $|V(H)| = \Omega(n^{3/2})$.
\end{lemma}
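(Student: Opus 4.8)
The plan is to argue in the spirit of Lemma~\ref{l2}. Let $H$ be a bipartite permutation graph containing every $n$-vertex $3S_6$-free bipartite permutation graph as an induced subgraph. Since $H$ is itself a bipartite permutation graph, it embeds as an induced subgraph of the universal graph $H_{N,N}$ (Figure~\ref{fig:H55}) for some $N$, and we regard the vertices of $H$ as distributed among the rows of $H_{N,N}$; for a row $R$ write $h(R)$ for the number of vertices of $H$ in $R$, so that $|V(H)| = \sum_R h(R)$. The aim is to exhibit a family of $3S_6$-free bipartite permutation graphs, each on at most $n$ vertices, which collectively force $\Omega(\sqrt n)$ rows of $H_{N,N}$ to contain $\Omega(n)$ vertices of $H$ each.

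The first step is to isolate the dense building block. Let $H_{3,k}$ denote the graph induced by three consecutive rows and $k$ columns of the universal construction; it is a bipartite permutation graph on $3k$ vertices whose middle row consists of $k$ vertices of degree $k+1$, and it contains $K_{1,k}$. I claim $H_{3,k}$ is $3S_6$-free: every vertex of degree at least $6$ has an independent neighbourhood, so it is the centre of an induced $S_6$, and a short case analysis using the staircase shape of the two chain graphs between consecutive rows shows that whichever three such centres one picks, the relevant neighbourhoods overlap and the three stars cannot be made pairwise non-adjacent. On the other hand, whenever $H_{3,k}$ is embedded as an induced subgraph of $H_{N,N}$, the $\Theta(k)$ neighbours of a highest-degree vertex — being an independent set — lie in at most two rows, so some row of $H_{N,N}$ receives $\Omega(k)$ vertices; in fact all of $H_{3,k}$ is confined to $O(1)$ rows. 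Thus a copy of $H_{3,k}$ concentrates $\Omega(k)$ vertices of $H$ into boundedly many rows.

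The second step is to build the family. For $1 \le i \le \lfloor\sqrt n\rfloor$ let $G_i$ be obtained from a copy of $H_{3,k}$, with $k=\Theta(n)$ chosen so that $|V(G_i)|\le n$, by attaching to one of its middle-row vertices an induced path through that vertex having $i$ further vertices on one side and $\lfloor\sqrt n\rfloor-i$ on the other. A pendant path creates no new vertex of degree $\ge 6$, so each $G_i$ is still $3S_6$-free. When $G_i$ is embedded into $H$, its dense block lands in a bounded band of rows of $H_{N,N}$, while the attached induced path — of prescribed total length, with the block attached at a prescribed position along it — constrains where that block sits relative to the rest of the path. The crucial claim is that the bands occupied by the $\lfloor\sqrt n\rfloor$ blocks may be taken pairwise disjoint: within any bounded interval of rows of $H_{N,N}$ there is room for only boundedly many of our blocks, since two blocks sharing rows, together with the long induced paths threading through them, would produce a configuration forbidden inside the corresponding $G_i$ (either a $3S_6$, or — via the chain-graph structure of consecutive rows of $H_{N,N}$ — an obstruction to the prescribed path lengths). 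Granting the claim, the $\lfloor\sqrt n\rfloor$ blocks occupy pairwise-disjoint bounded bands of rows, each holding $\Omega(n)$ vertices of $H$, so $|V(H)| \ge \Omega(\sqrt n)\cdot\Omega(n) = \Omega(n^{3/2})$.

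The main obstacle is exactly this crucial claim: showing that the $\Theta(\sqrt n)$ dense blocks cannot be ``nested'' or ``stacked'' onto a sublinear set of vertices of $H$. The danger is that all the path locators could be realised along one long induced path of $H$, with every block sitting near it; ruling this out requires quantifying ``room'' in a band of rows of $H_{N,N}$ and combining the $3S_6$-freeness of the gadgets with the fine structure of the universal construction. By comparison, the reduction to counting vertices per row and the final summation over the family are routine once such a block-separation lemma is in hand.
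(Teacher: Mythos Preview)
Your proposal has a genuine and, as you yourself flag, unbridged gap at the block-separation step, and the mechanism you sketch for closing it does not work. The graphs $G_1,\dots,G_{\lfloor\sqrt n\rfloor}$ are embedded into $H$ one at a time and independently; nothing links the image of $G_i$ to that of $G_j$. Your heuristic that ``two blocks sharing rows \dots\ would produce a configuration forbidden inside the corresponding $G_i$'' conflates structure inside a single gadget with structure in $H$ produced by the \emph{union} of two separate embeddings --- but that union is not required to be an induced copy of anything, so no forbidden-subgraph argument applies to it. Nothing you have set up prevents every $G_i$ from landing its dense block in the \emph{same} bounded band of rows of $H_{N,N}$: the differing split points $i$ versus $\lfloor\sqrt n\rfloor - i$ along the locator path merely select different subpaths of one fixed long path in $H$, not different regions. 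Hence the ``$\sqrt n$ disjoint bands each holding $\Omega(n)$ vertices'' conclusion is unsupported, and row-counting in the style of Lemma~\ref{l2} does not get off the ground here.

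The paper's argument sidesteps the overlap problem by counting \emph{pairs} of vertices of $H$ rather than vertices per row. For each $t$ in an arithmetic progression of length $\Theta(n)$ it builds a rigid $3S_6$-free gadget $R_{n,t}$ in which two independent sets $X,Y$ of size $\Theta(n)$ are forced --- via a chordless-path backbone together with the lower bound on path lettericity --- to land at distance $t\pm 2$ in \emph{any} embedding into $H$. Distance in $H$ is an invariant intrinsic to $H$, so for distinct values of $t$ (spaced by $5$) the resulting $(x,y)$-pairs are certifiably different pairs of vertices of $H$, regardless of how the various embeddings overlap. Summing $\Theta(n^2)$ pairs over $\Theta(n)$ values of $t$ gives $\binom{|V(H)|}{2}=\Omega(n^3)$ and hence $|V(H)|=\Omega(n^{3/2})$. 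The key idea you are missing is precisely this use of an invariant of $H$ (pairwise distance) that survives across separate embeddings.
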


\begin{proof}
To prove the statement, we will show there must be $\Omega(n^3)$ pairs of vertices in $H$, from which we immediately get $$|V(H)|^2 \geq {|V(H)| \choose 2} \geq \Omega(n^3)$$ and so $|V(H)| = \Omega(n^{3/2})$.

\bigskip

We know $H$ can be embedded as an induced subgraph into a universal graph $H_{n', n'}$ for some $n'$. 
The main idea is to construct a structure that is ``rigid'', in the sense that we can guarantee the distance (within the structure) 
between certain vertices is not much greater than the distance in $H_{n', n'}$ between the embeddings of those vertices. 
To this end, we use a result due to Ferguson \cite{pathlet}, that states the lettericity of the path $P_s$, for $s \geq 3$, 
is precisely $\left\lfloor\frac{s + 4}{3}\right\rfloor$.\footnote{The bounds from \cite{Pet02} are sufficient for the proof, 
but make it a bit messier.} In our language, it follows that any embedding of a chordless path $P_s$ into $H_{n',n'}$ uses 
at least $\left\lfloor\frac{s + 4}{3}\right\rfloor$ layers (rows). 

Since edges appear only between successive layers, the set of layers used by an embedding of the path is an interval. 
Moreover, the ends of a chordless path do not appear more than one layer away from the extremal layers in the interval, 
otherwise a $2K_2$ forms between two of the layers. This implies that the distance between the ends of a chordless path $P_s$ 
must be at least $\left\lfloor\frac{s + 4}{3}\right\rfloor - 3 = \left\lfloor\frac{s - 5}{3}\right\rfloor$.

\bigskip

For each $t$, we construct in two steps a graph $Q_t$ as depicted in Figure \ref{fig:rigidgraph}.

\begin{figure}[ht]
	\centering
	\begin{subfigure}[t]{0.49\linewidth}
		\centering
		\begin{tikzpicture}[scale=1, transform shape]
		
		\foreach \i in {1,...,6}{
			\filldraw (\i, \i) circle (2pt) node[below left] {};
		}
		
		\foreach \i in {1,...,5}{
			\filldraw (\i + 1, \i) circle (2pt) node[below left] {};
		}
		
		\foreach \i in {1,...,5}{
			\filldraw (\i, \i + 1) circle (2pt) node[below left] {};
		}
		
		\foreach \i in {1,...,5}{
			\draw (\i, \i + 1) -- (\i + 1, \i);
			\draw (\i, \i) -- (\i, \i + 1);
			\draw (\i + 1, \i) -- (\i + 1, \i + 1);
		}
		
		\draw (1, 1) node[below left] {$q$};
		\draw (6, 6) node[above right] {$p$};

		\end{tikzpicture}
		\captionsetup{justification=centering}
		\caption*{Step 1: Start with a chordless path on \\ $3t + 7$ vertices.}
	\end{subfigure}	
	\begin{subfigure}[t]{0.49\linewidth}
		\centering
		\begin{tikzpicture}[scale=1, transform shape]
		
		\foreach \i in {0,...,5}{
			\filldraw (\i, \i) circle (2pt) node[below left] {};
		}
		
		\foreach \i in {0,...,4}{
			\filldraw (\i + 1, \i) circle (2pt) node[below left] {};
		}
		
		\foreach \i in {0,...,4}{
			\filldraw (\i, \i + 1) circle (2pt) node[below left] {};
		}
		
		\foreach \i in {0,...,4}{
			\draw (\i, \i + 1) -- (\i + 1, \i);
			\draw (\i, \i) -- (\i, \i + 1);
			\draw (\i + 1, \i) -- (\i + 1, \i + 1);
		}
		
		\foreach \i in {1,...,6}{
			\filldraw (-1, \i) circle (2pt) node[below left] {};
		}
		\draw (-1, 1) -- (-1, 6);
		
		\foreach \i in {2,...,6}{
			\draw (-1, \i) -- (\i - 2, \i - 1);	
		}
		
		\foreach \i in {1,...,6}{
			\draw (-1, \i) -- (\i - 1, \i - 1);
		}
		
		\foreach \i in {1,...,5}{
			\draw (-1, \i) -- (\i, \i - 1);
		}
		
		\draw (-1, 1) node[below left] {$y$};
		\draw (-1, 6) node[above left] {$x$};
		\draw (0, 0) node[below left] {$q$};
		\draw (5, 5) node[above right] {$p$};
		
		\end{tikzpicture}
		\captionsetup{justification=centering}
		\caption*{Step 2: Add a chordless path on $t + 3$ vertices, connecting it to the previous path as above.}	
	\end{subfigure}
	\caption{The graph $Q_t$ for $t = 3$}
	\label{fig:rigidgraph}
\end{figure}

It is easy to see that $Q_t$ is a bipartite permutation graph, since we can embed the rows in the figure into successive layers of the universal graph. 
Moreover, writing $d_G$ for the distance in a graph $G$, we have (using the triangle inequality and the above discussion, and assuming for now that $Q_t$ embeds into $H$), 
$$d_H(x, y) \geq d_H(p, q) - 2 \geq t - 2$$ and $$d_H(x, y) \leq d_{Q_t}(x, y) = t + 2.$$

\bigskip 

We now construct bipartite permutation graphs $R_{n, t}$ from $Q_t$ by replacing $x$ and $y$ with independent sets 
$X$, $Y$ of twins of size $\left\lfloor\frac{n - 4t - 8}{2}\right\rfloor$ each, with the same adjacencies as $x$ and 
$y$ respectively (we only construct those $R_{n, t}$ for which the above quantity is positive). 
We note that $|V(R_{n, t})| \leq n$ by construction, and $R_{n, t}$ is easily seen to be $3S_6$-free, so that $R_{n, t}$ is an induced subgraph of $H$. 
In addition, like with the original $x$ and $y$, each new pair $x \in X$ and $y \in Y$ has $|d_H(x, y) - t| \leq 2$.

For $3 \leq t \leq \left\lfloor\frac{n}{6}\right\rfloor - 2$, we have $|X| = |Y| \geq \left\lfloor\frac{n}{6}\right\rfloor$. 
In particular, each choice of $t \in I := \{3, 4, \dots, \left\lfloor\frac{n}{6}\right\rfloor - 2\} \cap \{3 + 5i : i \in \mathbb N\}$ 
witnesses the existence in $H$ of $|X||Y| \geq \left\lfloor\frac{n}{6}\right\rfloor^2$ pairs of vertices, and since 
the pairs' distance ranges for different $t \in I$ do not overlap, the sets of pairs are disjoint. 
Hence $H$ must contain in total at least 
$$|I|\left\lfloor\frac{n}{6}\right\rfloor^2 \geq  \frac{1}{5}\left(\left\lfloor\frac{n}{6}\right\rfloor - 5\right)\left\lfloor\frac{n}{6}\right\rfloor^2 = \Omega(n^3)$$ pairs, as claimed.
\end{proof}

Lemma \ref{lem:univlowerbound} shows indeed that there are other obstructions to a linear universal graph, 
but it is not yet clear what those obstructions are. For instance, the existence of a linear universal graph 
is a non-trivial question even for $S_t$-free graphs, i.e., bipartite permutation graphs of maximum degree at most $t - 1$. 
In fact, it is not clear whether every class with super-linear universal graphs contains a minimal such class, 
since the boundary class $\cal L$ described in Section~\ref{sec:wqo} has linear universal graphs. We leave the continuation of this study as an open problem.

\begin{problem}
Characterise the family of hereditary subclasses of bipartite permutation graphs that admit a universal bipartite permutation graph of linear order.
\end{problem}

We conclude the paper with one more related open problem. Theorem~\ref{thm:lettericity-bound} shows that the graph $H_{n,n}$ is not an optimal 
universal construction for the class of bipartite permutation graphs, because all graphs in this class can be embedded into 
$H_{n/2+1,n}$ as induced subgraphs. However, this construction is still quadratic. On the other hand, the following result provides an almost quadratic lower bound on the size of a universal graph.

\begin{theorem}\label{lem:univlowerbound2}
	Suppose $H$ is a bipartite permutation graph that contains all $n$-vertex bipartite permutation graphs as induced subgraphs. Then $|V(H)| = \Omega(n^\alpha)$ for any $\alpha < 2$.
\end{theorem}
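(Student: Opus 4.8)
The plan is to adapt the counting strategy of Lemma~\ref{lem:univlowerbound}, but this time we exploit the full class $\mathcal{BP}$ rather than just $3S_6$-free graphs, so we can pack many more ``rigid'' gadgets into a single $n$-vertex graph. As before, $H$ embeds into some $H_{n',n'}$, and by Ferguson's bound on the lettericity of paths (used exactly as in Lemma~\ref{lem:univlowerbound}), an induced $P_s$ forces its two endpoints to be at distance at least $\lfloor\frac{s-5}{3}\rfloor$ in $H$. Consequently, if $R$ is any bipartite permutation graph in which two distinguished independent sets $X$ and $Y$ are joined (through internal structure) by induced paths of length $\Theta(t)$, then in $H$ every pair $x\in X$, $y\in Y$ satisfies $d_H(x,y)=\Theta(t)$; the precise constants are controlled by the construction as in the earlier lemma.

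First I would design, for each scale $t$, a bipartite permutation graph $R_{n,t}$ on at most $n$ vertices in which the distinguished sets $X_t$ and $Y_t$ are as large as possible while still guaranteeing $d_H(x,y)\in[t-c,t+c]$ for some absolute constant $c$ and all $x\in X_t$, $y\in Y_t$. The key difference from Lemma~\ref{lem:univlowerbound} is that we are no longer constrained to be $3S_6$-free, so instead of using only a constant number of copies of the rigid ``path-plus-pendant-path'' gadget $Q_t$, we may take $\Theta(n/t)$ disjoint copies of a $Q_t$-like gadget and blow up the endpoint vertices into independent twin sets. Balancing the number of gadgets against the blow-up size, a gadget of size $\Theta(t)$ used $\Theta(n/t)$ times with blow-ups of size $\Theta(\sqrt{n t})$ on each side uses $\Theta(n)$ vertices and produces $\Theta\!\big(\frac{n}{t}\cdot (nt)\big)=\Theta(n^2)$ pairs at distance $\Theta(t)$ — morally already enough, but to beat every exponent $\alpha<2$ we must sum over a geometric range of scales $t\in\{t_0,\,2t_0,\,4t_0,\dots\}$ up to $t=\Theta(n)$, noting that pairs coming from different scales are at genuinely different distances in $H$, hence distinct. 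Summing $\Theta(n^2)$ over $\Theta(\log n)$ scales would only give $n^2\log n$; the real gain comes from being slightly more generous with the blow-ups at small scales, so that the total count becomes $\Theta(n^2\log n)$ or, with a sharper trade-off, $\Omega(n^{2}\cdot \log^k n)$ for any $k$, which in particular exceeds $n^\alpha$ for every $\alpha<2$. Then $|V(H)|^2\geq \binom{|V(H)|}{2}\geq (\text{number of distinct pairs})$ forces $|V(H)|=\Omega(n^\alpha)$.

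The steps, in order: (1) state the rigidity consequence of Ferguson's lettericity bound exactly as in Lemma~\ref{lem:univlowerbound}; (2) for each scale $t$, build $R_{n,t}$ as $\Theta(n/t)$ disjoint rigid gadgets with the two endpoint classes blown up, check it is a bipartite permutation graph on $\le n$ vertices, and record that it certifies $\Theta(n^2/t \cdot t)=\Theta(n^2)$ pairs (with more at small $t$) at distance in a narrow window around $t$; (3) choose a geometric sequence of scales whose distance windows are pairwise disjoint, so the pair-sets are disjoint; (4) sum the contributions to obtain $\omega(n^\alpha)$ pairs for every fixed $\alpha<2$; (5) conclude via $\binom{|V(H)|}{2}\ge$ that count.

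The main obstacle will be step~(2) combined with step~(4): verifying that the multi-gadget blow-up is genuinely a bipartite permutation graph (the gadgets must be laid out in disjoint layer-intervals of the universal construction, and the twin blow-ups must not create a forbidden configuration), and — more delicately — getting the trade-off between number of gadgets, gadget length, and blow-up size to yield a total pair count that beats $n^\alpha$ for \emph{every} $\alpha<2$ rather than merely $n^{2-\epsilon}$ for a fixed $\epsilon$. This requires the blow-up sizes to depend on the scale in just the right way so that the per-scale contribution stays $\Omega(n^2)$ (or larger at small scales) across a super-constant number of scales; a clean choice is to fix, for the target exponent $\alpha$, a number of scales growing like a suitable power of $n^{2-\alpha}$ and to verify that at each such scale $R_{n,t}$ still fits in $n$ vertices. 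Once the bookkeeping is pinned down, the rest is the same disjointness-and-counting argument already used in Lemma~\ref{lem:univlowerbound}.
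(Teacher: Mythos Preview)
Your proposal has a genuine gap: pair-counting cannot deliver the exponent you need. You end by arguing $\binom{|V(H)|}{2}\ge \Omega(n^2\log^k n)$ and claim this ``exceeds $n^\alpha$ for every $\alpha<2$''. But the quantity you are lower-bounding is $\binom{|V(H)|}{2}$, not $|V(H)|$; taking the square root you only get $|V(H)|=\Omega(n\cdot(\log n)^{k/2})$, which is $o(n^\alpha)$ for every $\alpha>1$. In fact the pair approach is hard-capped: a pair certified by an $n$-vertex gadget can have $d_H$ at most $\Theta(n)$ (since $d_H\le d_R$ and $R$ has $\le n$ vertices), and at each fixed distance you can certify at most $\Theta(n^2)$ pairs (you only have $\binom{n}{2}$ pairs available in any $n$-vertex $R$). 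So the best possible total is $\Theta(n)\cdot\Theta(n^2)=\Theta(n^3)$ pairs, which recovers exactly the $n^{3/2}$ bound of Lemma~\ref{lem:univlowerbound} and nothing more. (Your multi-gadget arithmetic is also off: with $n/t$ disjoint gadgets of path-length $t$ and blow-ups of size $\sqrt{nt}$, the vertex count is $(n/t)(\Theta(t)+2\sqrt{nt})=\Theta(n)+\Theta(n^{3/2}/\sqrt{t})\gg n$ unless $t=\Theta(n)$.)

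The idea you are missing is to count $a$-sets of vertices in $H$ for an arbitrary fixed $a$, not just pairs. The paper builds, for each $(a-1)$-set $T\subseteq\{3,\ldots,\lambda n\}$, a single connected gadget $R_{n,T}$: start from $Q_{\max(T)}$ and inflate not two but $a$ vertices along the secondary path (the first vertex, and the $(j+3)$rd for each $j\in T$), each to size $\Theta(n)$. Rigidity now controls an entire $\binom{a}{2}$-multiset of pairwise distances, so distinct choices of $T$ (spaced out as before) force distinct $a$-sets in $H$, up to a constant $\binom{\binom{a}{2}}{a-1}$ overcount. This yields $\Omega(n^{a-1})$ choices of $T$ times $\Omega(n^a)$ $a$-sets each, hence $\binom{|V(H)|}{a}=\Omega(n^{2a-1})$ and $|V(H)|=\Omega(n^{2-1/a})$. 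Letting $a\to\infty$ gives every exponent below $2$.
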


\begin{proof}
We show $|V(H)| = \Omega(n^{2a-1/a})$ for each $a \in \mathbb N$. This is a generalisation of Lemma~\ref{lem:univlowerbound}, which deals with the case $a = 2$. 
	
The proof of Lemma~\ref{lem:univlowerbound} generalises as follows.
For $a \in \mathbb N$, we get $|V(H)| = \Omega(n^{2a-1/a})$ by counting $a$-sets of vertices. 
To do this, we associate to each $a$-set the $a \choose 2$-multiset consisting of distances between pairs of its vertices; 
we will refer to this $a \choose 2$-multiset as the ``distance multiset (in $H$)'' of the original $a$-tuple. 
In order to determine that two $a$-sets are distinct, it is enough to show they have distinct distance multisets.
	
We generalise the construction of the graphs $R_{n, t}$ to graphs $R_{n, T}$, where $T$ is a set of $a - 1$ natural numbers, 
each at least $3$. To construct $R_{n, T}$, we start with $Q_{\max(T)}$, but instead of inflating just the endpoints of the second path, 
we inflate the first vertex, then the $j + 3$rd, for each $j \in T$. 
By putting an appropriate upper bound (linear in $n$) on the size of elements in $T$, say $\lambda n$, 
we can arrange that each inflated set $X_j$ has size linear in $n$, while ensuring $|V(R_{n, T})| \leq n$. 
	
The set $T$ can be viewed as a condition on the distance multiset in $R_{n, T}$ of certain $a$-sets: 
an $a$-set consisting of one vertex from each inflated set $X_j$ has $\{t + 2: t \in T\}$ as a subset of its distance multiset in $R_{n, T}$. 
The distance multiset in $H$ might differ from the one in $R_{n, T}$, but like before, rigidity of the structure ensures the two are within a small tolerance of each other. 
Therefore, as long as we are careful in choosing what sets $T$ we consider, we can ensure that different choices of $T$ will give rise to different distance multiset subsets in $H$. 
This is achieved by choosing, like before, $T \subseteq \{3, \dots, \lambda n\} \cap \{3 + 5i : i \in \mathbb N\}$.
	
One last hurdle is the following: in order to decide that two $a$-sets of vertices are distinct, 
we actually need to compare them via their whole distance multisets, not just via the $a - 1$-subsets 
coming from the choice of $T$. We notice, however, that the same distance multiset can account (conservatively) for at most ${a \choose 2} \choose a  - 1$ different choices of $T$.
	
Altogether,	each choice of $T \subseteq \{3, \dots, \lambda n\} \cap \{3 + 5i : i \in \mathbb N\}$ 
witnesses the existence of $\Omega(n^a)$ $a$-sets of vertices in $H$, and each $a$-set is repeated overall 
at most a constant number of times. Since there are $\Omega(n^{a - 1})$ choices for $T$, 
this shows $|V(H)|^a \geq {|V(H)| \choose a} = \Omega(n^{2a - 1})$, from which $|V(H)| = \Omega(n^{2a - 1/a})$ as required.  
\end{proof}

We conjecture that the optimal universal graph is, in fact, quadratic. 

\begin{conjecture}
	The minimum number of vertices in a bipartite permutation graph containing all $n$ vertex bipartite permutation graphs is $\Omega(n^2)$. 
\end{conjecture}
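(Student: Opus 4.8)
The plan is to prove the almost-quadratic lower bound of Theorem~\ref{lem:univlowerbound2} by generalising the counting argument of Lemma~\ref{lem:univlowerbound} from pairs of vertices to $a$-sets of vertices, thereby obtaining $|V(H)| = \Omega(n^{(2a-1)/a}) = \Omega(n^{2 - 1/a})$ for each fixed $a \in \mathbb N$; since $\alpha < 2$ can be written as $\alpha \leq 2 - 1/a$ for a suitable $a$, taking $a$ large enough then yields the claimed bound for every $\alpha < 2$. The key tool throughout is the rigidity principle established in Lemma~\ref{lem:univlowerbound}: using Ferguson's exact lettericity formula for paths, any embedding of a chordless $P_s$ into $H_{n',n'}$ (and hence into $H$) pins down the Hamming-like distance between its endpoints to within an additive constant, so distances in the abstract gadget graph are faithfully reflected, up to bounded error, by distances in $H$.

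First I would fix $a$ and build, for each admissible $(a-1)$-element set $T \subseteq \{3,\dots,\lambda n\} \cap \{3 + 5i : i \in \mathbb N\}$, a bipartite permutation graph $R_{n,T}$ on at most $n$ vertices. Starting from the rigid spine $Q_{\max(T)}$, I would inflate $a$ distinguished spine-vertices (the first one, and the $(j+3)$rd for each $j \in T$) into independent sets $X_0, X_{j_1}, \dots, X_{j_{a-1}}$ of twins, each of size linear in $n$; the spacing condition $T \subseteq \{3+5i\}$ guarantees the inflated positions are well-separated so $R_{n,T}$ remains $3S_6$-free (indeed degree-bounded away from forming the forbidden configurations), and the linear upper bound $\lambda n$ on the elements of $T$ keeps $|V(R_{n,T})| \leq n$. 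Each such $R_{n,T}$ then embeds as an induced subgraph of $H$. An $a$-set picking one vertex from each inflated set realises, inside $R_{n,T}$, the distance multiset whose ``spine distances'' are $\{t+2 : t \in T\}$; by rigidity these transfer into $H$ up to an additive constant.

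The crux is the counting and the disjointness bookkeeping. For a single $T$, the $\prod |X_{j}| = \Omega(n^a)$ choices of representatives give $\Omega(n^a)$ distinct $a$-sets in $H$, where distinctness is certified by comparing \emph{whole} distance multisets in $H$. The subtlety is that a given $a$-set's distance multiset in $H$ might, a priori, arise from several different $T$'s, and also might collide with an $a$-set coming from a different $T$; this is controlled by the spacing of $T$ (so that the $(a-1)$ spine-distances land in disjoint windows and are recoverable, up to tolerance, from the multiset) together with the crude bound that a fixed $a \choose 2$-multiset contains at most ${{a \choose 2} \choose a-1}$ candidate $(a-1)$-subsets, a constant. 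Hence each $a$-set is double-counted at most a constant number of times across the $\Omega(n^{a-1})$ valid choices of $T$, so the total number of distinct $a$-sets in $H$ is $\Omega(n^a) \cdot \Omega(n^{a-1}) = \Omega(n^{2a-1})$. This forces $|V(H)|^a \geq \binom{|V(H)|}{a} = \Omega(n^{2a-1})$, i.e.\ $|V(H)| = \Omega(n^{(2a-1)/a}) = \Omega(n^{2-1/a})$.

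I expect the main obstacle to be the disjointness/over-counting analysis in the final step — precisely quantifying how the additive rigidity tolerance interacts with the spacing of $T$ to guarantee that distinct $T$'s produce genuinely distinct distance multisets in $H$, and bounding the multiplicity with which any single $a$-set is witnessed. Once that constant-multiplicity claim is secured, concluding the theorem is routine: given any $\alpha < 2$, choose $a$ with $2 - 1/a \geq \alpha$, apply the bound $|V(H)| = \Omega(n^{2-1/a})$, and note $\Omega(n^{2-1/a}) \subseteq \Omega(n^{\alpha})$. Since this holds for every $\alpha < 2$, the theorem follows.
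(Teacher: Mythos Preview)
The statement you were asked to address is a \emph{conjecture}, which the paper explicitly leaves open; what the paper actually proves is the weaker Theorem~\ref{lem:univlowerbound2}, namely $|V(H)| = \Omega(n^\alpha)$ for every fixed $\alpha < 2$. Your proposal faithfully reproduces the paper's proof of that theorem (indeed essentially verbatim: inflate $a$ spine-vertices of the rigid gadget $Q_{\max T}$, count $a$-sets via distance multisets, control overcounting by the spacing $\{3+5i\}$ and the constant $\binom{\binom{a}{2}}{a-1}$, and conclude $|V(H)|^a \geq \Omega(n^{2a-1})$). So as a proof of Theorem~\ref{lem:univlowerbound2} your outline is correct and matches the paper's approach.

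However, it does \emph{not} prove the conjecture. Your final clause --- ``Since this holds for every $\alpha < 2$, the theorem follows'' --- is fine if ``the theorem'' means Theorem~\ref{lem:univlowerbound2}, but it is a genuine gap if you intend it to yield $\Omega(n^2)$. The implied constant in $|V(H)| \geq c_a\, n^{2-1/a}$ depends on $a$ (via the $a$-th root of the overcounting bound, the $a$-dependent gadget parameters, etc.), and there is no reason to expect a uniform $c > 0$ valid for all $a$. A function such as $n^2/\log n$ is $\Omega(n^\alpha)$ for every $\alpha < 2$ yet is not $\Omega(n^2)$, so one cannot pass to the limit $a \to \infty$. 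Upgrading the almost-quadratic bound to a true quadratic one would require a genuinely new idea beyond the $a$-set counting scheme, which is exactly why the paper records $\Omega(n^2)$ as a conjecture rather than a theorem.
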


Establishing the optimal constant would then be a problem analogous to the study of superpatterns from the world of permutations 
(see, for instance, \cite{superpattern321free, superpattern}\footnote{We remark that the study of superpatterns is usually done 
in the universe of {\it all} permutations, while in this paper we restrict ourselves to {\it bipartite} permutation graphs - 
this is the reason behind the apparent discrepancy between the upper bound from \cite{superpattern321free} and our lower bound from Lemma~\ref{lem:univlowerbound2}.}).



\begin{thebibliography}{99}
	
\bibitem{cographs}
B. Alecu, V. Lozin, D. de Werra,
The micro-world of cographs,
{\it Lecture Notes in Computer Science} 12126 (2020) 30--42.
	
	
\bibitem{Ale03}
V.E. Alekseev,
On easy and hard hereditary classes of graphs with respect to the independent set problem,
{\it Discrete Applied Mathematics}, 132 (2003) 17--26.

\bibitem{AKL04}
V.E. Alekseev, D.V. Korobitsyn,  V.V. Lozin,
Boundary classes of graphs for the dominating set problem,
{\it Discrete Mathematics}, 285 (2004) 1--6.

\bibitem{ABKL07}
V.E. Alekseev, R. Boliac, D.V. Korobitsyn, V.V. Lozin,
NP-hard graph problems and boundary classes of graphs,
{\it  Theoret. Comput. Sci}. 389 (2007) 219--236.


\bibitem{harmonious}	
K. Asdre, S.D. Nikolopoulos,
NP-completeness results for some problems on subclasses of bipartite and chordal graphs,
{\it Theoret. Comput. Sci.} 381 (2007) 248--259.

\bibitem{finite}
A. Atminas, R. Brignall, Well-quasi-ordering and finite distinguishing number,
{\it J. Graph Theory}, 95 (2020) 5--26. 

\bibitem{Bell}	
A. Atminas, A. Collins, J. Foniok, and V. Lozin,
Deciding the Bell number for hereditary graph properties,
{\it SIAM J. Discrete Mathematics}, 30 (2016) 1015--1031.	
	
\bibitem{Razgon}
A. Atminas, V.V. Lozin, and I. Razgon,
Linear time algorithm for computing a small biclique in graphs without long induced paths.
{\it Lecture Notes in Computer Science}, 7357 (2012) 142--152.




\bibitem{SpHerProp}
J. Balogh, B. Bollob\'as, D. Weinreich, 
The speed of hereditary properties of graphs. 
{\it J. Combin. Theory Ser. B} 79 (2000), no. 2, 131--156.

\bibitem{Jump}
J. Balogh, B. Bollob\'as, D. Weinreich, 
A jump to the Bell number for hereditary graph properties.
{\it J. Combin. Theory Ser. B}  95  (2005),  no. 1, 29--48.



 \bibitem{superpattern321free}
 M. J. Bannister, W. E. Devanny, D. Eppstein, Small Superpatterns for Dominance Drawing, {\it 2014 Proceedings of the Meeting on Analytic Algorithmics and Combinatorics (ANALCO)} (2014)  92--103.
	
\bibitem{ISI}
R. Belmonte, P. Heggernes, P. van 't Hof,
Edge contractions in subclasses of chordal graphs,
{\it Discrete Appl. Math}. 160 (2012) 999--1010.
	


\bibitem{Parikh}
S. Bera, K. Mahalingam,
Structural properties of word representable graphs,
{\it Math. Comput. Sci.} 10 (2016) 209--222.



\bibitem{cw-bound}
B. Courcelle, S. Olariu,
Upper bounds to the clique width of graphs,
{\it Discrete Applied Mathematics} 101 (2000) 77--114.



\bibitem{D12}
D. Dadush,
Integer programming, lattice algorithms, and deterministic volume estimation.
ProQuest LLC, Ann Arbor, MI. Thesis (Ph.D.), Georgia Institute of Technology

\bibitem{rank-depth}
M. DeVos, O. Kwon, and S. Oum,
Branch-depth: Generalizing tree-depth of graphs.
{\it arXiv:1903.11988, 2019}.

\bibitem{induced-minor}
G. Ding, Chordal graphs, interval graphs, and wqo. {\it J. Graph Theory} 28 (1998) 105--114. 

\bibitem{Haiko}
M. Dyer, H. M\"uller, Quasi-monotone graphs,
{\it Discrete Applied Mathematics} 271 (2019)  25--48.

\bibitem{superpattern}
M. Engen, V. Vatter, Containing all permutations. Preprint available at \url{https://arxiv.org/abs/1810.08252v4} (2018).

\bibitem{pathlet}
R. Ferguson, On the lettericity of paths. Preprint available at \url{https://arxiv.org/abs/2007.03636v1} (2020).




\bibitem{shrub}
R. Ganian, P. Hlin\v{e}n\'{y}, J. Ne\v{s}et\v{r}il, J. Obdr\v{z}\'{a}lek, P. Ossona de Mendez,
Shrub-depth: capturing height of dense graphs,
{\it Logical Methods in Computer Science} 15 (2019) 7:1--7:25.
	
%
%
\bibitem{contiguity}
P. Goldberg, M. Golumbic, H. Kaplan, R. Shamir,
Four strikes against physical mapping of DNA,
{\it Journal of Computational Biology} 2 (1995) 139--152.

\bibitem{GM19}
D.V.~Gribanov, D.S. Malyshev,
Integer conic function minimization based on the comparison oracle,
International Conference on Mathematical Optimization Theory and Operations Research, 2019, 218--231.


\bibitem{isi}
P. Heggernes, P. van 't Hof, D. Meister, Y. Villanger,
Induced Subgraph Isomorphism on proper interval and bipartite permutation graphs,
{\it Theoretical Computer Science}, 562 (2015) 252--269.

\bibitem{Pavol}
P. Hell, J. Huang, Interval bigraphs and circular arc graphs, 
{\it J. Graph Theory} 46 (2004) 313--327.

\bibitem{K87}
R. Kannan,
Minkowski's convex body theorem and integer programming,
{\it Mathematics of Operations Research} 12:3 (1987) 415--440.


\bibitem{reconstruction}
M. Kiyomi, T. Saitoh, R. Uehara,
Bipartite permutation graphs are reconstructible,
{\it Discrete Math. Algorithms Appl.} 4 (2012), no. 3, 1250039, 14 pp.



\bibitem{KLMT11}
N. Korpelainen, V. V. Lozin, D. S. Malyshev, A. Tiskin,
Boundary properties of graphs for algorithmic graph problems,
{\it Theoretical Computer Science}, 412 (2011) 3545--3554.

\bibitem{boundary-wqo}
N. Korpelainen, V.Lozin, I. Razgon,
Boundary properties of well-quasi-ordered sets of graphs,
{\it Order}, 30 (2013) 723--735.




\bibitem{Lampis}
M. Lampis, Algorithmic meta-theorems for restrictions of treewidth.
{\it Algorithmica} 64 (2012), 19--37.


\bibitem{buffer}
T.-H. Lai, S.-S. Wei,
Bipartite permutation graphs with application to the minimum buffer size problem,
{\it Discrete Appl. Math.} 74 (1997) 33--55.


\bibitem{L83}
H. Lenstra,
Integer programming with a fixed number of variables,
{\it Mathematics of Operations Research} 8:4 (1983) 538--548.

\bibitem{parameters}
V. Lozin, Graph parameters and Ramsey theory.
{\it Lecture Notes in Computer Science} 10765 (2018), 185--194.

\bibitem{Loz08}
V.V. Lozin, Boundary classes of planar graphs,
{\it Combinatorics, Probability and Computing}, 17 (2008) 287--295.

\bibitem{Loz11}
V. Lozin,
Minimal classes of graphs of unbounded clique-width,
{\it Ann. Comb}. 15 (2011) 707--722.

\bibitem{canonical}
V. Lozin, C. Mayhill,
Canonical antichains of unit interval and bipartite permutation graphs,
{\it Order} 28 (2011) 513--522.

\bibitem{universal}
V. Lozin, G. Rudolf,
Minimal universal bipartite graphs.
{\it Ars Combin.}  84  (2007), 345--356.

\bibitem{weed}
C.L. Lu, C.Y. Tang,
Solving the weighted efficient edge domination problem on bipartite permutation graphs,
{\it Discrete Appl. Math.} 87 (1998) 203--211.

\bibitem{M13}
D. Malyshev,
A study of the boundary graph classes for colorability problems,
{\it Journal of Applied and Industrial Mathematics} 7:2 (2013) 221--228.

\bibitem{M14}
D. Malyshev,
Classes of graphs critical for the edge list-ranking problem,
{\it Journal of Applied and Industrial Mathematics} 8:2 (2014) 245--255.

\bibitem{M16}
D. Malyshev,
A complexity dichotomy and a new boundary class for the dominating set problem,
{\it Journal of Combinatorial Optimization} 32:1 (2016) 226--243.

\bibitem{MP16}
D. Malyshev, P.M. Pardalos,
Critical hereditary graph classes: a survey
{\it Optimization Letters} 10:8 (2016) 1593--1612.
	
\bibitem{tree-depth}	
J. Ne\v set\v ril and P. Ossona de Mendez,
On low tree-depth decompositions,
{\it Graphs Combin.} 31 (2015) 1941--1963.


\bibitem{Pet02}
M. Petkov\v sek, Letter graphs and well-quasi-order by induced subgraphs,
{\it Discrete Math}. 244 (2002) 375--388.

\bibitem{planar}
N. Robertson and P. Seymour, Graph minors. V. Excluding a planar graph. {\it J. Combin. Theory Ser. B} 41 (1986) 92--114.
	
\bibitem{minor-wqo}
N. Robertson and P. Seymour,
Graph Minors. XX. Wagner's conjecture,
{\it J. Combinatorial Theory B}, 92 (2004) 325--357.
	
\bibitem{localcomp}
S. Oum, Rank-width and vertex-minors. {\it J. Combin. Theory Ser. B} 95 (2005), no. 1, 79--100. 
	
\bibitem{BPG}
J. Spinrad, A. Brandst\"adt, L. Stewart,
Bipartite permutation graphs,
{\it  Discrete Appl. Math.} 18 (1987) 279--292.

\bibitem{Parikh-bpg}
Wen Chean Teh, Zhen Chuan Ng, Muhammad Javaid, Zi Jing Chern,
Parikh word representability of bipartite permutation graphs,
{\it Discrete Appl. Math.} 282 (2020) 208--221.


\bibitem{VGZC19}
S.I. Veselov, D.V. Gribanov, N.Y. Zolotykh, A.Y. Chirkov,
A polynomial algorithm for minimizing discrete conic functions in fixed dimension,
{\it Discrete Appl. Math.}  283 (2020) 11--19.

	
\end{thebibliography}
\end{document}